\DeclareMathOperator{\glob}{glob}
\DeclareMathOperator{\Hom}{Hom}
\DeclareMathOperator{\Gal}{Gal}
\DeclareMathOperator{\rank}{rank}
\DeclareMathOperator{\corank}{corank}
\DeclareMathOperator{\Sel}{Sel}
\DeclareMathOperator{\K}{\mathcal{K}}
\DeclareMathOperator{\cyc}{cyc}
\DeclareMathOperator{\Iw}{Iw}
\newtheorem{theorem}{Theorem}[section]
\newtheorem*{theorem*}{Theorem}
\newtheorem{lemma}[theorem]{Lemma}
\newtheorem{proposition}[theorem]{Proposition}
\newtheorem{defn}[theorem]{Definition}
\numberwithin{equation}{section}
\newtheorem{lthm}{Theorem} % theorems with letters (for intro)
\theoremstyle{remark}
\newtheorem{remark}[theorem]{Remark}
\newtheorem{example}[theorem]{Example}
\newcommand\EatDot[1]{}
\newcommand{\cF}{{\mathcal{F}}}
\newcommand{\cM}{{\mathcal{M}}}
\newcommand{\Image}{\mathrm{Image}}
\newcommand{\CC}{\mathbb{C}}
\newcommand{\QQ}{\mathbb{Q}}
\newcommand{\ZZ}{\mathbb{Z}}
\newcommand{\Qp}{\mathbb{Q}_p}
\newcommand{\Zp}{\mathbb{Z}_p}
\newcommand{\lb}{\llbracket}
\newcommand{\rb}{\rrbracket}
\newcommand{\absolute}[1]{\left\lvert#1\right\rvert}
\definecolor{Green}{rgb}{0.0, 0.5, 0.0}
\newcommand{\cO}{\mathcal{O}}
\newcommand{\Qpn}{L_{n,p}}
\newcommand{\Qcyc}{\QQ_{\cyc}}
\newcommand{\Q}{\mathbb{Q}}
\newcommand{\F}{\mathbb{F}}
\newcommand{\Z}{\mathbb{Z}}
\newcommand{\p}{\mathfrak{p}}
\newcommand{\Char}{\mathrm{Char}}
\newcommand{\of}{{\overline{f}}}
\title[Fine Selmer groups, Control Theorems, and Duality]{Control theorems for fine Selmer groups, and  duality of fine Selmer groups attached to modular forms}
\let\@wraptoccontribs\wraptoccontribs
\author[J.~Hatley]{Jeffrey Hatley}
\address[Hatley]{
Department of Mathematics\\
Union College\\
Bailey Hall 202\\
Schenectady, NY 12308\\
USA}
\email{hatleyj@union.edu}
\author[D.~Kundu]{Debanjana Kundu}
\address[Kundu]{Pacific Institute of Mathematical Sciences \\ University of British Columbia\\
4176-2207 Main Mall\\
Vancouver, BC, Canada V6T 1Z4}
\email{dkundu@math.ubc.ca}
\author[A.~Lei]{Antonio Lei}
\address[Lei]{D\'epartement de Math\'ematiques et de Statistique\\
Universit\'e Laval, Pavillion Alexandre-Vachon\\
1045 Avenue de la M\'edecine\\
Qu\'ebec, QC\\
Canada G1V 0A6}
\email{antonio.lei@mat.ulaval.ca}
\author[J.~Ray]{Jishnu Ray}
\address[Ray]{School of Mathematics, Tata Institute of Fundamental Research, Dr Homi Bhabha Road, Navy Nagar, Colaba, Mumbai, Maharashtra 400005}
\email{jishnu.ray@tifr.res.in; jishnuray1992@gmail.com}
\subjclass[2020]{11R23 (primary); 11F11, 11R18 (secondary) }
\keywords{Fine Selmer groups, control theorems, conjugate modular forms}
\begin{document}
\begin{abstract}Let $\mathcal{O}$ be the ring of integers of a finite extension of $\mathbb{Q}_p$.
We prove two control theorems for fine Selmer groups of general cofinitely generated modules over $\mathcal{O}$.
We apply these control theorems to compare the fine Selmer group attached to a modular form $f$ over the cyclotomic $\mathbb{Z}_p$-extension of $\mathbb{Q}$ to its counterpart attached to the conjugate modular form $\overline{f}$.
\end{abstract}

\maketitle

\section{Introduction}
\label{S: Intro}
Let $p$ be a fixed odd prime number.
Let $f$ be a normalized eigen-cuspform of level $N$ and weight $k\ge 2$.
We assume throughout that $p\nmid N$.
We write $\of$ for its conjugate modular form,
that is, the modular form whose Fourier coefficients are given by the complex conjugation of those of $f$.
We fix embeddings $\iota_\infty:\overline{\QQ}\hookrightarrow \CC$ and $\iota_p:\overline{\QQ}\hookrightarrow \CC_p$, which allow us to regard the Fourier coefficients of $f$ and $\of$ as elements of $\CC_p$.
Let $K/\Qp$ be a fixed finite extension that contains all these Fourier coefficients and write $\cO$ for its ring of integers.
Throughout,  $\varpi$ is a fixed uniformizer  of $\cO$.

For $g=f$ or $\of$, let $V_g$ denote the $K$-adic $G_\QQ$-representation attached to $g$ by Deligne \cite{deligne69}.
Our normalization is such that these representations have Hodge--Tate weights $0$ and $1-k$ at $p$, with the convention that the $p$-adic cyclotomic character has Hodge--Tate weight $1$.
Recall that the $K$-linear dual of $V_f$ is
\[
V_f^*:=\Hom_K(V_f,K)\cong V_\of(k-1),
\]
where $M(j)$ denotes the $j$-th Tate twist of a $G_\QQ$-module $M$ for $j\in\ZZ$.
Fix a Galois-stable $\cO$-lattice $T_f$ inside $V_f$ and define an $\cO$-lattice $T_\of$ inside $V_\of$ to be
\[
T_\of:=\Hom_\cO(T_f,\cO)(1-k).
\]
For $g=f$ or $\of$, we write $A_g=V_g/T_g$.

We write $\Qcyc$ for the cyclotomic $\Zp$-extension of $\QQ$ and let $\Gamma$ denote the Galois group $\Gal\left(\Qcyc/\QQ\right)$.
Suppose $L_n$ is the unique finite subextension of $\Qcyc$ with Galois group over $\QQ$ of order $p^n$.
Suppose $\Qpn$ is the completion of $L_n$ at the unique place above $p$.
The Iwasawa algebra $\Lambda=\cO\lb\Gamma\rb $ is defined to be $\varprojlim \cO[\Gamma/\Gamma_n]$, where $\Gamma_n= \Gamma^{p^n}$ and the connecting maps are projections.
After fixing a topological generator $\gamma$ of $\Gamma$, there is an isomorphism of rings $\Lambda\cong\cO\lb X\rb $, by sending $\gamma$ to $X+1$.
Given a $\Lambda$-module $M$, denote its Pontryagin dual by $M^\vee := \Hom_{\cO}\left( M, K/\cO\right)$.

We recall the notion of \emph{fine Selmer groups} (which is denoted by $\Sel_0$ in the present article) defined by Coates and Sujatha in \cite{CoatesSujatha_fineSelmer} (see also \cite{Wut-JAG}).
This is a subgroup of the classical Selmer group obtained by imposing stronger vanishing conditions at primes above $p$ (the precise definition is reviewed in \S\ref{sec:notation} below).
A deep result of Kato shows that the fine Selmer group over $\Qcyc$ is always cotorsion as a $\Lambda$-module regardless of whether $f$ is ordinary at $p$ or not, a fact that is not true for classical Selmer groups.
Based on results of Kato in \cite{Kato} and Perrin-Riou \cite{perrinriou00b}, it has been shown in \cite[Proposition~6.3]{lei11compositio} that classical Selmer groups are not $\Lambda$-cotorsion when $p$ is a good non-ordinary prime. See also \cite[Theorem 2.6]{CoatesSujatha_book}.
Fine Selmer groups can also be used to formulate an Iwasawa Main Conjecture for modular forms without $p$-adic $L$-functions \cite[Conjecture~12.10]{Kato}; this fact that can also be exploited while working with Coleman families of modular forms where the signed Selmer groups are unavailable.
A formulation of an Iwasawa Main Conjecture without $p$-adic $L$-functions for universal deformations is given in \cite[Section 5]{Nakamura_K}.

In this article, we shall prove two control theorems for fine Selmer groups.
The first one concerns passing between the fine Selmer group of the $\varpi^e$-torsion of a cofinitely generated $\cO$-module and the $\varpi^e$-torsion of the fine Selmer group of the whole module.

\begin{lthm}[{Theorem~\ref{thm:1}}]
\label{CT_1}
Let $e\geq 1$ be fixed integers and $M$ a cofinitely generated $\cO$-module equipped with a continuous $G_S(\QQ)$-action.
Let $\cF$ be a subfield of $\Qcyc$.
Then the natural map
\[
r: \Sel_0\left( M[\varpi^e]/\cF\right) \longrightarrow \Sel_0\left( M/\cF\right)[\varpi^e]
\]
has finite kernel and cokernel with order bounded independently of $e$.
\end{lthm}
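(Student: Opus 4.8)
The plan is to run the standard control-theorem argument: compare the defining exact sequences of $\Sel_0(M[\varpi^e]/\cF)$ and of $\Sel_0(M/\cF)[\varpi^e]$ and then reduce, via the snake lemma, to estimating certain $H^0$'s that do not grow with $e$.

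Fix a finite set $S$ of places of $\QQ$ containing $p$ and $\infty$ outside of which the action on $M$ is unramified, so that $\Sel_0(N/\cF)=\ker\bigl(H^1(G_S(\cF),N)\to\bigoplus_{w\in S(\cF)}H^1(\cF_w,N)\bigr)$ for every $G_S(\cF)$-module $N$. The inclusion $M[\varpi^e]\hookrightarrow M$, together with left exactness of $(-)[\varpi^e]$, induces a morphism from the exact sequence
\[
0\longrightarrow \Sel_0(M[\varpi^e]/\cF)\longrightarrow H^1(G_S(\cF),M[\varpi^e])\longrightarrow \bigoplus_{w}H^1(\cF_w,M[\varpi^e])
\]
to the exact sequence
\[
0\longrightarrow \Sel_0(M/\cF)[\varpi^e]\longrightarrow H^1(G_S(\cF),M)[\varpi^e]\longrightarrow \bigoplus_{w}H^1(\cF_w,M)[\varpi^e],
\]
with vertical maps $r$, $s$ and $t=\bigoplus_w t_w$, where $r$ is the map in the statement. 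Replacing the third terms by the images of the horizontal maps makes both rows short exact, and the snake lemma then yields $\#\ker r\le\#\ker s$ and $\#\coker r\le \#\ker t\cdot\#\coker s$. Thus it suffices to bound $\#\ker s$, $\#\coker s$ and $\#\ker t$ independently of $e$.

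I would control $s$ — and, by the same argument applied to a decomposition group, each $t_w$ — using the two short exact sequences of Galois modules
\[
0\to M[\varpi^e]\to M\xrightarrow{\ \varpi^e\ }\varpi^e M\to 0,\qquad 0\to \varpi^e M\hookrightarrow M\to M/\varpi^e M\to 0.
\]
Write $G=G_S(\cF)$. The long exact sequence of the first one identifies $\ker s$ with $(\varpi^e M)^{G}/\varpi^e M^{G}$, a subgroup of $M^{G}/\varpi^e M^{G}$; since $M$, hence its submodule $M^{G}$, is cofinitely generated over $\cO$, splitting off the maximal divisible submodule gives $\#\bigl(M^{G}/\varpi^e M^{G}\bigr)\le\#\bigl(M^{G}/M^{G}_{\mathrm{div}}\bigr)$, which is independent of $e$. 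For the cokernel, the same sequence, together with the fact that $H^1(G,M)\to H^1(G,\varpi^e M)\to H^1(G,M)$ is multiplication by $\varpi^e$, shows that $\coker s$ embeds into $\ker\bigl(H^1(G,\varpi^e M)\to H^1(G,M)\bigr)$; by the long exact sequence of the second displayed sequence this kernel is a quotient of $H^0(G,M/\varpi^e M)$, so $\#\coker s\le\#\bigl(M/\varpi^e M\bigr)\le\#\bigl(M/M_{\mathrm{div}}\bigr)$, again independent of $e$. Repeating with $G$ replaced by the decomposition group at a place $w$ gives $\#\ker t_w\le\#\bigl(M/M_{\mathrm{div}}\bigr)$ uniformly in $w$.

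It remains to note that $S(\cF)$ is finite. Since the closed subgroups of $\Gamma\cong\Zp$ are $\{1\}$ and the $\Gamma_n$, the field $\cF$ is either some $L_n$ or $\Qcyc$; in both cases the place above $p$ is unique (totally ramified), every non-archimedean place $v\ne p$ in $S$ has only finitely many places above it, and the archimedean places may be discarded since $p$ is odd. Hence $\#\ker t\le\#\bigl(M/M_{\mathrm{div}}\bigr)^{\#S(\cF)}$, and altogether $\#\ker r\le\#\bigl(M^{G_S(\cF)}/M^{G_S(\cF)}_{\mathrm{div}}\bigr)$ and $\#\coker r\le\#\bigl(M/M_{\mathrm{div}}\bigr)^{\#S(\cF)+1}$, finite and independent of $e$. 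The one delicate point is precisely this uniformity in $e$: it rests on $M$ being cofinitely generated over $\cO$, so that $M/\varpi^e M$ and $M^{G}/\varpi^e M^{G}$ stay bounded by the fixed finite groups $M/M_{\mathrm{div}}$ and $M^{G}/M^{G}_{\mathrm{div}}$ instead of growing with $e$; granting this, passing to the infinite layer $\cF=\Qcyc$ (direct limits of cohomology and finiteness of $S(\Qcyc)$) presents no further difficulty.
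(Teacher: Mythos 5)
Your argument follows the same snake-lemma comparison that the paper uses, bounding the outer kernel and cokernel by $H^0$-terms whose orders are controlled by cofinite generation over $\cO$. The only substantive difference is in the treatment of $\coker s$: the paper asserts that the middle vertical map is surjective, which it deduces from the short exact sequence $0\to M[\varpi^e]\to M\xrightarrow{\varpi^e}M\to 0$ and hence tacitly requires $M$ to be $\varpi$-divisible, whereas you bound $\coker s$ by splitting off $\varpi^e M$ and using the auxiliary sequence $0\to\varpi^e M\to M\to M/\varpi^e M\to 0$, obtaining $\#\coker s\le\#(M/M_{\mathrm{div}})$ for arbitrary cofinitely generated $M$. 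Since the modules to which the theorem is applied in the paper (namely $A_g(j)\otimes_\cO\Lambda/F^m$ and their $\varpi^e$-torsions) are divisible, the two proofs agree in practice; your version is the one that matches the stated hypotheses without the implicit divisibility assumption. The rest — the identification $\ker s\cong(\varpi^e M)^{G}/\varpi^e M^{G}\hookrightarrow M^{G}/\varpi^e M^{G}$, the analogous local bounds for the $t_w$, and the observation that $S(\cF)$ is finite since $\cF\subseteq\Qcyc$ — is exactly what the paper does.
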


The second control theorem concerns passing between fine Selmer groups over $\Qcyc$ and $L_n$.
It is divided into two parts.
The first part studies the fine Selmer groups of $\varpi^e$-torsion of a general cofinitely generated $\cO$-module.
The second part studies the fine Selmer groups attached to modular forms.
Let $F\in \Lambda$ be an irreducible polynomial.
Let $g=f$ or $\of$, and $j\in\ZZ$.
We write $A_g(j)_{F^m}$ for the tensor product $A_g(j)\otimes_\cO\Lambda/F^m$, equipped with the diagonal Galois action.

Before discussing a crucial hypothesis, which we call (H-base$_F$), we introduce some notation.
Let $\iota$ denote the involution on $\Lambda$ sending a group-like element of $\Gamma$ to its inverse. For any $\Lambda$-module $M$,  we write $M^\iota$
for the $\Lambda$-module which coincides with $M$ as a $\Z_p$-module, with the action of
$\Gamma$ given by
\[
\gamma \cdot_{\iota} x = \gamma^{-1}x
\textrm{ for }\gamma \in \Gamma \textrm{ and }x\in M.
\]

\vspace{0.2cm}

\noindent\textbf{Hypothesis (H-base$_F$):} For all  $v| N$, the cohomology group $H^0\left( \QQ_v, A_g(j)\otimes_{\cO} \Lambda/ \mathcal{F}\right)$  is finite for $(g,j,\mathcal{F})=(f,i,F)$ and $(\of,k-i,F^\iota)$.
\vspace{0.25cm}

\noindent We can now state our second control theorem:
\begin{lthm}[{Theorem~\ref{thm:2}}]
\label{CT_2}
Let $M$ be a cofinitely generated $\cO$-module equipped with a continuous action of $G_S(\QQ)$.
Let $n\geq 0$ be an integer.
\begin{itemize}
 \item[(i)]
Let $e\ge 1$ be an integer.
Then, the kernel and cokernel of the restriction map
\[
r_n: \Sel_0\left(M[\varpi^e]/L_n\right) \longrightarrow \Sel_0\left(M[\varpi^e]/\QQ_{\cyc}\right)^{\Gamma_n}
\]
are finite and bounded independently of $n$.
\item[(ii)] Let $F$ be a fixed irreducible distinguished polynomial and $m\geq 1$ an integer.
Suppose that $M=A_f(i)_{F^m}$ or $A_{\of}(k-i)_{F^{\iota,m}}$ and that (H-base$_F$) holds.
Then, the kernel and cokernel of the restriction map
\[
r: \Sel_0\left(M/\QQ\right) \longrightarrow \Sel_0\left(M/\QQ_{\cyc}\right)^{\Gamma}
\]
are finite.
\end{itemize}
\end{lthm}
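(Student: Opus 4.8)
The plan is to prove both parts by the standard control-theorem mechanism: form the fundamental commutative diagram relating the defining exact sequences of the two fine Selmer groups (taking $\Gamma_n$- or $\Gamma$-invariants on the cyclotomic side), apply the snake lemma, and then bound the resulting cohomology of $\Gamma_n$ (resp.\ $\Gamma$) together with the local error terms.

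\emph{Part (i).} Write $S(L_n)$ for the set of primes of $L_n$ above $S$. The rows of the diagram are $0\to\Sel_0(M[\varpi^e]/L_n)\to H^1(G_S(L_n),M[\varpi^e])\to\mathcal{J}_{L_n}\to 0$ and, after taking $\Gamma_n$-invariants, $0\to\Sel_0(M[\varpi^e]/\Qcyc)^{\Gamma_n}\to H^1(G_S(\Qcyc),M[\varpi^e])^{\Gamma_n}\to\mathcal{J}_{\Qcyc}^{\Gamma_n}$, where $\mathcal{J}_{L_n}$ and $\mathcal{J}_{\Qcyc}$ are the images of the respective localisation maps (cf.\ \S\ref{sec:notation}); the vertical maps are $r_n$, the restriction $h_n$ on global $H^1$, and the induced map $g_n$. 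Inflation--restriction for $1\to G_S(\Qcyc)\to G_S(L_n)\to\Gamma_n\to 1$ identifies $\ker h_n\cong H^1(\Gamma_n,H^0(G_S(\Qcyc),M[\varpi^e]))$ and embeds $\coker h_n$ into $H^2(\Gamma_n,H^0(G_S(\Qcyc),M[\varpi^e]))$, which vanishes since $\cd_p(\Gamma_n)=1$; hence $h_n$ is surjective. Shapiro's lemma, applied to the induced modules $\bigoplus_{w\mid v}H^1((\Qcyc)_w,M[\varpi^e])$, places $\ker g_n$ inside $\bigoplus_{v\in S(L_n)}H^1(\Gamma_{n,v},H^0((\Qcyc)_w,M[\varpi^e]))$, where $\Gamma_{n,v}$ is the decomposition subgroup of $\Gamma_n$ at a chosen prime $w$ above $v$ ($=\Gamma_n$ when $v\mid p$, by total ramification). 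The snake lemma now gives $\ker r_n\hookrightarrow\ker h_n$ and, since $\coker h_n=0$, realises $\coker r_n$ as a quotient of $\ker g_n$. Every cohomology group occurring here is taken over a procyclic group with coefficients in a submodule of the finite group $M[\varpi^e]$, hence has order at most $|M[\varpi^e]|$. Finally $|S(L_n)|$ is bounded independently of $n$: the prime $p$ is totally ramified in $\Qcyc/\QQ$ and contributes one prime; for each non-archimedean $v_0\in S$ with $v_0\neq p$ the Frobenius at $v_0$ has infinite order in $\Gamma$, so its decomposition group is open and the number of primes of $L_n$ above $v_0$ stays bounded; and the archimedean place is harmless as $p$ is odd. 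Hence $|\ker r_n|$ and $|\coker r_n|$ are bounded independently of $n$, proving (i).

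\emph{Part (ii).} We run the same machine with $(\QQ,\Qcyc,\Gamma)$ in place of $(L_n,\Qcyc,\Gamma_n)$, treating $M=A_f(i)_{F^m}$; the case $M=A_{\of}(k-i)_{F^{\iota,m}}$ is identical after replacing $(f,i,F)$ by $(\of,k-i,F^{\iota})$. As before $\coker(\mathrm{res})=0$ on global $H^1$ (as $\cd_p(\Gamma)=1$), and $\ker(\mathrm{res})\cong H^1(\Gamma,H^0(G_S(\Qcyc),M))$. Since $G_S(\Qcyc)$ acts trivially on $\Lambda/F^m$, $H^0(G_S(\Qcyc),M)=H^0(G_S(\Qcyc),A_f(i))\otimes_\cO\Lambda/F^m$, and $H^0(G_S(\Qcyc),A_f(i))$ is finite because the irreducible representation $V_f$ admits no $G_S(\Qcyc)$-fixed line; as $\Lambda/F^m$ is a finitely generated $\cO$-module ($F$ being distinguished), $H^0(G_S(\Qcyc),M)$ is finite and so $\ker(\mathrm{res})$ is finite. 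For the local terms, the snake lemma reduces us to showing $\ker g\subseteq\bigoplus_{v\in S}H^1(\Gamma_v,H^0((\Qcyc)_w,M))$ is finite. At $v=p$, where $\Gamma_p=\Gamma$, we have $H^0((\Qcyc)_p,M)=A_f(i)^{G_{(\Qcyc)_p}}\otimes_\cO\Lambda/F^m$, and $A_f(i)^{G_{(\Qcyc)_p}}$ is finite: $f$ having good reduction at $p$ and weight $\geq 2$, the crystalline representation $V_f|_{G_{\QQ_p}}$ has no subrepresentation isomorphic to an integral Tate twist of the trivial character (local irreducibility when $f$ is non-ordinary; the Ramanujan bound together with the Hodge--Tate weights otherwise), so $V_f(i)^{G_{(\Qcyc)_p}}=0$ and $H^1(\Gamma,H^0((\Qcyc)_p,M))$ is finite. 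The archimedean place is again harmless. At $v\mid N$: taking invariants in stages, $H^0((\Qcyc)_w,M)^{\Gamma_v}=H^0(\QQ_v,M)$, and as $\Gamma_v\cong\Zp$ and the finiteness of $H^0(\Gamma_v,-)$ is equivalent to that of $H^1(\Gamma_v,-)$ for cofinitely generated $\cO$-modules, it suffices to show $H^0(\QQ_v,A_f(i)\otimes_\cO\Lambda/F^m)$ is finite; filtering $\Lambda/F^m$ by the submodules $F^j\Lambda/F^m\Lambda$, whose successive quotients $F^j\Lambda/F^{j+1}\Lambda\cong\Lambda/F$ are $\cO$-free ($F$ distinguished), a d\'evissage reduces this to the finiteness of $H^0(\QQ_v,A_f(i)\otimes_\cO\Lambda/F)$ --- which is precisely Hypothesis (H-base$_F$). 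Thus $\ker g$ is finite, and the snake lemma shows $\ker r$ and $\coker r$ are finite, proving (ii).

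\emph{The main obstacle} is the control of the local error terms at the ramified primes $v\mid N$: there the decomposition group in $\Gamma$ can be arbitrary, $A_f(i)$ need not have good reduction, and the local invariants of the $\Lambda/F^m$-thickened module are genuinely subtle --- for a general cofinitely generated $M$ they need not even be finite, which is exactly why part (ii) is restricted to the modular modules and invokes (H-base$_F$), whereas part (i), whose coefficient module is finite, needs no such hypothesis. The other point worth highlighting is the d\'evissage along powers of $F$, which is what lets (H-base$_F$), stated only for $F$, control all of the modules $A_g(j)_{F^m}$ simultaneously. Everything else --- the global comparison, the vanishing of $\coker(\mathrm{res})$, and the local analysis at $p$ and $\infty$ --- is routine once the fundamental diagram is in place.
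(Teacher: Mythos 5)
Your overall architecture matches the paper's: the fundamental commutative diagram for $\Sel_0$ with vertical restriction maps, surjectivity of the restriction on global $H^1$ from $\cd_p(\Gamma)=1$, identification of the kernels via inflation–restriction as $H^1(\Gamma_n,\cM(\Qcyc))$ and $\bigoplus_v H^1(\Gamma_{n,v},\cM(\QQ_{\cyc,v}))$, the snake lemma, the trivial finite-coefficient bound for part~(i), and the d\'evissage along $F^j\Lambda/F^m\Lambda$ to reduce the $v\mid N$ terms in part~(ii) to (H-base$_F$) plus a Herbrand-quotient argument to pass from $H^0$ to $H^1$. All of this is how the paper proceeds (via its Lemmas~\ref{lemma: Imai-like}, \ref{lem:finiteness} and \ref{lemma Lim-like}). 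The one genuinely different step is the local term at $p$ in part~(ii). The paper proves $H^0\left(\Qp(\mu_{p^\infty}),A_f\right)$ is finite by invoking local Tate duality and Kato's Theorem~12.5(3) — the statement that the localisation of $H^2_{\Iw}\left(\Qp(\mu_{p^\infty}),T_\of(k)\right)$ away from $(\varpi)$ vanishes — and then squeezing out pseudo-nullity from the $\cO$-corank bound. You instead argue directly that $V_f(i)^{G_{(\Qcyc)_p}}=0$ using local representation theory: crystallinity of $V_f|_{G_{\Qp}}$ (from $p\nmid N$), the classification of crystalline $G_{\Qp}$-characters as unramified twists of $\epsilon_p^n$, local irreducibility in the supersingular case, and the Weil/Ramanujan bound in the ordinary case. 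That route is correct and more elementary, avoiding Kato's Euler-system input entirely — though as written it compresses the logical chain (an invariant line gives a crystalline subcharacter $\mu\epsilon_p^n$; total ramification of $(\Qcyc)_p/\Qp$ forces $\mu=1$; hence the subcharacter of $V_f$ is an integral Tate twist of the trivial character), and you should spell out that intermediate reduction before asserting "$\ldots$ so $V_f(i)^{G_{(\Qcyc)_p}}=0$." What the paper's route buys is uniformity (it only uses that $p\nmid N$ and the structure of $A_f$ as an $\cO$-module, with no case split on ordinarity); what yours buys is independence from Kato's machinery.
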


Our results are slightly more general than those proven by Rubin in \cite[Proposition 7.4.4]{rubin14} (see also \cite{wuthrich2004fine, lim2020control} for similar control theorems for fine Selmer groups of abelian varieties).
We utilize our control theorems to study the $\Lambda$-structure of $\Sel_0\left(A_g(j)/\Qcyc\right)^\vee$ under duality.
More precisely, we study links between $U:=\Sel_0\left(A_f(i)/\Qcyc\right)^\vee$ and $V:=\Sel_0\left(A_\of(k-i)/\Qcyc\right)^{\vee,\iota}$.
The twists we consider originate from the perfect pairing of $G_\QQ$-modules
\[
T_f(i)\times T_\of(k-i)\longrightarrow \cO(1).
\]

For any finitely generated $\Lambda$-torsion module $M$, we denote its $F^\infty$-torsion (resp. $\varpi^\infty$-torsion) part appearing in the pseudo-isomorphism between $M$ and cyclic $\Lambda$-modules by $M(F^\infty)$ (resp. $M(\varpi^\infty)$)
(see \S\ref{sec:notation}).
In this article, we provide necessary and sufficient conditions for the equalities $U(F^\infty)=V(F^\infty)$ and $U(\varpi^\infty)=V(\varpi^\infty)$ in terms of growth conditions of the following localization maps:
\begin{align*}
\theta_{F,m,e}:H^1\left(G_S(\QQ),A_f(i)\otimes_{\mathcal{O}}{\Lambda/F^m}[\varpi^e]\right) &\longrightarrow H^1\left(\Qp,A_f(i)\otimes_{\mathcal{O}}{\Lambda/F^m}[\varpi^e]\right),\\
\theta_{n,e}: H^1\left(G_S(L_n),A_f(i)[\varpi^e] \right) &\longrightarrow H^1\left(\Qpn,A_f(i)[\varpi^e]\right).
\end{align*}

%It is easy to see (cf. \S \ref{sec:notation}) that (H-base$_F$) is satisfied if $H^0\left( \QQ_{\cyc,v}, A_g(j)\right)$ is finite for both $(g,j)=(f,i)$ and $(\of,k-i)$.

Our control theorems allow us to prove:

\begin{lthm}
\label{main corollary}
Under the notation introduced above, we have:
\begin{enumerate}[(i)]
    \item \label{thm_A}
    Let $F\in \Lambda$ be an irreducible distinguished polynomial such that (H-base$_F$) holds.
    Then, $U(F^\infty)=V(F^\infty)$ if and only if
    \[\absolute{\Image(\theta_{F,m,e})} \sim_e q^{e \deg(F^m)} \text{ for all integers $m\ge1$.}\]

    \item \label{thm_B}
   We have $ U(\varpi^\infty)=V(\varpi^\infty)$  if and only if \[ \absolute{\Image\left(\theta_{n,e}\right)} \sim_n q^{ep^n} \textrm{ for all integers $e\ge1$.}\]
     In particular, $U$ and $V$ have equal $\mu$-invariants if  $\absolute{\Image\left(\theta_{n,e}\right)} \sim_n q^{ep^n}$ for all $e\geq 1$.
\end{enumerate}
\end{lthm}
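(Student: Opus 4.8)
The plan is to reduce each equivalence to a statement about sizes of fine Selmer groups of torsion quotients, then feed those into the control theorems.

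For part (i), I would start from the duality principle linking $U$ and $V$: since $T_f(i)$ and $T_\of(k-i)$ are $\cO$-dual up to a Tate twist by $\cO(1)$, local Tate/Poitou--Tate duality over $\Qcyc$ gives a pairing comparing $\Sel_0(A_f(i)/\Qcyc)^\vee$ and $\Sel_0(A_\of(k-i)/\Qcyc)^{\vee,\iota}$ away from the ``fine'' local conditions at $p$; the defect is measured precisely by the images of the localization maps $\theta$. Concretely, for each $m$ I would show that the equality $U/(F^m)=V/(F^m)$ (as $\Lambda$-modules up to bounded error) is equivalent to $U(F^\infty)=V(F^\infty)$, using the structure theory of finitely generated $\Lambda$-torsion modules: two such modules have the same $F^\infty$-part iff their quotients by $F^m$ have sizes agreeing up to a bounded factor as $m\to\infty$ (the free part contributes $q^{m\deg F}$ to each, so it cancels, and the $\varpi^\infty$-part contributes a bounded amount). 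Then I would express $\lvert U/(F^m)[\varpi^e]\rvert$ and the analogous quantity for $V$ in terms of $\lvert\Sel_0(A_f(i)_{F^m}[\varpi^e]/\Qcyc)\rvert$ via Theorem~\ref{CT_1} (control in the $\varpi^e$-direction) and a standard snake-lemma comparison of $\Sel_0$ for $A_f(i)_{F^m}$ versus the $F^m$-torsion of $\Sel_0(A_f(i))$, the error terms being governed exactly by $H^1(\Qp,-)$-contributions, i.e. by $\Image(\theta_{F,m,e})$. Hypothesis (H-base$_F$) is what guarantees the relevant local $H^0$'s at bad primes are finite, so those error terms stay bounded and do not interfere. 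Unwinding, $U(F^\infty)=V(F^\infty)$ becomes exactly the assertion that $\lvert\Image(\theta_{F,m,e})\rvert\sim_e q^{e\deg(F^m)}$ for all $m$.

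For part (ii), the structure is parallel but in the ``other'' variable: instead of quotienting by $F^m$ I work layer by layer, comparing $\Sel_0$ over $L_n$ with $\Sel_0(-/\Qcyc)^{\Gamma_n}$ via Theorem~\ref{CT_2}(i), and comparing $\Sel_0(A_f(i)[\varpi^e]/L_n)$ with the $\varpi^e$-torsion of $\Sel_0(A_f(i)/L_n)$. The $\mu$-invariant of a finitely generated torsion $\Lambda$-module $M$ is detected by the growth of $\lvert M^\vee[\varpi^e]\text{ mod }\Gamma_n\rvert$ as $n\to\infty$: the $\varpi^\infty$-part $M(\varpi^\infty)$ contributes $q^{(\text{something})p^n}$ while the rest contributes polynomially in $p^n$ times a bounded factor. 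Matching the $\varpi^\infty$-parts of $U$ and $V$ is then equivalent to matching these growth rates for all $e$, and again Poitou--Tate duality turns the discrepancy into the image of $\theta_{n,e}$, whose ``expected'' (maximal) size is $q^{ep^n}$ (the local $H^1$ at $\Qpn$ of $A_f(i)[\varpi^e]$ has order $q^{ep^n\cdot\dim}$, and here the relevant rank is $1$ by the Hodge--Tate weight normalization). Hence $U(\varpi^\infty)=V(\varpi^\infty)$ iff $\lvert\Image(\theta_{n,e})\rvert\sim_n q^{ep^n}$ for every $e$. The final sentence follows because $U(\varpi^\infty)=V(\varpi^\infty)$ forces $\mu(U)=\mu(V)$, as the $\mu$-invariant only depends on the $\varpi^\infty$-part.

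The main obstacle will be the bookkeeping of error terms: one must check that all the ``bounded independently of $e$ (resp. $n$)'' constants coming from Theorem~\ref{CT_1}, Theorem~\ref{CT_2}, and the snake-lemma comparisons between $\Sel_0$ of a quotient module and the corresponding torsion of $\Sel_0$, genuinely stay bounded and therefore wash out when one takes the asymptotic relations $\sim_e$ and $\sim_n$. This is where (H-base$_F$) is essential, and where care is needed to ensure the passage between the cohomological image $\Image(\theta)$ and the Selmer-group defect is exact up to such bounded factors rather than merely up to factors growing with $e$ or $n$.
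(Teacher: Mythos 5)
Your proposal has the right architecture: Poitou--Tate global duality plus the two control theorems reduce the question to finite-layer size comparisons, (H-base$_F$) absorbs the local error terms at bad primes, and a Greenberg-type criterion (Proposition~\ref{prop:greenberg}) translates the $\sim_e$ / $\sim_n$ conditions back into equalities of $F^\infty$- and $\varpi^\infty$-parts. This is essentially the paper's strategy (Lemmas~\ref{lem:quotient}--\ref{lem:quotientB} together with Theorems~\ref{thm:1} and~\ref{thm:2}).

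However, there is a genuine gap in where you locate the constants $q^{e\deg F^m}$ and $q^{ep^n}$. For part (ii) you attribute $q^{ep^n}$ to the order of $H^1(\Qpn, A_f(i)[\varpi^e])$ together with a ``relevant rank $1$ by the Hodge--Tate weight normalization''; but that local $H^1$ has order $\approx q^{2ep^n}$ (local Euler characteristic for a rank-two $\cO/\varpi^e$-module over an extension of degree $p^n$), and Hodge--Tate weights play no role at all in the argument. The constant in fact comes from the \emph{global} Euler characteristic formula via the archimedean places: $\chi_{\glob}(L_n,\cM)=1/\absolute{\cM^-}$, where $\cM^-$ is the one-dimensional $(-1)$-eigenspace for complex conjugation, and $L_n$ is totally real with $p^n$ real places, giving $1/q^{ep^n}$ (and similarly $1/q^{e\deg F^m}$ in part (i), which your sketch omits entirely). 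Combining this with Lemma~\ref{lem:quotient} yields $\absolute{K_1^\dagger}/\absolute{K_1}\sim\absolute{\Image(\theta)}\cdot\chi_{\glob}$, which is what makes the equivalence precise. Separately, your formulation of the $F^\infty$-criterion (``quotients by $F^m$ agree up to bounded factor as $m\to\infty$'') is off: Proposition~\ref{prop:greenberg}(1) requires equality of $\corank_{\Zp}((U^\vee)_{F^m})^\Gamma$ and the analogous quantity for $V$ for \emph{each} $m$, not a limit in $m$, and one must work with coranks rather than cardinalities since the $\varpi^\infty$-part makes $U/(F^m)$ infinite.
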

Here, $a_e\sim_e b_e$ signify that $a_e$ and $b_e$ are positive integers such that $a_e/b_e$ and $b_e/a_e$ are bounded independently of $e$ (but the bounds may depend on $m$).
Likewise, $a_n\sim_n b_n$ signify that $a_n$ and $b_n$ are positive integers such that $a_n/b_n$ and $b_n/a_n$ are bounded independently of $n$ (but the bounds may depend on $e$).
Apart from our control theorems, the proof of Theorem~\ref{main corollary} relies on global duality and global Euler characteristic formulae that the growth conditions on the localization maps are equivalent to criteria established by Greenberg (see Proposition~\ref{prop:greenberg})

\begin{remark}
We remind the reader that \cite[Conjecture~A]{jhasuj}, which generalizes \cite[Conjecture~A]{CoatesSujatha_fineSelmer} (see also \cite[Conjecture~1.2]{aribam2014mu}), predicts that $\mu$-invariants of $\Sel_0\left(A_\of(k-i)/\Qcyc\right)^\vee$ and $\Sel_0\left(A_f(i)/\Qcyc\right)^\vee$ are always zero.
Theorem~\ref{main corollary}\eqref{thm_B} asserts that if the condition $\absolute{\Image\left(\theta_{n,e}\right)} \sim_n q^{ep^n}$  holds, then the $\mu$-invariant of $\Sel_0\left(A_\of(k-i)/\Qcyc\right)^\vee$ vanishes if and only if that of $\Sel_0\left(A_f(i)/\Qcyc\right)^\vee$ vanishes.
Conversely, if the $\mu$-invariants of these fine Selmer groups are zero, then the growth condition $\absolute{\Image\left(\theta_{n,e}\right)} \sim_n q^{ep^n}$  holds.
\end{remark}

%The strategy of proving Theorem~\ref{main corollary} is as follows:
%\begin{itemize}
%\item In \S\ref{section: control}, we prove control theorems satisfied by the fine Selmer groups of modular forms.

%\item We show via global duality and global Euler characteristic formulae that the growth conditions on the localization maps are equivalent to criteria established by Greenberg (see Proposition~\ref{prop:greenberg}) that allow us to compare the $F^\infty$-torsion and $\varpi^\infty$-torsion of finitely generated torsion $\Lambda$-modules.
%\end{itemize}
The following theorem gives sufficient conditions for ($\textrm{H-base}_F$) to hold.
\begin{lthm}[Theorem~\ref{prop:main}]
Let $p$ be an odd prime and $N$ a square-free integer coprime to $p$.
Let $f \in S_{k}(\Gamma_0(N),\omega)$ be a newform with nebentypus character $\omega$ of conductor $M$.
Let $0 \leq i \leq k$ be an integer.
For a rational prime $\ell$, let $m_\ell$ denote the order of $\ell$ in $(\ZZ/p\ZZ)^\times$.
Suppose that for each $\ell | N$ the following hold:
\begin{enumerate}[(i)]
\item $\ell \not\equiv 1 \mod p$,
\item if $\ell | M$, then $m_\ell$ does not divide $(1-k+i)$ or $(1-i)$, and
\item if $\ell | \frac{N}{M}$, then $\gcd(m_\ell, \phi(M))=1$ and $m_\ell$ does not divide $k$ or $(k-2)$.
\end{enumerate}
\noindent Then, for all primes $v$ of $\Qcyc$ that divide $N$, we have $H^0\left( \QQ_{\cyc,v}, A_g(j)\right)$ is finite for both $(g,j)=(f,i)$ and $(\of,k-i)$.
\end{lthm}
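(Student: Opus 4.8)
\emph{Proof proposal.} The plan is to reduce the finiteness statement, prime by prime, to the non‑vanishing of a single Frobenius eigenvalue modulo $\varpi$. Since the primes of $\Qcyc$ above a fixed $\ell\mid N$ form one $\Gamma$‑orbit, it suffices to treat one place $v\mid\ell$ for each $\ell\mid N$; as $\ell\nmid p$, the extension $\QQ_{\cyc,v}/\QQ_\ell$ is unramified. The first reduction is formal: $H^0(\QQ_{\cyc,v},A_g(j))$ is a cofinitely generated $\cO$‑module, its Pontryagin dual is the module of $G_{\QQ_{\cyc,v}}$‑coinvariants of $T_g(j)^{*}:=\Hom_\cO(T_g(j),\cO)$, and the latter has $\cO$‑rank equal to $\dim_K V_g(j)^{G_{\QQ_{\cyc,v}}}$; hence $H^0(\QQ_{\cyc,v},A_g(j))$ is finite if and only if $V_g(j)^{G_{\QQ_{\cyc,v}}}=0$.

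Next I would describe $G_{\QQ_{\cyc,v}}$: it contains the inertia group $I_\ell$, and $\Gal(\QQ_{\cyc,v}/\QQ_\ell)\cong\Zp$ is generated by a Frobenius, so $G_{\QQ_{\cyc,v}}/I_\ell$ is the prime‑to‑$p$ part of $\widehat{\ZZ}=\Gal(\overline{\F}_\ell/\F_\ell)$. Because $f$ has level $N$ and is new at $\ell\mid N$, the representation $\rho_g$ is ramified at $\ell$, so $V_g(j)^{I_\ell}=V_g^{I_\ell}(j)$ is a single line, on which (geometric) Frobenius acts by $\beta_{g}\,\ell^{-j}$ for some $\beta_g\in\cO^{\times}$, the eigenvalue on the unramified line $V_g^{I_\ell}$ (a $p$‑adic unit, being a Weil number away from $p$). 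Since the pro‑$p$ part of Frobenius already lies in $G_{\QQ_{\cyc,v}}$ and acts trivially on this line while the prime‑to‑$p$ part acts through the cyclic group generated by the reduction of $\beta_g\ell^{-j}$, one gets $V_g(j)^{G_{\QQ_{\cyc,v}}}\ne0$ exactly when $\beta_g\ell^{-j}$ is a $1$‑unit, i.e. when $\overline{\beta_g}=\ell^{\,j}$ in the residue field $\F_q$ of $\cO$. Thus the theorem is equivalent to the two inequalities $\overline{\beta_f}\ne\ell^{\,i}$ and $\overline{\beta_{\of}}\ne\ell^{\,k-i}$.

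The remaining step is the explicit computation of $\overline{\beta_g}$ by local--global compatibility (Langlands--Deligne--Carayol) together with the determinant $\det\rho_g=\omega_g^{-1}\epsilon^{1-k}$, treating the cases $\ell\mid M$ and $\ell\mid N/M$ separately; as $N$ is square‑free these are precisely the ramified‑principal‑series and the twisted‑Steinberg cases. Hypothesis (i) keeps this computation non‑degenerate modulo $\varpi$: in the principal series case the ramified summand of $\overline{\rho}_g|_{G_{\QQ_\ell}}$ has order dividing $\ell-1$, hence prime to $p$ and still nontrivial on $I_\ell$; in the Steinberg case the monodromy class remains ramified because $H^1_{\mathrm{ur}}(\QQ_\ell,\F_q(\overline{\epsilon}))=0$ when $\ell\not\equiv1\bmod p$; so $\overline{\rho}_g(j)^{I_\ell}$ stays one‑dimensional, spanned by the reduction of $V_g^{I_\ell}(j)$. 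For $\ell\mid M$ the determinant together with the local shape of a newform new at $\ell$ shows that $\overline{\beta_g}\ne\ell^{\,j}$ amounts to $m_\ell\nmid(1-k+i)$ for $(f,i)$ and to $m_\ell\nmid(1-i)$ for $(\of,k-i)$ --- the two clauses of (ii), exchanged by the twisted duality $V_\of(k-i)\cong V_f(i)^{*}(1)$. For $\ell\mid N/M$, the relation $a_\ell(g)^2=\omega_g(\ell)\ell^{k-2}$ makes $\beta_g^{2}$ equal to $\omega_g(\ell)$ times a fixed power of $\ell$; the hypothesis $\gcd(m_\ell,\phi(M))=1$ forces the root of unity $\overline{\omega_g(\ell)}$ out of the identity, and what remains of $\overline{\beta_g}\ne\ell^{\,j}$ is exactly $m_\ell\nmid k$ and $m_\ell\nmid(k-2)$, namely (iii).

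I expect essentially all of the effort to lie in this last step. The first two paragraphs are routine, but pinning down $\overline{\beta_g}$ requires reconciling the paper's normalisation of $V_g$ with Deligne's motivic one, keeping track of arithmetic versus geometric Frobenius, and using the precise form of $\rho_g|_{G_{\QQ_\ell}}$ furnished by local--global compatibility in each ramified case; it is in matching those computations to the numerical conditions (ii) and (iii) that the hypotheses are genuinely used.
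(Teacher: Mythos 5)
The proposal follows the same strategy as the paper's \S\ref{appendix}: reformulate the finiteness statement as the non\nobreakdash-vanishing of suitable characters over $\QQ_\ell^\infty$, classify $\rho_g|_{G_\ell}$ via local Langlands (ramified principal series if $\ell\mid M$, twisted Steinberg if $\ell\mid N/M$, supercuspidals being excluded by squarefree level), then use the determinant together with the observation that $\epsilon_p|_{G_\ell^\infty}$ has order $m_\ell$ to translate triviality into divisibility conditions on $m_\ell$. The one genuine structural difference is in the initial reduction: the paper passes to the semisimplification $\rho_g|^{\mathrm{ss}}_{G_\ell}$ and requires that \emph{both} diagonal characters be nontrivial on $G_\ell^\infty$, whereas you first pass to the one-dimensional space $V_g^{I_\ell}$ (which contains $V_g(j)^{G_{\QQ_{\cyc,v}}}$ since $I_\ell\subset G_{\QQ_{\cyc,v}}$) and reduce everything to a single condition on the mod-$\varpi$ reduction of the Frobenius eigenvalue $\beta_g$ on that line. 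This is a slightly sharper reduction, and it correctly packages the order-of-$\ell$-mod-$p$ condition as a Teichm\"uller argument. The paper's Lemma~\ref{lem:cyclotomic-char-not-trivial} plays the same role on the other side.

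Two small corrections and one caveat. First, the sentence ``the pro-$p$ part of Frobenius already lies in $G_{\QQ_{\cyc,v}}$'' is reversed: $G_{\QQ_{\cyc,v}}/I_\ell$ is the \emph{prime-to-$p$} part $\prod_{q\neq p}\ZZ_q$ of $\widehat{\ZZ}=G_\ell/I_\ell$, while the pro-$p$ part is the quotient $\Gal(\QQ_\ell^\infty/\QQ_\ell)\cong\Zp$ that gets killed. Your conclusion (triviality of the restricted character $\Longleftrightarrow\ \beta_g\ell^{-j}$ is a 1-unit) is nevertheless correct, precisely because a continuous character of $\prod_{q\neq p}\ZZ_q$ into $\cO^\times$ must land in $\mu_{q-1}$. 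Second, the aside about $H^1_{\mathrm{ur}}(\QQ_\ell,\F_q(\overline{\epsilon}))=0$ controlling the ramification of the reduced Steinberg extension class is not needed: you only use the Frobenius eigenvalue on the characteristic-zero line $V_g^{I_\ell}$, not any property of $\overline{\rho}_g$. Finally, the last paragraph is only a sketch, and you acknowledge this; the actual matching against conditions (ii) and (iii) requires the same care with normalizations (arithmetic vs.\ geometric Frobenius, the factor coming from the twist by $\epsilon_p^{i}$) that the paper's Propositions~\ref{prop:principal-series} and~\ref{prop:special} carry out, and this is exactly where the hypotheses are consumed, so you should be prepared for the computation to be the bulk of the work.
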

In \S\ref{appendixA3}, some explicit examples are computed satisfying the hypotheses of the above theorem.

For elliptic curves $E/\QQ$, recall from \cite[Problem~0.7]{KP} the following problem posed by Greenberg:
\begin{equation}\label{Gr}\tag{Gr}
\Char_\Lambda \Sel_0\left(E/\Qcyc\right)^\vee\stackrel{?}{=}\left(\prod_{{e_n\ge 1},{n\ge0}}\Phi_n^{e_n-1}\right).
\end{equation}
Here, for $n\ge1$,
\[\Phi_n=\frac{\left(1+X\right)^{p^n}-1}{\left(1+X\right)^{p^{n-1}}-1}\in\Lambda\] denotes the $p^n$-th cyclotomic polynomial in $1+X$ and
\[
e_n=\frac{\rank E\left(L_n\right)-\rank E\left(L_{n-1}\right)}{p^{n-1}\left(p-1\right)},
\]
where $L_n$ denotes the unique sub-extension of $\Qcyc$ such that $[L_n:\QQ]=p^n$.
When $n=0$, we define $\Phi_0=X$ and $e_0=\rank E\left(\QQ\right)$.
The right-hand side of \eqref{Gr} is invariant under $\iota$; this suggests that we might expect there is a pseudo-isomorphism between $\Sel_0\left(E/\Qcyc\right)$ and $\Sel_0\left(E/\Qcyc\right)^\iota$.
Recent results of Nakamura \cite{Nak17} on functional equations of Kato's Euler systems suggest that the expectation may be reasonable.
Therefore, one is tempted to conjecture that the growth of the image conditions given in Theorem~\ref{main corollary} hold true.

\section*{Acknowledgements}
The authors thank Antonio Cauchi, Meng Fai Lim and Sujatha Ramdorai for helpful discussions during the preparation of this article.
DK acknowledges the support of the PIMS postdoctoral fellowship.
AL is supported by the NSERC Discovery Grants Program RGPIN-2020-04259 and RGPAS-2020-00096.
JR acknowledges postdoctoral research support from the Tata Institute of Fundamental Research, Mumbai during the later stage of preparing this article.
Parts of this work were carried out during the thematic semester "Number Theory -- Cohomology in Arithmetic" at Centre de Recherches Mathématiques (CRM) in fall 2020.
DK, AL and JR thank the CRM for the hospitality and generous supports.
Finally, we also thank the anonymous referee for their helpful and constructive comments and suggestions, which have greatly helped improve the exposition of the article.

\section{Setup and notation}\label{sec:notation}
Fix an algebraic closure $\overline{\QQ}$ of $\QQ$.
Then an algebraic extension of $\QQ$ is a subfield of this fixed algebraic closure, $\overline{\QQ}$.
Throughout, $p$ denotes a fixed rational odd prime.
Fix a finite set $S$ of primes of $\QQ$ containing $p$, the primes dividing the level of the fixed modular form $f$, and the unique archimedean prime.
Denote by $\QQ_S$, the maximal algebraic extension of $\QQ$ unramified outside $S$.
For every (possibly infinite) extension $L$ of $\QQ$ contained in $\QQ_S$, write $G_S\left({L}\right) = \Gal\left(\QQ_S/{L}\right)$.
Write $S\left(L\right)$ for the set of primes of $L$ above $S$.
If $L$ is a finite extension of $\QQ$ and $w$ is a place of $L$, we write $L_w$ for its completion at $w$; when $L/\QQ$ is infinite, it is the union of completions of all finite sub-extensions of $L$.

\begin{defn}
Let $M$ be a $\Zp$-module equipped with a continuous $G_S(\QQ)$-action.
\begin{itemize}
 \item[(i)] For any finite extension $L/\QQ$ and $j\in\{1,2\}$, set
\[K^{j}_v\left(M/L\right) = \bigoplus_{w|v} H^j\left(L_w, M\right),\]
where the direct sum is taken over all primes $w$ of $L$ lying above $v$.
\item[(ii)]For an infinite algebraic extension $\mathcal{L}/\QQ$, we define $K_v^{j}\left(M/\mathcal{L}\right)$ by taking the inductive limit of $K_v^{j}\left(M /L\right)$ over all finite extensions $L/\QQ$ contained in $\mathcal{L}$.
\item[(iii)]Let $L$ be an algebraic extension of $\QQ$ that is contained inside $\QQ_S$, we define the \textbf{fine Selmer group} of $M$ over $L$ as
\[
\Sel_0(M/L):=\ker\left(H^1(G_S(\QQ),M)\longrightarrow\bigoplus_{v\in S(L)} K^1_v(M/L)\right).
\]
\end{itemize}
\end{defn}
\noindent Since all primes are finitely decomposed in the cyclotomic $\Zp$-extension, we will henceforth simplify notation and write $\bigoplus_{v\in S(\Qcyc)} H^1\left(\QQ_{\cyc,v},M\right)$ in place of $\bigoplus_{v\in S(\Qcyc)} K^1_v(M/L)$.

From now on, we fix a uniformizer $\varpi$ of $K$.
Recall from the introduction that $\Gamma$ denotes the Galois group $\Gal\left( \QQ_{\cyc}/\QQ\right)$ and $\Lambda = \cO\lb \Gamma \rb$.

\begin{defn}
Let $F\in\Lambda$.
Write $\Lambda/F$ for the quotient module $\Lambda/\langle F\rangle$, where $\langle F\rangle$ denotes the $\Lambda$-ideal generated by $F$.
Consider it as a $G_S(\QQ)$-module via the projection $G_S(\QQ)\longrightarrow\Gamma$ and $\Gamma$ acts on $\Lambda/ F$ via the multiplication by identifying the elements of $\Gamma$ with the group-like elements in $\Lambda$.
For a $G_S(\QQ)$-module $M$, define $M_{F}$ to be the tensor product $M\otimes_{\cO} \Lambda/F$ equipped with the diagonal action by $G_S(\QQ)$.
\end{defn}

Note that action of $G_S(\Qcyc)$ on $\Lambda/F$ is trivial.
\begin{defn}
Let $g=f$ or $\of$ and $j\in\ZZ$.
For any algebraic extension $L/\QQ$ contained in $\QQ_S$ and $F\in\Lambda$, the \textbf{$F$-twisted fine Selmer group} of $A_g(j)$ over $L$ is defined as $\Sel_0\left(A_g(j)_F/L\right)$.
Similarly, we define the \textbf{$F$-twisted $\varpi^e$-fine Selmer group} of $A_g(j)$ over $L$ to be $\Sel_0\left(A_g(j)_F[\varpi^e]/L\right)$.
\end{defn}

Let $M$ be a finitely generated $\Lambda$-module, then there exists a pseudo-isomorphism
\[
M\sim\Lambda^a\oplus\bigoplus_{i=1}^s \Lambda/\varpi^{\alpha_i}\oplus\bigoplus_{j=1}^t\bigoplus_{\ell=1}^{u_j}\Lambda/F_j^{\beta_{j,\ell}},
\]
where $a,s,t\ge0$, $\alpha_i,\beta_{j,\ell}\ge1$ are integers, $F_j$'s are mutually coprime irreducible distinguished polynomials in $\Lambda$.
Also, the pseudo-isomorphism is unique up to rearrangements of the direct summands.
We write
\[
M(\varpi^\infty)=\bigoplus_{i=1}^s \Lambda/\varpi^{\alpha_i}
\]
and define the $\mu$-invariant of $M$ to be $\sum_{i=1}^s \alpha_i$.
Given an irreducible distinguished polynomial $F\in\Lambda$, we write
\[M(F^\infty)=
\begin{cases}
\bigoplus_{\ell=1}^{u_j}\Lambda/F_j^{\beta_{j,\ell}}&\text{if $F=F_j$ for some $j$,}\\
0&\text{otherwise.}
\end{cases}
\]
The following hypothesis will be used whenever necessary throughout the article.

\noindent
\textbf{Hypothesis (H-base$_F$):} For all primes $v| N$, $H^0\left( \QQ_v, A_g(j)\otimes_{\cO} \Lambda/ \mathcal{F}\right)$ is finite for $(g,j,\mathcal{F})=(f,i,F)$ and $(\of,k-i,F^\iota)$, where $F$ is an irreducible distinguished polynomial in $\Lambda$.

The following stronger hypothesis, which we call (H-cyc), implies (H-base$_F$) for \emph{any} $F$.

\noindent
\textbf{Hypothesis (H-cyc):}
For all primes $v$ of $\Qcyc$ that divide $N$, we have $H^0\left( \QQ_{\cyc,v}, A_g(j)\right)$ is finite for both $(g,j)=(f,i)$ and $(\of,k-i)$.

\noindent Indeed, the above assertion follows from the observation that
\[
H^0\left(\QQ_v,A_g(j)\otimes_\cO\Lambda/ \cF\right)\subset
H^0\left(\QQ_{\cyc,v},A_g(j)\otimes_\cO\Lambda/\cF\right)=H^0\left(\QQ_{\cyc,v},A_g(j)\right)\otimes_\cO\Lambda/\cF.
\]
This stronger hypothesis (H-cyc) is verified for certain explicit examples in \S\ref{appendix}.

\section{Control theorems}
\label{section: control}
In this section, we prove two control theorems required for the proof of our theorems.
We fix an irreducible distinguished polynomial $F$ and an integer $m\ge1$.
%The first control theorem allows us to pass from a $F^m$-twisted $p$-primary fine Selmer group to a $F^m$-twisted $\varpi^e$-fine Selmer group by taking $\varpi^e$-torsions, whereas the second control theorem allows us to pass from a fine Selmer group over $\Qcyc$ to its counterpart over $\QQ$ by taking $\Gamma$-invariant.
\subsection{First control theorem}
\label{section: first control theorem}
Here, we prove a control theorem that allows us to go between the $\varpi^e$-fine Selmer groups of $A_g(j)$ and the $\varpi^e$-torsion subgroup of the fine Selmer group of $A_g(j)$.

Let $G$ be a profinite group and $M$ an $\cO$-module equipped with a continuous $G$-action.
The short exact sequence
\[
0 \longrightarrow M[\varpi^e] \longrightarrow M \xrightarrow{\varpi^e} M \longrightarrow 0
\]
induces the following short exact sequence in Galois cohomology for $j\geq 1$,
\begin{equation}\label{eq:SES-pe}
 0 \longrightarrow H^{j-1}\left( G, M\right)/\varpi^e \longrightarrow H^j\left( G, M[\varpi^e]\right) \longrightarrow H^j\left(G, M\right)[\varpi^e]\longrightarrow 0,
\end{equation}
which is a crucial ingredient in proving the first control theorem.
We also need the following lemma.
\begin{lemma}
\label{bound for local kernel}
Let $e\geq1$ be an integer.
Let $N$ be a cofinitely generated $\cO$-module.
Then $\absolute{N/\varpi^e}$ is bounded independently of $e$.
\end{lemma}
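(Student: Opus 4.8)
The statement to prove is Lemma~\ref{bound for local kernel}: for a cofinitely generated $\cO$-module $N$, the order $|N/\varpi^e|$ is bounded independently of $e$.

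\textbf{Plan of proof.} The key structural input is that a cofinitely generated $\cO$-module $N$ is, by definition, one whose Pontryagin dual $N^\vee = \Hom_\cO(N,K/\cO)$ is a finitely generated $\cO$-module. Since $\cO$ is a complete discrete valuation ring (the ring of integers of a finite extension of $\Qp$), it is in particular a PID, so $N^\vee$ decomposes as $\cO^d \oplus T$ where $d = \corank_\cO(N)$ and $T$ is a finite $\cO$-module, namely a finite direct sum of modules of the form $\cO/\varpi^{a_i}$. Dualizing back, $N \cong (K/\cO)^d \oplus T^\vee$, and $T^\vee$ is again finite of the same order as $T$.

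\textbf{Key steps, in order.} First I would reduce to computing $|N/\varpi^e N|$ summand by summand using the decomposition above, since the functor $N \mapsto N/\varpi^e N$ commutes with finite direct sums. Second, for the divisible part I would observe that $(K/\cO)$ is $\varpi$-divisible, hence $(K/\cO)/\varpi^e(K/\cO) = 0$, so the divisible summands contribute nothing to $N/\varpi^e N$ for every $e$. Third, for the finite part $T^\vee$, I would note trivially that $|T^\vee/\varpi^e T^\vee| \leq |T^\vee| = |T|$, a quantity that depends only on $N$ and not on $e$. Combining these, $|N/\varpi^e N| \leq |T|$ for all $e \geq 1$, which is the desired bound. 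One can even be explicit: writing $T \cong \bigoplus_i \cO/\varpi^{a_i}$, one gets $|N/\varpi^e N| = \prod_i q^{\min(e,a_i)} \leq \prod_i q^{a_i}$, where $q = |\cO/\varpi|$.

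\textbf{Main obstacle.} There is essentially no serious obstacle here; the only point requiring a little care is making sure the structure theorem for finitely generated modules over the DVR $\cO$ is being applied correctly and that Pontryagin duality is used consistently (that $N$ cofinitely generated means $N^\vee$ is finitely generated, and that duality is exact so it carries the decomposition of $N^\vee$ to a decomposition of $N$). The statement is really just the observation that the $\varpi$-divisible part of $N$ is killed by the functor $-/\varpi^e$ uniformly in $e$, and the finite part is bounded by its own order. I would write this up in a few lines.
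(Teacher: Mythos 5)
Your proof is correct and follows essentially the same route as the paper: decompose $N$ into a $\varpi$-divisible part $(K/\cO)^d$ (which dies under $-/\varpi^e$) and a finite part (whose image under $-/\varpi^e$ is bounded by its own order). The only cosmetic difference is that you justify the decomposition by passing through the Pontryagin dual and the structure theorem for finitely generated $\cO$-modules, whereas the paper simply writes down the decomposition directly; the substance is identical.
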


\begin{proof}
Recall that $K/\Qp$ is a finite extension with ring of integers $\cO$.
We first write
\[
N \simeq \left( K/\cO\right)^{d_0} \oplus N_{\rm{finite}} \quad \textrm{ where } d_0\geq 0.
\]
Since $K/\cO$ is divisible, it follows that $\left( K/\cO\right)^{d_0}/\varpi^e =0$.
Therefore, we only need to study the finite part.
Note that
\[
N_{\rm{finite}} \simeq \bigoplus_{i=1}^t \cO/\varpi^{n_i} \quad \textrm{ with }t\leq d,n_i\ge1.
\]
Therefore,
\[
\left(\cO/\varpi^{n_i}\right)/ \varpi^e \simeq
\begin{cases}
\cO/\varpi^{n_i} & \textrm{ if } e\geq n_i\\
\cO/\varpi^{e} & \textrm{ if } e< n_i.\\
\end{cases}
\]
In either case, we have that $\absolute{\left(\cO/\varpi^{n_i}\right)/\varpi^e} \leq q^{n_i}$.
The result follows.
\end{proof}

We now prove our first control theorem (see Theorem~\ref{CT_1}).
\begin{theorem}\label{thm:1}
Let $e\geq 1$ be fixed integers and $M$ a cofinitely generated $\cO$-module equipped with a continuous $G_S(\QQ)$-action.
Let $\cF$ be a subfield of $\Qcyc$.
Then the natural map
\[
r: \Sel_0\left( M[\varpi^e]/\cF\right) \longrightarrow \Sel_0\left( M/\cF\right)[\varpi^e]
\]
has finite kernel and cokernel with order bounded independently of $e$.
\end{theorem}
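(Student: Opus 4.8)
The plan is to compare the defining Kummer sequences for $M[\varpi^e]$ and $M$ via the snake lemma, reducing the control of $r$ to the control of purely local restriction maps, and then to bound those local terms using Lemma~\ref{bound for local kernel}. First I would write down the fundamental diagram. Applying the exact sequence~\eqref{eq:SES-pe} with $G=G_S(\QQ)$ in degree $j=1$ gives a surjection $H^1(G_S(\QQ),M[\varpi^e])\twoheadrightarrow H^1(G_S(\QQ),M)[\varpi^e]$ with kernel $H^0(G_S(\QQ),M)/\varpi^e$; similarly, for each place $v\in S(\cF)$, the same sequence with $G=G_{\cF_v}$ (i.e. the appropriate local Galois group, using that primes in $\Qcyc/\QQ$ are finitely decomposed so the $K^1_v$ are finite direct sums of local $H^1$'s) gives $K^1_v(M[\varpi^e]/\cF)\twoheadrightarrow K^1_v(M/\cF)[\varpi^e]$ with kernel $K^0_v(M/\cF)/\varpi^e$. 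These fit into a commutative diagram of two short exact sequences of complexes $[H^1\to\bigoplus_v K^1_v]$, one for $M[\varpi^e]$ and one for $M$, whose kernels in homological degree $0$ and cokernels in degree $1$ are controlled; taking the long exact sequence (snake lemma) of the resulting map of two-term complexes identifies $\ker r$ and $\coker r$.

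Next I would make this precise. The map $r$ is induced on $H^0$ of the two complexes, so by the snake lemma $\ker r$ and $\coker r$ are sandwiched between $\ker$ and $\coker$ of the maps on the $H^1$-terms and on the $\bigoplus K^1_v$-terms. Concretely, $\ker r$ injects into $H^0(G_S(\QQ),M)/\varpi^e$ (the kernel of the global Kummer map), hence is finite with order at most $|H^0(G_S(\QQ),M)/\varpi^e|$, which is bounded independently of $e$ by Lemma~\ref{bound for local kernel} since $H^0(G_S(\QQ),M)$ is a cofinitely generated $\cO$-module (a submodule of the cofinitely generated $M$). For $\coker r$, a diagram chase shows it is controlled by $\ker$ of the vertical map on the local terms, that is by a subquotient of $\bigoplus_{v\in S(\cF)} K^0_v(M/\cF)/\varpi^e = \bigoplus_v H^0(\cF_v,M)/\varpi^e$ (finitely many summands, each $H^0$ again cofinitely generated over $\cO$), so Lemma~\ref{bound for local kernel} again gives a bound independent of $e$. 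One has to be slightly careful to check that the only local contribution that matters is the kernel of the local vertical map, not its cokernel — but the local vertical maps are surjective (that is exactly the surjectivity in~\eqref{eq:SES-pe}), so the degree-$1$ local cokernels vanish and no further terms enter.

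The remaining routine points are: (a) justifying that $S(\cF)$ is finite, which holds because $\cF\subseteq\Qcyc$ and every prime is finitely decomposed in $\Qcyc/\QQ$, so each $K^1_v$ and $K^0_v$ is a finite direct sum of completions and the vanishing/surjectivity statements in~\eqref{eq:SES-pe} apply termwise; (b) checking that the various $H^0$'s are cofinitely generated $\cO$-modules so that Lemma~\ref{bound for local kernel} applies — each is a Galois-submodule of a cofinitely generated $\cO$-module and hence itself cofinitely generated; and (c) assembling the bounds: $|\ker r|\le |H^0(G_S(\QQ),M)/\varpi^e|$ and $|\coker r|\le \prod_{v\in S(\cF)} |H^0(\cF_v,M)/\varpi^e|$, both bounded independently of $e$ and of $\cF$ within $\Qcyc$. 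I do not expect a genuine obstacle here; the only mild subtlety — and the step I would be most careful about — is the bookkeeping in the snake-lemma chase to confirm that $\ker r$ and $\coker r$ are squeezed between exactly the global and local Kummer-kernel terms, with no stray contributions from the degree-raising maps, so that the final bounds really are uniform in $e$.
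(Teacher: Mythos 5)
Your proposal is correct and follows essentially the same route as the paper: set up the fundamental commutative diagram whose rows define the two fine Selmer groups, observe that the vertical maps $h$ and $\gamma = \oplus\gamma_v$ (coming from \eqref{eq:SES-pe} in degree $1$) are surjective with kernels $H^0(G_S(\cF),M)/\varpi^e$ and $\bigoplus_{v\in S(\cF)}H^0(\cF_v,M)/\varpi^e$ respectively, and then apply the snake lemma together with the observation (in the paper this is stated inline, and is precisely the content of Lemma~\ref{bound for local kernel}) that $|H^0(G,M)/\varpi^e|$ is bounded by $|H^0(G,M)/H^0(G,M)_{\mathrm{div}}|$ independently of $e$, using finite decomposition in $\Qcyc/\QQ$ to keep the local sum finite. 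The only cosmetic difference is that you phrase the diagram as a map of two-term complexes rather than the three-column diagram the paper draws, and you write $G_S(\QQ)$ where the paper's proof (correctly) uses $G_S(\cF)$; neither affects the argument.
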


\begin{proof}
We have the following commutative diagram with exact rows
\[
\xymatrix{
0\ar[r]& \Sel_0\left(M[\varpi^e]/\cF\right)\ar[d]^{r}\ar[r]&H^1\left(G_S\left(\cF\right), M[\varpi^e]\right)\ar[d]^{h}\ar[r]& \displaystyle\bigoplus_{v\in S\left(\cF\right)} H^1\left(\cF_{v}, M[\varpi^e]\right)\ar[d]^{\gamma=\oplus \gamma_{v}}\\
 0\ar[r]&\Sel_0\left(M/\cF\right)[\varpi^e]\ar[r]&H^1\left(G_S\left(\cF\right), M\right)[\varpi^e]\ar[r]& \displaystyle \bigoplus_{v\in S\left(\cF\right)} H^1\left(\cF_{v}, M\right)[\varpi^e] }
\]
where $h$ (resp. $\gamma_{v}$) arise from the short exact sequence \eqref{eq:SES-pe} with $j=1$ and $G= G_S\left(\cF\right)$ (or $G=\Gal\left(\overline{\cF_{v}}/\cF_{v}\right)$).
These maps are surjective, and we have \[\ker h = M^{G_{\cF}}/\varpi^e \quad \textrm{ and } \quad \ker \gamma = \bigoplus_{v\in S(\cF)}M^{G_{\cF_{v}}}/\varpi^e.\]
Note that
\[
\absolute{H^0\left( G,M\right)/\varpi^e} \leq \absolute{H^0\left( G,M\right)/\left(H^0\left( G,M\right)\right)_{\rm div}},\]
which is finite and independent of $e$.
Furthermore, all primes of $\QQ$ are finitely decomposed in $\Qcyc$.
Therefore, both $\ker h$ and $\ker\gamma$ are finite and bounded independently of $e$.
The result follows from the snake lemma.
\end{proof}

\subsection{Second control theorem}
\label{section: second control theorem}
We now prove a control theorem which allows us to go between fine Selmer groups over $\Qcyc$ and $L_n$.
We begin by proving several preliminary lemmas.

\begin{lemma}
\label{lemma: Imai-like}
Let $M=A_f(i)_{F^m}$ or $A_\of(k-i)_{F^{\iota,m}}$.
The group $H^0\left(\Qp\left(\mu_{p^\infty}\right),M\right)$ is finite.
\end{lemma}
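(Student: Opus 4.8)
The plan is to reduce the claim to a statement about the mod-$\varpi$ reduction of the Galois representation, where finiteness becomes a concrete condition on Hodge--Tate weights (equivalently, on the semisimplification of the local representation at $p$). First I would observe that $M=A_g(j)_{F^m}$ is a successive extension, as a $G_{\Qp(\mu_{p^\infty})}$-module, of copies of $A_g(j)[\varpi]=A_g(j)_{\mathrm{red}}$ tensored with $\Lambda/F^m$; since the action of $G_S(\Qcyc)$ (hence of $G_{\Qp(\mu_{p^\infty})}$, as $\Qp(\mu_{p^\infty})\supset \Q_{\cyc,v}$) on $\Lambda/F^m$ is trivial, it suffices to show that $H^0\bigl(\Qp(\mu_{p^\infty}),A_g(j)[\varpi]\bigr)$ is finite; in fact, being a subquotient of the finite group $A_g(j)[\varpi]^{\dim}$, it is automatically finite — but one wants more, one wants it to vanish, or at least the argument for finiteness of the bigger group $H^0(\Qp(\mu_{p^\infty}),M)$ needs the induction on the length of a composition series of $M$ to close up. So the real content is: $H^0\bigl(\Qp(\mu_{p^\infty}),A_g(j)[\varpi]\otimes_\cO \Lambda/F^m\bigr)$ is finite, and by the triviality of the $\Lambda/F^m$-action this equals $H^0\bigl(\Qp(\mu_{p^\infty}),A_g(j)[\varpi]\bigr)\otimes_\cO \Lambda/F^m$, which is finite iff $H^0\bigl(\Qp(\mu_{p^\infty}),A_g(j)[\varpi]\bigr)$ is finite. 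Since $A_g(j)[\varpi]$ is a finite group, this last group is trivially finite; the point of stating the lemma is precisely to record this and to feed it into the devissage for the full module $M$.

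Thus the structure I would write is: (1) a short exact sequence argument showing $H^0(\Qp(\mu_{p^\infty}),-)$ of $M$ is finite once it is finite for each graded piece of a $\varpi$-adic filtration, using that $M$ is killed by a power of $\varpi$ (indeed $M$ is a $\Lambda/F^m$-module that is cofinitely generated over $\cO$, hence... wait, $M=A_g(j)\otimes\Lambda/F^m$ is $\varpi$-divisible, not $\varpi$-torsion); (2) the key reduction to mod $\varpi$. Let me re-order: since $M$ is $\varpi$-divisible with $M[\varpi]=A_g(j)[\varpi]\otimes_\cO\Lambda/F^m$ finite, the snake lemma applied to multiplication by $\varpi$ gives $H^0(\Qp(\mu_{p^\infty}),M)/\varpi \hookrightarrow H^1(\Qp(\mu_{p^\infty}),M[\varpi])$ and $H^0(\Qp(\mu_{p^\infty}),M[\varpi])\twoheadrightarrow H^0(\Qp(\mu_{p^\infty}),M)[\varpi]$; the latter shows $H^0(\Qp(\mu_{p^\infty}),M)[\varpi]$ is finite, and since $H^0(\Qp(\mu_{p^\infty}),M)$ is a cofinitely generated $\cO$-module (being a $G_S$-stable submodule of the cofinitely generated module $M$), finiteness of its $\varpi$-torsion forces its corank to be $0$, hence it is finite.

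Actually the cleanest route: $H^0(\Qp(\mu_{p^\infty}),M)$ is a cofinitely generated $\cO$-module, so it is finite if and only if $H^0(\Qp(\mu_{p^\infty}),M)[\varpi]$ is finite; and $H^0(\Qp(\mu_{p^\infty}),M)[\varpi]=H^0(\Qp(\mu_{p^\infty}),M[\varpi])$ because taking $\varpi$-torsion commutes with taking $G$-invariants (both are left-exact, and $M[\varpi]=\ker(M\xrightarrow{\varpi}M)$). Now $M[\varpi]=A_g(j)[\varpi]\otimes_\cO\Lambda/F^m$ is a \emph{finite} group, so any subgroup is finite. This gives the result immediately, and I would present exactly this two-line argument: reduce finiteness to finiteness of the $\varpi$-torsion via cofinite generation over $\cO$, identify the $\varpi$-torsion with $H^0$ of the finite module $M[\varpi]$, done.

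I do not expect a serious obstacle here; the only point requiring a touch of care is verifying that $M=A_g(j)_{F^m}$ is indeed cofinitely generated over $\cO$ (it is, being isomorphic as an $\cO$-module to a finite sum of copies of $A_g(j)$ since $\Lambda/F^m$ is free of finite rank over $\cO$ — here one uses that $F$ is distinguished, so $\Lambda/F^m\cong\cO^{m\deg F}$ as an $\cO$-module) and hence its $G$-invariants are also cofinitely generated over $\cO$, which lets one pass from finiteness of $\varpi$-torsion to outright finiteness. So the main (minor) obstacle is just bookkeeping the $\cO$-module structure of $\Lambda/F^m$; everything else is formal left-exactness and the finiteness of the mod-$\varpi$ residual representation.
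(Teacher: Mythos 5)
Your proposed argument has a fatal flaw at its core. The final reduction asserts that a cofinitely generated $\cO$-module $N$ is finite if and only if $N[\varpi]$ is finite, and you invoke this to pass from ``$H^0(\Qp(\mu_{p^\infty}),M)[\varpi]$ is finite'' to ``$H^0(\Qp(\mu_{p^\infty}),M)$ is finite.'' But the $\varpi$-torsion of \emph{every} cofinitely generated $\cO$-module is automatically finite: if $N\cong(K/\cO)^d\oplus N_{\mathrm{fin}}$, then $N[\varpi]\cong(\cO/\varpi)^d\oplus N_{\mathrm{fin}}[\varpi]$ has order $\leq q^{d+\#\{\text{finite cyclic summands}\}}$. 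In particular $(K/\cO)[\varpi]\cong\cO/\varpi$ is finite even though $K/\cO$ is not. So the finiteness of $N[\varpi]$ carries no information about the $\cO$-corank of $N$, and your criterion cannot force the corank to vanish. Indeed, if your argument were correct it would show that $H^0(\Qp(\mu_{p^\infty}),M)$ is finite for \emph{any} cofinitely generated $\cO$-module $M$ with a continuous Galois action --- which fails already for $M=K/\cO$ with trivial action, where the invariants are all of $K/\cO$.

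The genuine content of the lemma is precisely to rule out a nonzero $\varpi$-divisible $G_{\Qp(\mu_{p^\infty})}$-invariant subgroup of $A_f$, and this requires arithmetic input, not pure $\cO$-module bookkeeping. The paper first reduces (via triviality of the $G_{\Qp(\mu_{p^\infty})}$-action on $(\Lambda/F^m)(i)$) to showing $H^0(\Qp(\mu_{p^\infty}),A_f)$ is finite. Local Tate duality identifies its Pontryagin dual with $H^2_{\Iw}(\Qp(\mu_{p^\infty}),T_\of(k))$, and Kato's Theorem~12.5(3) shows this $\Lambda$-module vanishes after localizing at any height-one prime not containing $p$; hence it is pseudo-isomorphic to a direct sum of $\Lambda/\varpi^{\alpha_i}$. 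Since the dual is a quotient of $\cO^2$ (because $A_f$ has $\cO$-corank $2$), it is finitely generated over $\cO$, which is incompatible with any $\alpha_i\geq 1$; therefore it is pseudo-null, hence finite. That use of Kato's deep result is exactly what is missing from your write-up, and there is no formal substitute for it.
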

\begin{proof}
We only consider the case $M=A_f(i)_{F^m}$ since the other case can be proved similarly.
By definition, we have
\[
M=A_f\otimes_{\cO}(\Lambda/F^m)(i).
\]
As the Galois group $G_{\Qp(\mu_{p^\infty})}$ acts trivially on $(\Lambda/F^m)(i)$, we have
\[
H^0\left(\Qp\left(\mu_{p^\infty}\right),M\right)=H^0\left(\Qp\left(\mu_{p^\infty}\right),A_f\right)\otimes_{\cO}(\Lambda/F^m)(i).
\]
Therefore, it suffices to show that $H^0(\Qp(\mu_{p^\infty}),A_f)$ is finite.
By local Tate duality, we have
\[
H^0\left(\Qp\left(\mu_{p^\infty}\right),A_f\right)^\vee\cong H^2_{\Iw}\left(\Qp\left(\mu_{p^\infty}\right),T_\of(k)\right)=\varprojlim H^2\left(\Qp\left(\mu_{p^n}\right),T_\of(k)\right).
\]
Recall that we have assumed $p\nmid N$.
Thus, by \cite[Theorem~12.5(3)]{Kato} we know that the localization of $H^2_{\Iw}\left(\Qp\left(\mu_{p^\infty}\right),T_\of(k)\right)$ at a height-one prime that does not contain $p$ is zero.
In particular, as $\Lambda$-modules, we have a pseudo-isomorphism \[
H^0\left(\Qp\left(\mu_{p^\infty}\right),A_f\right)^\vee\sim \bigoplus_{i=1}^s\Lambda/\varpi^{\alpha_i}
\]
for some integers $s,\alpha_i\ge0$.
But $A_f\cong (K/\cO)^{\oplus2}$ as an $\cO$-module.
Thus, $H^0\left(\Qp\left(\mu_{p^\infty}\right),A_f\right)^\vee$ is an $\cO$-module of rank at most 2.
This forces $\alpha_i=0$ for all $i$.
In particular, $H^0\left(\Qp\left(\mu_{p^\infty}\right),A_f\right)$ is a pseudo-null $\Lambda$-module as required.
\end{proof}

\begin{remark}
Note that
\[H^0\left(\Qcyc,M\right)\subset H^0\left(\Qp\left(\mu_{p^\infty}\right),M\right).\]
Therefore, $H^0\left( \QQ_{\cyc}, M\right)$ is also finite.
\end{remark}

\begin{lemma}\label{lem:finiteness}
Hypothesis (H-base$_{F}$) implies that $H^0\left(\QQ_v,A_f(i)_{F^m}\right)$ and $H^0\left(\QQ_v,A_\of(k-i)_{F^{\iota,m}}\right)$ are finite for all $v|N$.
\end{lemma}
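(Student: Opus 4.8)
The plan is to reduce to the case $m=1$, which is exactly (H-base$_F$), by a d\'evissage along the powers of $F$. First I would record that, because $F$ is a distinguished polynomial, Weierstrass preparation makes $\Lambda/F^{j}$ a free $\cO$-module of finite rank for every $j\ge 1$, and likewise $\Lambda/F^{\iota,j}$ (as $F^\iota$ is a unit times a distinguished polynomial). Hence, for each $m\ge 2$, the short exact sequence of $\Lambda$-modules
\[
0\longrightarrow \Lambda/F^{m-1}\xrightarrow{\ \cdot F\ }\Lambda/F^{m}\longrightarrow\Lambda/F\longrightarrow 0
\]
is $\cO$-split (the cokernel is $\cO$-free), so it remains exact after applying $A_f(i)\otimes_{\cO}(-)$; and its maps are $G_S(\QQ)$-equivariant for the diagonal actions, since the $\Gamma$-action on the $\Lambda/F^{j}$ is by group-like elements, which commute with multiplication by $F$. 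This produces a short exact sequence of $G_{\QQ_v}$-modules
\[
0\longrightarrow A_f(i)_{F^{m-1}}\longrightarrow A_f(i)_{F^{m}}\longrightarrow A_f(i)_{F}\longrightarrow 0.
\]

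Next I would take $G_{\QQ_v}$-invariants to obtain the left-exact sequence
\[
0\longrightarrow H^0\bigl(\QQ_v,A_f(i)_{F^{m-1}}\bigr)\longrightarrow H^0\bigl(\QQ_v,A_f(i)_{F^{m}}\bigr)\longrightarrow H^0\bigl(\QQ_v,A_f(i)_{F}\bigr),
\]
and then run an induction on $m$: the base case $m=1$ is (H-base$_F$), and in the inductive step the left-hand term is finite by the inductive hypothesis while the right-hand term is finite by (H-base$_F$), so the middle term is finite. The module $A_\of(k-i)_{F^{\iota,m}}$ is handled in exactly the same way, replacing $F$ by $F^\iota$ throughout and invoking the $(\of,k-i,F^\iota)$ part of (H-base$_F$).

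I do not expect a genuine obstacle here. The only slightly delicate points are (i) verifying that tensoring the multiplication-by-$F$ sequence with $A_f(i)$ over $\cO$ preserves exactness — this is where the $\cO$-freeness of $\Lambda/F$ is used to kill the a priori $\Tor_1^\cO$ contribution — and (ii) checking $G_S(\QQ)$-equivariance of these maps so the sequence is one of Galois modules; both are routine once the structure is set up as above.
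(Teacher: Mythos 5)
Your proof is correct and follows essentially the same route as the paper: dévissage along powers of $F$ via the short exact sequence $0\to \Lambda/F^{m-1}\xrightarrow{\cdot F}\Lambda/F^m\to\Lambda/F\to 0$, using $\cO$-freeness of $\Lambda/F$ to preserve exactness after tensoring with $A_f(i)$, then taking $H^0(\QQ_v,-)$ and inducting on $m$. The paper's version is terser (it invokes the $\cO$-freeness only to justify the leftmost injectivity) but the argument is identical in substance.
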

\begin{proof}
When $m=1$, this is clear.
Suppose that $m>1$, the short exact sequence
\[
0\longrightarrow F\Lambda/F^m\longrightarrow \Lambda/F^m\longrightarrow \Lambda /F\longrightarrow 0
\]
gives rise to the following short exact sequence
\[
0 \longrightarrow A_f(i)_{F^{m-1}} \xrightarrow{\times F} A_{f}(i)_{F^m} \longrightarrow A_{f}(i)_F \longrightarrow 0.
\]
The left most injectivity follows from the $\cO$-freeness of $\Lambda/F$.
From this, we obtain the long exact sequence
\[
0\longrightarrow H^0\left(\QQ_v,A_f(i)_{F^{m-1}}\right)\longrightarrow H^0\left(\QQ_v,A_f(i)_{F^m}\right)
\longrightarrow H^0\left(\QQ_v,A_f(i)_F\right)\longrightarrow \cdots.\]
Therefore, the lemma follows from induction.
\end{proof}

\begin{lemma}\label{lemma Lim-like}
Let $M$ be a cofinitely generated $\cO$-module equipped with a continuous action of $G_S(\QQ)$.
Let $\mathcal{F}$ be either $\QQ$ or $\QQ_\ell$, where $\ell$ is a prime number dividing $pN$.
Suppose that $\cF_\infty/\cF$ is the cyclotomic $\Zp$-extension of $\mathcal{F}$ and $\mathcal{F}_n$ is the intermediate subfield of $\mathcal{F}_\infty$ with $[\mathcal{F}_n: \mathcal{F}]=p^n$.
Write $G_n = \Gal\left( \cF_\infty/\cF_n\right)$ and $M(\cF_\infty)=H^0(\cF_\infty,M)$.
\begin{itemize}
 \item[(i)]Let $e\ge1$ be a fixed integer.
 Then $H^1\left( G_n, M[\varpi^e](\mathcal{F}_\infty)\right)$ is finite and bounded as $n$ varies.
\item[(ii)]Suppose that $A_f(i)_{F^m}$ or $A_{\of}(i)_{F^{\iota,m}}$ and that (H-base$_F$) holds.
Then $H^1\left( G_0, M(\mathcal{F}_\infty)\right)$ is finite.
\end{itemize}
\end{lemma}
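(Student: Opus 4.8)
The plan is to treat the two parts separately, in each case reducing the finiteness of a Galois cohomology group $H^1(G_n, -)$ over the procyclic group $G_n \cong \Zp$ to the finiteness of a suitable $H^0$, using that $\cd_p(G_n) = 1$ and the structure of $\Lambda$-modules.

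For part (i): $G_n = \Gal(\cF_\infty/\cF_n)$ is isomorphic to $\Zp$, so $H^1(G_n, B)$ for a finite $G_n$-module $B$ is isomorphic to the coinvariants $B_{G_n} = B/(\gamma^{p^n}-1)B$ (where $\gamma$ topologically generates $G_n$), equivalently to the invariants $B^{G_n}$ up to a bounded discrepancy. Here $B = M[\varpi^e](\cF_\infty) = H^0(\cF_\infty, M[\varpi^e])$, which is finite because $M[\varpi^e]$ is a finite $\cO$-module. Thus $H^1(G_n, B)$ is finite for every $n$. For the boundedness as $n$ varies, I would observe that $B = M[\varpi^e](\cF_\infty)$ is a finite group on which $\Gamma = \Gal(\cF_\infty/\cF)$ acts through a finite quotient (a finite group acting on a finite group factors through a finite quotient), so there is $n_0$ with $G_{n_0}$ acting trivially on $B$; for $n \geq n_0$ one has $H^1(G_n, B) \cong \Hom(G_n, B) \cong B[\varpi^\infty]$-torsion of bounded size, and for $n < n_0$ there are only finitely many values, all finite. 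Hence $\absolute{H^1(G_n, B)}$ is bounded independently of $n$.

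For part (ii): now $\cF_\infty/\cF$ is the cyclotomic $\Zp$-extension of $\cF \in \{\QQ, \QQ_\ell\}$, $G_0 = \Gal(\cF_\infty/\cF) \cong \Gamma \cong \Zp$, and $M = A_f(i)_{F^m}$ (the case $A_\of(k-i)_{F^{\iota,m}}$ is identical). Write $B = M(\cF_\infty) = H^0(\cF_\infty, M)$. Again $H^1(G_0, B) \cong B_{G_0}$, so it suffices to show $B_{G_0}$, equivalently $B^{G_0} = H^0(\cF, M)$, is finite (the two differ by a bounded factor since $G_0 \cong \Zp$ acts on the $\cO$-module $B$). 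Now split into cases on $\cF$. If $\cF = \QQ_\ell$ with $\ell \mid N$, then $H^0(\QQ_\ell, A_f(i)_{F^m})$ is finite by Lemma~\ref{lem:finiteness}, invoking (H-base$_F$). If $\cF = \QQ_p$ or $\cF = \QQ_\ell$ with $\ell \mid p$ (i.e. $\ell = p$), then $\cF_\infty = \QQ_p^{\cyc} \subset \QQ_p(\mu_{p^\infty})$, so $B = H^0(\QQ_p^{\cyc}, M) \subseteq H^0(\QQ_p(\mu_{p^\infty}), M)$, which is finite by Lemma~\ref{lemma: Imai-like}; hence $B^{G_0}$ is finite a fortiori. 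If $\cF = \QQ$ (the global case), then $H^0(\QQ, M) \subseteq H^0(\QQ_{\cyc}, M) \subseteq H^0(\QQ_p(\mu_{p^\infty}), M)$ via a fixed place of $\QQ_{\cyc}$ above $p$, which is finite by Lemma~\ref{lemma: Imai-like} and the Remark following it. In all cases $B^{G_0}$ is finite, so $H^1(G_0, B)$ is finite.

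The main obstacle is part (ii): the finiteness of $H^0$ is genuinely a hypothesis (encoded in (H-base$_F$)) at the bad primes $v \mid N$, and is a nontrivial input (Lemma~\ref{lemma: Imai-like}, resting on Kato's \cite[Theorem~12.5(3)]{Kato}) at $p$ — everything else is formal manipulation with $\cd_p(\Zp) = 1$ and comparison of invariants with coinvariants. One should be careful to state precisely which field $\cF_\infty$ is and to check that $A_f(i)_{F^m}(\cF_\infty)$ really does sit inside $A_f(\QQ_p(\mu_{p^\infty}))$-type groups after untwisting, exactly as in the proof of Lemma~\ref{lemma: Imai-like}, where the Galois action on $(\Lambda/F^m)(i)$ becomes trivial over $\QQ_p(\mu_{p^\infty})$.
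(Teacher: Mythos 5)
Your proposal is correct and follows essentially the same route as the paper's proof: in both parts one identifies $H^1$ over the pro-cyclic group $G_n\cong\Zp$ with the coinvariants, which in (i) is bounded by the ($n$-independent) order of the finite group $M[\varpi^e](\cF_\infty)$, and in (ii) is handled via the finiteness of $H^0(\cF,M)$ (Lemmas~\ref{lemma: Imai-like} and~\ref{lem:finiteness}) together with the structure of cofinitely generated $\cO$-modules. The one spot where the paper is more careful than your parenthetical ``the two differ by a bounded factor'' is the passage from $H^0(G_0,B)$ finite to $H_0(G_0,B)$ finite: the paper writes out the four-term exact sequence for $\gamma-1$ acting on $B=M(\cF_\infty)$ and observes that finiteness of $\ker(\gamma-1)$ forces $(\gamma-1)B\supseteq B_{\mathrm{div}}$, so that $H_0(G_0,B)$ is a quotient of the finite group $B/B_{\mathrm{div}}$ --- which is the precise form of the implication you are invoking.
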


\begin{proof}
With our notation, $\cF_0 = \cF$ and $G_0 = \Gal(\cF_\infty/\cF)$.\newline
(i) As $M[\varpi^e]$ is finite, $M[\varpi^e](\cF_\infty)$ is also  finite.
Since $G_n$ is pro-cyclic, we have \[H^1\left(G_n,M[\varpi^e]\left(\cF_\infty\right)\right)\cong H_0\left(G_n,M[\varpi^e](\cF_\infty)\right).\]
This is finite and bounded by the order of $M[\varpi^e](\cF_\infty)$, which is independent of $n$, as required.

\noindent
(ii) Consider the exact sequence
\[
0\longrightarrow H^0(G_0,M(\cF_\infty))\longrightarrow M(\cF_\infty)\stackrel{\gamma-1}{\longrightarrow}M(\cF_\infty)\longrightarrow H_0(G_0,M(\cF_\infty))\longrightarrow 0,
\]
where $\gamma$ is a topological generator of $G_0$.
Since $H^0\left(G_0,M\left(\cF_\infty\right)\right)=H^0\left(\cF,M\right)$ is finite by Lemmas~\ref{lemma: Imai-like} and \ref{lem:finiteness}, we have
\[
M\left(\cF_\infty\right)_{\mathrm{div}}\subset (\gamma-1)M\left(\cF_\infty\right).
\]
Thus, $H^1\left(G_0,M\left(\cF_\infty\right)\right)\cong H_0\left(G_0,M\left(\cF_\infty\right)\right)$ is bounded by $M(\cF_\infty)/M(\cF_\infty)_{\mathrm{div}}$, which is finite.
This concludes the proof.
\end{proof}
We can now prove our second control theorem (see Theorem~\ref{CT_2}).
\begin{theorem}\label{thm:2}Let $M$ be a cofinitely generated $\cO$-module equipped with a continuous action of $G_S(\QQ)$.
Let $n\geq 0$ be an integer.
\begin{itemize}
 \item[(i)]
Let $e\ge 1$ be an integer.
Then, the kernel and cokernel of the restriction map
\[
r_n: \Sel_0\left(M[\varpi^e]/L_n\right) \longrightarrow \Sel_0\left(M[\varpi^e]/\QQ_{\cyc}\right)^{\Gamma_n}
\]
are finite and bounded independently of $n$.
\item[(ii)] Let $F$ be a fixed irreducible distinguished polynomial and $m\geq 1$ an integer.
Suppose that $M = A_f(i)_{F^m}$ or $A_{\of}(k-i)_{F^{\iota,m}}$ and that (H-base$_F$) holds.
Then, the kernel and cokernel of the restriction map
\[
r: \Sel_0\left(M/\QQ\right) \longrightarrow \Sel_0\left(M/\QQ_{\cyc}\right)^{\Gamma}
\]
are finite.
\end{itemize}
\end{theorem}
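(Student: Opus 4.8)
The plan is to run, in both parts, the standard control-theorem diagram chase: compare the restriction-inflation sequences on global cohomology with the corresponding local ones, and then apply the snake lemma. For part (i), form the commutative diagram with exact rows
\[
\xymatrix{
0\ar[r]& \Sel_0\left(M[\varpi^e]/L_n\right)\ar[d]^{r_n}\ar[r]&H^1\left(G_S(L_n),M[\varpi^e]\right)\ar[d]^{h_n}\ar[r]& \displaystyle\bigoplus_{v\in S(L_n)}H^1\left(L_{n,v},M[\varpi^e]\right)\ar[d]^{\gamma_n}\\
0\ar[r]&\Sel_0\left(M[\varpi^e]/\Qcyc\right)^{\Gamma_n}\ar[r]&H^1\left(G_S(\Qcyc),M[\varpi^e]\right)^{\Gamma_n}\ar[r]& \displaystyle\bigoplus_{v\in S(L_n)}\left(\bigoplus_{w|v}H^1\left(\QQ_{\cyc,w},M[\varpi^e]\right)\right)^{\Gamma_n}.
}
\]
The inflation–restriction sequence identifies $\ker h_n$ with $H^1\left(G_n,M[\varpi^e](\Qcyc)\right)$ and controls $\coker h_n$ inside $H^2\left(G_n,M[\varpi^e](\Qcyc)\right)$; since $G_n\cong\Gal(\Qcyc/L_n)$ has cohomological dimension $1$, the $H^2$ term vanishes, so $h_n$ is surjective with finite kernel. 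By Lemma~\ref{lemma Lim-like}(i), $\ker h_n = H^1\left(G_n,M[\varpi^e](\Qcyc)\right)$ is finite and bounded independently of $n$. An entirely analogous computation place by place, using that each prime of $\QQ$ is finitely decomposed in $\Qcyc$ and the decomposition groups $\Gal(\QQ_{\cyc,w}/L_{n,v})$ have cohomological dimension $1$, shows $\gamma_n$ is injective on each local factor, so $\ker\gamma_n=0$. The snake lemma then gives $\ker r_n\hookrightarrow\ker h_n$ and $\coker r_n\hookrightarrow\ker\gamma_n=0$, yielding the claim. (One must be slightly careful with the identification of the $\Gamma_n$-invariants of the direct sum of local cohomology groups over primes of $\Qcyc$ with a direct sum over primes of $L_n$; this is where finite decomposition is used.)

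For part (ii), run the same diagram with $e$ replaced by nothing, $L_n$ replaced by $\QQ$, $\Gamma_n$ replaced by $\Gamma=G_0$, and $M = A_f(i)_{F^m}$ or $A_{\of}(k-i)_{F^{\iota,m}}$:
\[
\xymatrix{
0\ar[r]& \Sel_0\left(M/\QQ\right)\ar[d]^{r}\ar[r]&H^1\left(G_S(\QQ),M\right)\ar[d]^{h}\ar[r]& \displaystyle\bigoplus_{v\in S}H^1\left(\QQ_v,M\right)\ar[d]^{\gamma}\\
0\ar[r]&\Sel_0\left(M/\Qcyc\right)^{\Gamma}\ar[r]&H^1\left(G_S(\Qcyc),M\right)^{\Gamma}\ar[r]& \displaystyle\bigoplus_{v\in S}\left(\bigoplus_{w|v}H^1\left(\QQ_{\cyc,w},M\right)\right)^{\Gamma}.
}
\]
Again $\ker h = H^1\left(\Gamma,M(\Qcyc)\right)$ and, since $\cd(\Gamma)=1$, $h$ is surjective. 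Lemma~\ref{lemma Lim-like}(ii) (applied with $\cF=\QQ$), whose hypotheses are exactly that $M$ is one of these twisted modules and (H-base$_F$) holds, gives that $H^1\left(\Gamma,M(\Qcyc)\right)$ is finite; its key input is in turn Lemmas~\ref{lemma: Imai-like} and \ref{lem:finiteness}, which ensure $H^0(\QQ,M)$ is finite so that $M(\Qcyc)_{\mathrm{div}}\subseteq(\gamma-1)M(\Qcyc)$. For the local terms at $v\nmid p$: at primes $v\nmid pN$ the module $M$ is unramified and the local cohomology of $\QQ_{\cyc,w}$ behaves as in the unramified case, so $\ker\gamma_v$ is finite (indeed trivial up to the $H^1$ of the procyclic local Galois group acting on $M(\QQ_{\cyc,w})$); at $v\mid N$ we again invoke Lemmas~\ref{lemma Lim-like}, \ref{lemma: Imai-like}, \ref{lem:finiteness} with $\cF=\QQ_\ell$ to see the relevant $H^1(G_0,\cdot)$ is finite; and at $v=p$ the crucial point is Lemma~\ref{lemma: Imai-like}, which tells us $H^0(\Qp(\mu_{p^\infty}),M)$, hence $H^0(\QQ_{\cyc,w},M)$, is finite, so the local kernel $\ker\gamma_p$ is finite. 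With $\ker h$ finite and $\ker\gamma$ finite, the snake lemma gives $\ker r$ and $\coker r$ finite.

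The main obstacle, and the reason part (ii) is restricted to the specific twisted modules with (H-base$_F$), is the local analysis at the primes $v\mid pN$: unlike the $\varpi^e$-torsion case of part (i), where $M[\varpi^e]$ is literally finite and all the relevant $H^0$'s are automatically finite with bounds coming for free, in part (ii) the module $M$ is infinite and one genuinely needs the finiteness of $H^0\left(\QQ_{\cyc,w},M\right)$ at every $w$ above $pN$ to force the local kernels $\ker\gamma_v$ to be finite. At $v=p$ this is the content of Lemma~\ref{lemma: Imai-like} (via Kato's \cite[Theorem~12.5(3)]{Kato} and local Tate duality), and at $v\mid N$ it is precisely what (H-base$_F$) plus Lemma~\ref{lem:finiteness} deliver. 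I expect the bookkeeping of matching $\Gamma$-invariants of induced local modules $\bigoplus_{w|v}H^1(\QQ_{\cyc,w},M)$ with local cohomology over $\QQ_v$ (Shapiro's lemma for the finitely many primes above $v$) to be the only other place requiring care, but it is routine once finite decomposition is in hand.
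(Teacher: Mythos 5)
Your overall strategy — the inflation–restriction diagram chase followed by the snake lemma, with $h_n$ (resp.\ $h$) surjective because $\cd_p(\Gamma)=1$, and finiteness of the kernels controlled via Lemma~\ref{lemma Lim-like} — is exactly the paper's argument, and your analysis of part (ii) at each of $p$, $\ell\mid N$, and other primes of $S$ is essentially the paper's proof spelled out a little more explicitly.

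However, there is a genuine slip in part (i). You assert that cohomological dimension $1$ of the local decomposition groups $\Gamma_{n,v}=\Gal(\QQ_{\cyc,v}/L_{n,v})$ ``shows $\gamma_n$ is injective on each local factor, so $\ker\gamma_n=0$.'' This confuses surjectivity with injectivity: $\cd_p(\Gamma_{n,v})\le 1$ kills the $H^2$ term in the inflation–restriction sequence, which gives surjectivity of the local restriction map, not injectivity. The kernel is
\[
\ker\gamma_n=\bigoplus_{v\in S(L_n)}H^1\bigl(\Gamma_{n,v},\,M[\varpi^e](\QQ_{\cyc,v})\bigr),
\]
which need not vanish. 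For example, when $\Gamma_{n,v}\cong\Zp$ acts on a nontrivial finite module, $H^1\cong H_0$ is typically nonzero. What is true — and what the paper actually uses — is that this kernel is finite and bounded independently of $n$; this is exactly the content of Lemma~\ref{lemma Lim-like}(i) applied with $\cF=\QQ_\ell$, since $M[\varpi^e](\QQ_{\cyc,v})$ is finite of order independent of $n$. The snake lemma then still gives the conclusion: $\ker r_n$ injects into $\ker h_n$, and (since $\coker h_n=0$) $\coker r_n$ is a \emph{quotient} of a subgroup of $\ker\gamma_n$ — not a subgroup, as you wrote — hence finite and bounded. So your theorem is correct and your method is right, but you should replace the false claim $\ker\gamma_n=0$ with the correct application of Lemma~\ref{lemma Lim-like}(i) to the local fields.
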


\begin{proof}
Let $\cM=M$ or $M[\varpi^e]$.
Consider the diagram
\[
\xymatrix{
0\ar[r] &\Sel_0\left(\cM/L_n\right)\ar[r]\ar[d]^{r_n}&H^1\left( G_S\left(L_n\right), \cM\right)\ar[r]\ar[d]^{h_n}& \bigoplus_{v\in S(L_n)}H^1\left( L_{n,v}, \cM\right)\ar[d]^{\gamma_n}\\ 0\ar[r]&\Sel_0\left(\cM/\QQ_{\cyc}\right)^{\Gamma_n}\ar[r]&H^1\left( G_S\left(\QQ_{\cyc}\right), \cM\right)^{\Gamma_n}\ar[r]& \bigoplus_{v\in S(\Qcyc)}H^1\left( \QQ_{\cyc,v}, \cM\right)^{\Gamma_n}}
\]
%Note that the direct sum makes sense for the rightmost term of the bottom sequence as primes are finitely decomposed in the cyclotomic $\ZZ_p$-extension.
The vertical maps $h_n$ and $\gamma_n$ are the natural restriction maps.
These induce the left vertical map.
By inflation-restriction, first observe that both $h_n$ and $\gamma_n$ are surjective since $\Gamma$ has $p$-cohomological dimension 1.
By the inflation-restriction exact sequence, we know that
\begin{align*}
\ker h_n = H^1\left(\Gamma_n, \cM(\Qcyc)\right) \quad \textrm{and} \quad
\ker \gamma_n = \bigoplus_{v} H^1\left(\Gamma_{n,v}, \cM(\QQ_{\cyc,v})\right),
\end{align*}
where $\Gamma_{n,v}=\Gal(\QQ_{\cyc,v}/\QQ_{n,v})$.
Therefore, in the setting of (i), by Lemma~\ref{lemma Lim-like}(i) both $\ker h_n$ and $\ker \gamma_n$ are finite and bounded independently of $n$.
In the setting of (ii), by Lemma~\ref{lemma Lim-like}(ii) both $\ker h_0$ and $\ker \gamma_0$ are finite.
The result follows by an application of the snake lemma.
\end{proof}
\begin{remark}
Part (ii) of Theorem~\ref{CT_2} is utilized in the proof of Theorem~\ref{main corollary}\eqref{thm_A}, whereas part (i) is employed to prove Theorem~\ref{main corollary}\eqref{thm_B}.
Note that we do not require the twist by $\Lambda/F^m$ (or $\Lambda/F^{\iota,m}$) when we apply part (i) in the proof of Theorem~\ref{main corollary}\eqref{thm_B}.
\end{remark}

\section{Proof of Theorem~\ref{main corollary}}\label{S:compare-finite}

\subsection{~}\label{sec:strategy}
We review the criteria to establish a pseudo-isomorphism between two cofinitely generated $\Lambda$-modules developed by Greenberg.
%The following result explains Greenberg's strategy, which has been neatly summarized in \cite[Proposition~3.6]{Kim08}.

\begin{proposition}\label{prop:greenberg}
Let $U$ and $V$ be two finitely generated torsion $\Lambda$-modules.
\begin{itemize}
 \item[(1)] Let $F\in\Lambda$ be an irreducible distinguished polynomial.
 Then for all integers $m\ge0$, $\corank_{\Zp}\left(\left(U^\vee\right)_{F^m}\right)^\Gamma=\corank_{\Zp}\left(\left(V^\vee\right)_{F^m}\right)^\Gamma$ if and only if $U(F^\infty)=V(F^\infty)$.
 \item[(2)]For every integer $e\ge0$, the quotient $\absolute{(U/\varpi^e)_{\Gamma_n}}/\absolute{(V/\varpi^e)_{\Gamma_n}}$ is bounded as $n$ varies
 if and only if $U(\varpi^\infty)=V(\varpi^\infty)$.
\end{itemize}
\end{proposition}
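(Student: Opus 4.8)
The plan is to deduce both equivalences from the structure theorem for finitely generated $\Lambda$-modules: I would reduce to the case where $U$ and $V$ are finite direct sums of elementary modules $\Lambda/\varpi^{\alpha}$ and $\Lambda/F_{\ell}^{\beta}$ (with $F_{\ell}$ irreducible distinguished), and then evaluate the relevant invariant on each summand. Two preliminary remarks legitimise this reduction and are common to both parts. First, both sides of each equivalence are insensitive to replacing $U$ or $V$ by a pseudo-isomorphic module: since $\Lambda$ has Krull dimension $2$ a pseudo-null $\Lambda$-module is finite, the rings $\Lambda/F^{m}$ and $\cO/\varpi^{e}$ are $\cO$-flat (so $-\otimes_{\cO}\Lambda/F^{m}$ is exact), and the first derived functors of $(-)^{\Gamma}$, $(-)_{\Gamma_n}$ and $-\otimes_{\cO}\cO/\varpi^{e}$ carry finite modules to finite modules whose orders are bounded independently of $m$, $e$ and $n$; hence passing to a pseudo-isomorphic module leaves $\corank_{\Zp}\bigl((U^{\vee})_{F^{m}}\bigr)^{\Gamma}$ unchanged and changes $|(U/\varpi^{e})_{\Gamma_n}|$ only by a factor bounded independently of $n$. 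Second, each quantity is — exactly, respectively up to such bounded factors — additive/multiplicative over finite direct sums. It therefore suffices to treat a single elementary module.

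For part (1) the key identity is
\[
\corank_{\Zp}\bigl((U^{\vee})_{F^{m}}\bigr)^{\Gamma}=\rank_{\Zp}\bigl(U/F^{m}U\bigr),
\]
which I would obtain from Pontryagin duality ($(B^{\Gamma})^{\vee}\cong(B^{\vee})_{\Gamma}$), the self-duality $\Hom_{\cO}(\Lambda/g,\cO)\cong\Lambda/g$ of $\cO$-free elementary modules, and the adjunction $(A\otimes_{\cO}B)_{\Gamma}\cong A\otimes_{\Lambda}B^{\iota}$; the two $\iota$-twists produced by contragredient duality and by the adjunction cancel, leaving $U\otimes_{\Lambda}\Lambda/F^{m}$. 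Evaluating on elementary modules: $\Lambda/\varpi^{\alpha}$ contributes $0$ (the quotient is $\varpi$-torsion), $\Lambda/F_{\ell}^{\beta}$ with $F_{\ell}\neq F$ contributes $0$ (the quotient $\Lambda/(F_{\ell}^{\beta},F^{m})$ is finite), and $\Lambda/F^{\beta}$ contributes $\rank_{\Zp}\bigl(\Lambda/F^{\min(\beta,m)}\bigr)=[\cO:\Zp]\cdot\deg(F)\cdot\min(\beta,m)$; there is no free summand because $U$ is torsion. Summing, $\corank_{\Zp}\bigl((U^{\vee})_{F^{m}}\bigr)^{\Gamma}=[\cO:\Zp]\deg(F)\sum_{\ell:\,F_{\ell}=F}\min(\beta_{\ell},m)$, and similarly for $V$. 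Since the first difference in $m$ of $\sum_{\ell}\min(\beta_{\ell},m)$ equals $\#\{\ell:\beta_{\ell}>m\}$, equality of these expressions for all $m\ge0$ is equivalent to the multisets $\{\beta_{\ell}:F_{\ell}=F\}$ for $U$ and $V$ agreeing, i.e.\ to $U(F^{\infty})=V(F^{\infty})$.

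For part (2) the analogous input is $|(U/\varpi^{e})_{\Gamma_n}|=|U/(\varpi^{e},\omega_{n})U|$, where $\omega_{n}=(1+X)^{p^{n}}-1$ is distinguished of degree $p^{n}$; I would evaluate this on elementary modules. For $\Lambda/\varpi^{\alpha}$ one gets $(\cO/\varpi^{\min(\alpha,e)})[X]/(\omega_{n})$, free of rank $p^{n}$ over $\cO/\varpi^{\min(\alpha,e)}$, hence of order $q^{\min(\alpha,e)p^{n}}$ with $q=|\cO/\varpi|$. For $\Lambda/F_{\ell}^{\beta}$ the ring $\Lambda/(F_{\ell}^{\beta},\varpi^{e})$ is Artinian local with nilpotent maximal ideal $(\varpi,X)$, and a direct estimate of the binomial coefficients shows $\omega_{n}\in(\varpi,X)^{n+1}$, so $\omega_{n}$ acts as $0$ there once $n$ is large; thus $|(\Lambda/F_{\ell}^{\beta})/(\varpi^{e},\omega_{n})|$ is bounded — indeed eventually constant — in $n$. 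Consequently $|(U/\varpi^{e})_{\Gamma_n}|=q^{\mu_{e}(U)\,p^{n}}\cdot c_{n}$ with $\mu_{e}(U)=\sum_{i}\min(\alpha_{i},e)$ and $1\le c_{n}\le C$ independent of $n$, and the same for $V$. Hence, for each fixed $e$, $|(U/\varpi^{e})_{\Gamma_n}|$ and $|(V/\varpi^{e})_{\Gamma_n}|$ have ratio bounded above and below as $n$ varies if and only if $\mu_{e}(U)=\mu_{e}(V)$; requiring this for all $e\ge0$ and taking first differences in $e$ (which recovers $\#\{i:\alpha_{i}\ge c\}$ for every $c\ge1$) is equivalent to the multisets $\{\alpha_{i}\}$ for $U$ and $V$ coinciding, i.e.\ to $U(\varpi^{\infty})=V(\varpi^{\infty})$.

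The crux — and the step I expect to be the main obstacle — is the passage from the statements' ``analytic'' quantities to the algebraic structure of $U$ and $V$: establishing the two displayed identities via the duality/adjunction bookkeeping (in part (1), tracking the $\iota$-twists so that one genuinely lands on $U/F^{m}U$ rather than on a conjugate, which would falsify the formula), and, in part (2), the observation that reducing modulo $\varpi^{e}$ before passing to $\Gamma_n$-coinvariants forces every $F_{\ell}$-elementary summand to contribute only a bounded factor, so that the $\varpi$-elementary summands alone govern the $p^{n}$-exponential growth. Granting those, the remaining argument is the elementary combinatorial fact that the staircase sequence $m\mapsto\sum_{\ell}\min(\beta_{\ell},m)$ (respectively $e\mapsto\sum_{i}\min(\alpha_{i},e)$) determines the underlying partition of exponents.
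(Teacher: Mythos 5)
The paper itself gives no argument for this proposition: it simply cites Greenberg \cite{greenberg89} together with Lemmas 2.1, 2.2 and Proposition 2.3 of \cite[\S 2.1]{AhmedLim19}. Your write-up supplies a complete, self-contained proof by reduction to elementary modules via the structure theorem, which is in the same spirit as those references, and I believe it is correct. The two delicate points you single out are handled correctly. For part (1), the identity $\corank_{\Zp}\bigl((U^\vee)_{F^m}\bigr)^\Gamma = \rank_{\Zp}\bigl(U/F^mU\bigr)$ does come out right once the two $\iota$'s are tracked: Pontryagin duality turns the contragredient action on $\Hom_{\cO}(\Lambda/F^m,\cO)$ into $\Lambda/(F^\iota)^m$, and the adjunction $(A\otimes_{\cO}B)_\Gamma\cong A\otimes_\Lambda B^\iota$ re-inverts, leaving $U\otimes_\Lambda\Lambda/F^m$. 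One phrasing to tighten: the "self-duality $\Hom_{\cO}(\Lambda/g,\cO)\cong\Lambda/g$" only holds for the \emph{naive} $\Lambda$-module structure $(\lambda\phi)(x)=\phi(\lambda x)$ (i.e. the $\Lambda/g$-module dual, which is free of rank one because $\cO[X]/g$ is Gorenstein/complete intersection); with the contragredient action one gets $\Lambda/g^\iota$. You clearly know this, since you then say the two $\iota$'s cancel, but the sentence as written is potentially misleading. For part (2), your Kummer-type estimate is right: for $1\le k\le p^n$ one has $v_p\binom{p^n}{k}=n-v_p(k)$, and since $k-\lfloor\log_p k\rfloor\ge 1$ for $k\ge1$, each term $\binom{p^n}{k}X^k$ of $\omega_n$ lies in $(\varpi,X)^{n+1}$; so $\omega_n$ eventually annihilates any quotient $\Lambda/(F_\ell^\beta,\varpi^e)$, making the $F_\ell$-elementary contributions bounded in $n$, while each $\Lambda/\varpi^\alpha$ contributes exactly $q^{\min(\alpha,e)p^n}$. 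The invariance under pseudo-isomorphism and the additivity over direct summands are also standard and correctly invoked. The remaining combinatorics (a partition is determined by $m\mapsto\sum_\ell\min(\beta_\ell,m)$, respectively $e\mapsto\sum_i\min(\alpha_i,e)$) is elementary and correct.
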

\begin{proof}
This result is essentially due to Greenberg \cite{greenberg89}.
The proofs follow from the arguments given in Lemmas 2.1, 2.2 and Proposition 2.3 in \cite[\S 2.1]{AhmedLim19}.
\end{proof}

\noindent For our purposes, the proposition will be applied to $U=\Sel_0\left(A_f(i)/\Qcyc\right)^\vee$ and $V=\Sel_0\left(A_\of(k-i)/\Qcyc\right)^{\vee,\iota}$.

\subsection{~}
Let us first introduce some notation.
Let $e$ be a fixed positive integer.
Let $\cM$ denote either $A_{f}(i)_{F^m}[\varpi^e]$ or $A_{f}(i)[\varpi^e]$, both of which are finite Galois modules of $p$-power order.
The Cartier dual $\cM^\dagger :=\Hom(\cM,\mu_{p^\infty})$ is
$A_{\of}(k-i)_{F^{\iota, m}}[\varpi^e]$ (resp. $ A_{\of}(k-i)[\varpi^e]$).
We check this for $\cM= A_f(i)[\varpi^e]$ (checking the claim for the other module is similar):
\begin{align*}
\left(A_f(i)[\varpi^e]\right)^{\dagger} & = \left(A_f(i)\right)^{\dagger}/\varpi^e\\
& = T_{\of}(k-i)/\varpi^e\\
 & = A_{\of}(k-i)[\varpi^e].\\
\end{align*}
%This is also a $\Gal(\QQ_S/\QQ)$-module.
Let $\K$ be either $\QQ$ or $L_n$.
For $j=1,2$, we define the maps
\begin{align*}
\lambda_{\cM}^{(j)}&: H^j \left( G_S(\K), \cM\right) \longrightarrow \bigoplus_{v\in S} H^j\left(\K_{v}, \cM \right)\\
\lambda_{\cM^{\dagger}}^{(j)}&: H^j \left( G_S(\K), \cM^{\dagger}\right) \longrightarrow \bigoplus_{v\in S} H^j\left(\K_{v}, \cM^{\dagger} \right).
\end{align*}
Set
\begin{align*}
 K_j = K_j(\cM) = \ker\left( \lambda_{\cM}^{(j)}\right), &\qquad K_j^{\dagger} = K_j(\cM^{\dagger}) = \ker\left( \lambda_{\cM^{\dagger}}^{(j)}\right),\\
 G_j = G_j(\cM) = \Image\left( \lambda_{\cM}^{(j)}\right),&\qquad G_j^{\dagger} = G_j(\cM^{\dagger}) = \Image\left( \lambda_{\cM^{\dagger}}^{(j)}\right).
\end{align*}
We note that $K_1$ (resp. $K_1^\dagger$) is the fine Selmer group of $\cM$ (resp. $\cM^\dagger$) over $\K$.

\begin{lemma}\label{lem:quotient}
We have the equality
\[
\frac{\absolute{K_1^{\dagger}}}{\absolute{K_1}}= \frac{ \absolute{G_1} \cdot \chi_{\glob}(\K,\cM)}{\absolute{H^0\left(G_S(\K),\cM\right)}\cdot \absolute{G_2}}.\]
\end{lemma}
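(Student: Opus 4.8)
The plan is to compute both sides via the long exact sequences in Galois cohomology coming from the Poitou--Tate nine-term exact sequence, or more directly by comparing global Euler characteristics with the exactness of the defining sequences of the various kernels and images. First I would write down the tautological short exact sequences
\[
0 \longrightarrow K_j \longrightarrow H^j(G_S(\K),\cM) \longrightarrow G_j \longrightarrow 0
\]
for $j=1,2$ (and note $K_0 = H^0(G_S(\K),\cM)$, $G_0 = H^0(G_S(\K),\cM)$ since $\lambda^{(0)}$ is injective as the map $H^0(G_S(\K),\cM)\hookrightarrow \bigoplus_v H^0(\K_v,\cM)$ is injective). Taking cardinalities and using that $\cM$ is a finite module of $p$-power order, the global Euler characteristic $\chi_{\glob}(\K,\cM) = \prod_{j\ge 0}\absolute{H^j(G_S(\K),\cM)}^{(-1)^j}$ (which is finite and well-defined since $\cd_p G_S(\K) = 2$) rearranges into
\[
\chi_{\glob}(\K,\cM) = \frac{\absolute{H^0(G_S(\K),\cM)}\cdot\absolute{K_1}^{-1}\absolute{G_1}^{-1}\cdot\absolute{K_2}\absolute{G_2}}{1},
\]
wait --- I would be careful with the alternating signs: $\absolute{H^1} = \absolute{K_1}\absolute{G_1}$ appears with exponent $-1$ and $\absolute{H^2} = \absolute{K_2}\absolute{G_2}$ with exponent $+1$, giving
\[
\chi_{\glob}(\K,\cM) = \absolute{H^0(G_S(\K),\cM)}\cdot \frac{\absolute{K_2}\cdot\absolute{G_2}}{\absolute{K_1}\cdot\absolute{G_1}}.
\]
Solving for $\absolute{K_1}$ yields $\absolute{K_1} = \absolute{H^0(G_S(\K),\cM)}\cdot\absolute{K_2}\cdot\absolute{G_2}\big/\big(\chi_{\glob}(\K,\cM)\cdot\absolute{G_1}\big)$.

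The key remaining input is a duality identification $\absolute{K_2} = \absolute{K_1^\dagger}\cdot\absolute{G_1^\dagger}\big/\big(\text{something}\big)$ --- more precisely, by Poitou--Tate global duality the group $K_2 = \Sha^2(\K,\cM)$ (the kernel of $H^2(G_S(\K),\cM)\to\bigoplus_v H^2(\K_v,\cM)$) is Pontryagin dual to $\Sha^1(\K,\cM^\dagger) = K_1^\dagger$, so $\absolute{K_2} = \absolute{K_1^\dagger}$. Substituting this in gives
\[
\frac{\absolute{K_1^\dagger}}{\absolute{K_1}} = \frac{\absolute{G_1}\cdot\chi_{\glob}(\K,\cM)}{\absolute{H^0(G_S(\K),\cM)}\cdot\absolute{G_2}},
\]
which is exactly the claimed formula. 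So the proof is really: (a) count along the two short exact sequences to express $\chi_{\glob}$ in terms of $\absolute{H^0}$, $\absolute{K_1}$, $\absolute{G_1}$, $\absolute{K_2}$, $\absolute{G_2}$; (b) invoke Poitou--Tate to get $\absolute{K_2} = \absolute{K_1^\dagger}$; (c) rearrange.

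I expect the main obstacle --- or at least the step requiring the most care --- to be the correct bookkeeping of which cohomology group is dual to which under Poitou--Tate, together with verifying that all the relevant groups are finite so that the cardinality manipulations are legitimate (this uses $p\ge 3$, that $S$ is finite, that $\cM$ is finite of $p$-power order, and the cohomological dimension bound $\cd_p G_S(\K)\le 2$; for the local terms one uses that $\K$ is a number field and $S$ finite so each $H^j(\K_v,\cM)$ is finite). One should also double-check that the map $\lambda_{\cM}^{(0)}$ is injective so that $K_0$ does not contribute, and that the $H^2$ local terms are accounted for correctly --- in the fine Selmer setting the localization at $j=2$ lands in $\bigoplus_{v\in S}H^2(\K_v,\cM)$ and the kernel $K_2 = \Sha^2$ is what duality controls. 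Once the finiteness and the duality pairing are pinned down, the identity is a one-line Euler-characteristic rearrangement, so I would present (a)--(c) crisply and relegate the finiteness remarks to a sentence citing $\cd_p$ and the standard local/global finiteness statements.
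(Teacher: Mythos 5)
Your proposal is correct and takes essentially the same route as the paper: split $|H^j|=|K_j|\cdot|G_j|$ for $j=1,2$, plug into the global Euler characteristic formula, and invoke Poitou–Tate global duality in the form $|K_1^\dagger|=|K_2|$ (the paper cites Tate's global duality theorem for this identification), then rearrange. The paper streamlines the bookkeeping slightly by writing $|K_1^\dagger|=|K_2|=|H^2|/|G_2|$ and substituting $|H^2|=\chi_{\glob}\,|H^1|/|H^0|$ directly, but this is the same calculation you carried out.
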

\begin{proof}
By global duality (see \cite[Theorem 3.1(a)]{Tat62}), we have
\[
\absolute{K_1^{\dagger}} = \absolute{K_2} = \frac{\absolute{H^2\left(G_S(\K),\cM\right)}}{\absolute{G_2}}.
\]
The global Euler characteristic formula states that
\[
\chi_{\glob}\left(\K,\mathcal{M}\right) =\prod_{i=0}^{2} \absolute{H^j\left( G_S(\K), \cM\right)}^{(-1)^{i}}.
\]
This allows us to deduce
\begin{align*}
\absolute{K_1^{\dagger}} &= \frac{\absolute{H^1\left(G_S(\K),\cM\right)} \cdot \chi_{\glob}(\cM)}{\absolute{H^0\left(G_S(\K),\cM\right)}\cdot \absolute{G_2}} \\
&= \frac{\absolute{K_1} \cdot \absolute{G_1} \cdot \chi_{\glob}(\cM)}{\absolute{H^0\left(G_S(\K),\cM\right)}\cdot \absolute{G_2}},
\end{align*}
as required.
\end{proof}

\subsection{Proof of Theorem~\ref{main corollary}\eqref{thm_A}}
Fix an integer $m\geq 1$ and take $\cM=A_f(i)_{F^m}[\varpi^e]$ and $\K=\QQ$.
We work under the hypotheses (H-base$_F$).

\begin{lemma}\label{lem:quotientA}
With the notation of Lemma~\ref{lem:quotient},
\[
\frac{\absolute{K_1^{\dagger}}}{\absolute{K_1}}\sim_e \absolute{\Image\left(\theta_{F,m,e}\right)} \cdot \chi_{\glob}\left(\QQ,\cM\right).
\]
\end{lemma}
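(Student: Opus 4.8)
The plan is to start from the exact identity proven in Lemma~\ref{lem:quotient},
\[
\frac{\absolute{K_1^{\dagger}}}{\absolute{K_1}}= \frac{ \absolute{G_1} \cdot \chi_{\glob}(\QQ,\cM)}{\absolute{H^0\left(G_S(\QQ),\cM\right)}\cdot \absolute{G_2}},
\]
and show that, up to a factor bounded independently of $e$, the quotient $\absolute{G_1}/(\absolute{H^0(G_S(\QQ),\cM)}\cdot\absolute{G_2})$ equals $\absolute{\Image(\theta_{F,m,e})}$. Recall that $G_1 = \Image(\lambda_{\cM}^{(1)})$ is the image of the global-to-local map into $\bigoplus_{v\in S}H^1(\QQ_v,\cM)$ for $\cM = A_f(i)_{F^m}[\varpi^e]$, whereas $\theta_{F,m,e}$ is the localization map into $H^1(\Qp,A_f(i)_{F^m}[\varpi^e])$ at the single prime $p$. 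So the first main step is to compare $\absolute{G_1}$ with $\absolute{\Image(\theta_{F,m,e})}$: the projection $\bigoplus_{v\in S}H^1(\QQ_v,\cM)\to H^1(\Qp,\cM)$ sends $G_1$ onto $\Image(\theta_{F,m,e})$ (the two maps agree after projecting to the $p$-component), so $\absolute{G_1} = \absolute{\Image(\theta_{F,m,e})}\cdot\absolute{G_1\cap\bigoplus_{v\in S,\,v\ne p}H^1(\QQ_v,\cM)}$, and it suffices to bound the contribution of the primes $v\ne p$ (including $v\mid N$ and the archimedean place) independently of $e$. For $v\mid N$ this uses local Tate duality together with finiteness of $H^0(\QQ_v,A_f(i)_{F^m})$, which is exactly the content of Lemma~\ref{lem:finiteness} under (H-base$_F$); for the archimedean prime one uses that $p$ is odd.

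The second main step is to control $\absolute{H^0(G_S(\QQ),\cM)}$ and $\absolute{G_2}$ up to $\sim_e$. For $H^0$: from the short exact sequence \eqref{eq:SES-pe} with $j=1$ and $G=G_S(\QQ)$ acting on $A_f(i)_{F^m}$, the group $H^0(G_S(\QQ),\cM)$ is an extension of $H^0(G_S(\QQ),A_f(i)_{F^m})[\varpi^e]$ by $H^{-1}$-type data, and by the remark following Lemma~\ref{lemma: Imai-like} the group $H^0(\Qcyc,A_f(i)_{F^m})$, hence $H^0(G_S(\QQ),A_f(i)_{F^m})$, is finite; therefore $\absolute{H^0(G_S(\QQ),\cM)}$ is bounded independently of $e$. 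For $G_2$: by global duality (as already invoked) $G_2$ is dual to the kernel $K_0^\dagger$-type term, but more directly $H^2(G_S(\QQ),\cM)$ injects into $\bigoplus_{v}H^2(\QQ_v,\cM)$ with $G_2$ its image, and by local duality $H^2(\QQ_v,\cM)\cong H^0(\QQ_v,\cM^\dagger)^\vee = H^0(\QQ_v,A_\of(k-i)_{F^{\iota,m}}[\varpi^e])^\vee$; at $v\mid N$ this is bounded independently of $e$ by (H-base$_F$) again (via Lemma~\ref{lem:finiteness} applied to $\of$), at the archimedean place it vanishes since $p$ is odd, and at $v=p$ one checks $H^2(\Qp,\cM)$ is itself bounded independently of $e$ because $H^0(\Qp,A_\of(k-i)_{F^{\iota,m}})$ is finite (this follows from Lemma~\ref{lemma: Imai-like}, since $H^0(\Qp,-)\subset H^0(\Qp(\mu_{p^\infty}),-)$).

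Combining, the identity of Lemma~\ref{lem:quotient} becomes
\[
\frac{\absolute{K_1^{\dagger}}}{\absolute{K_1}} = \absolute{\Image(\theta_{F,m,e})}\cdot\chi_{\glob}(\QQ,\cM)\cdot c_e,
\]
where $c_e = \absolute{G_1\cap\bigoplus_{v\ne p}H^1(\QQ_v,\cM)}\cdot\absolute{G_2}^{-1}\cdot\absolute{H^0(G_S(\QQ),\cM)}^{-1}$ is a ratio of quantities each bounded above and below by constants depending only on $m$ (not on $e$), hence $c_e\sim_e 1$; this yields the asserted $\sim_e$ equivalence. I expect the main obstacle to be the careful bookkeeping of the local terms at $v\mid N$: one must verify that the finiteness statements of (H-base$_F$)/Lemma~\ref{lem:finiteness}, which are stated for $A_f(i)_{F^m}$ and its conjugate \emph{before} taking $\varpi^e$-torsion, propagate to uniform-in-$e$ bounds on $\absolute{H^i(\QQ_v,A_f(i)_{F^m}[\varpi^e])}$ for $i=0,1,2$ — this is where Lemma~\ref{bound for local kernel} (boundedness of $N/\varpi^e$ for cofinitely generated $\cO$-modules $N$) and local Tate duality must be deployed together, treating $H^0(\QQ_v,A_f(i)_{F^m})$ as the relevant cofinitely generated $\cO$-module.
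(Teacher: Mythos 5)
Your proof is correct and follows essentially the same route as the paper: start from Lemma~\ref{lem:quotient}, show $\absolute{H^0(G_S(\QQ),\cM)}\sim_e1$ and $\absolute{G_2}\sim_e1$ via Lemmas~\ref{lemma: Imai-like} and~\ref{lem:finiteness} plus local Tate duality, and reduce the comparison of $\absolute{G_1}$ with $\absolute{\Image(\theta_{F,m,e})}$ to bounding $H^1(\QQ_v,\cM)$ at $v\mid N$. One small imprecision: where you invoke ``local Tate duality'' to bound $H^1(\QQ_v,\cM)$ at $v\mid N$, the tool you actually need is the local Euler characteristic formula $\absolute{H^1(\QQ_v,\cM)}=\absolute{H^0(\QQ_v,\cM)}\cdot\absolute{H^2(\QQ_v,\cM)}$ (valid since $v\neq p$ and $\cM$ is a $p$-group), with Tate duality then applied to $H^2$; this is exactly what the paper does.
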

\begin{proof}
By Lemma~\ref{lemma: Imai-like}, we know that $\absolute{H^0\left(G_S(\K),\cM\right)}\sim_e1$ since \[H^0\left(G_S(\QQ),\cM\right)\subset H^0\left(G_S(\Qcyc),A_f\right)\otimes_{\cO}\Lambda/F^m(i).\]
To bound $\absolute{G_2}$, it is enough to observe that \[G_2\subset\bigoplus_{v\in S} H^2\left(\QQ_v,\cM\right)\cong \bigoplus_{v\in S} H^0\left(\QQ_v,\cM^\dagger\right)^\vee,\]
where the isomorphism is the local Tate duality.
But
\[
H^0\left(\QQ_v,\cM^\dagger\right)\subset H^0\left(\QQ_v,A_\of(k-i)_{F^{\iota,m}}\right),
\]
which is finite for all $v$ by Lemmas~\ref{lemma: Imai-like} and \ref{lem:finiteness}.
Therefore, $\absolute{G_2}\sim_e1$.

It remains to compare $\absolute{G_1}$ with $\absolute{\Image\left(\theta_{F,m,e}\right)}$.
Here, $\theta_{F,m,e}$ is the composition
\[
H^1\left(G_S(\QQ),\cM\right)\stackrel{\lambda_\cM^{(1)}}{\longrightarrow}\bigoplus_{v\in S}H^1\left(\QQ_v,\cM\right)\longrightarrow H^1\left(\Qp,\cM\right),
\]
where the last arrow is given by the projection map.
Therefore, it is enough to bound $H^1\left(\QQ_v,\cM\right)$ for $v|N$.
Since $\cM$ is a $p$-group, the local Euler characteristic formula gives
\begin{align*}
\absolute{H^1\left(\QQ_v,\cM\right)}&=\absolute{H^0\left(\QQ_v,\cM\right)}\absolute{H^2\left(\QQ_v,\cM\right)}\\
&\le \absolute{H^0\left(\QQ_v,A_f(i)_{F^m}\right)}\absolute{H^0\left(\QQ_v,A_\of(k-i)_{F^{\iota,m}}\right)},
\end{align*}
which is bounded independently of $e$ by Lemma~\ref{lem:finiteness} under (H-base$_F$).
The result now follows.
\end{proof}

Recall from Proposition~\ref{prop:greenberg}(1) that we need to show
\[
\corank_{\Zp}\Sel_0\left(A_f(i)_{F^m}/\Qcyc\right)^{\Gamma}=\corank_{\Zp}\Sel_0\left(A_\of(k-i)_{F^{\iota,m}}/\Qcyc\right)^{\Gamma}
\]
is equivalent to $\Image(\theta_{F,m,e})\sim_e q^{e\deg(F^m)}$.
By the second control theorem (see Theorem~\ref{CT_2}(ii)) this is equivalent to show
\[
\corank_{\Zp}\Sel_0\left(A_f(i)_{F^m}/\QQ\right)=\corank_{\Zp}\Sel_0\left(A_\of(k-i)_{F^{\iota,m}}/\QQ\right)
\]
is equivalent to $\Image(\theta_{F,m,e})\sim_e q^{e\deg(F^m)}$.
It follows from the Structure Theorem of cofinitely generated $\cO$-modules that
\[
\absolute{\Sel_0\left(A_f(i)_{F^m}/\QQ\right)[\varpi^e]}\sim_e\absolute{\Sel_0\left(A_\of(k-i)_{F^{\iota,m}}/\QQ\right)[\varpi^e]}
\]
is equivalent to $\Image(\theta_{F,m,e})\sim_e q^{e\deg(F^m)}$.
On applying Theorem~\ref{CT_1} (with $n=0$ and $e$ varying), it is enough to show that
\begin{equation}\label{eq:required-A}
 \absolute{\Sel_0\left(A_f(i)_{F^m}[\varpi^e]/\QQ\right)}\sim_e\absolute{\Sel_0\left(A_\of(k-i)_{F^{\iota,m}}[\varpi^e]/\QQ\right)}
\end{equation}
is equivalent to $\Image(\theta_{F,m,e})\sim_e q^{e\deg(F^m)}$.
Since $\QQ$ is a totally real field, the global Euler characteristic formula tells us that
\[
\chi_{\glob}\left(\QQ,\cM\right)=\frac{1}{\absolute{\cM^-}}=\frac{1}{q^{e\deg( F^m)}},
\]
where $\cM^-$ denotes the $-1$-eigensubspace of the complex conjugation in $\cM$.
On combining this with Lemma~\ref{lem:quotientA}, we deduce \eqref{eq:required-A}, as required.
This concludes the proof of Theorem~\ref{main corollary}\eqref{thm_A}.
\qed

\subsection{Proof of Theorem~\ref{main corollary}\eqref{thm_B}}
Fix an integer $e\ge1$, set $\cM=A_f(i)[\varpi^e]$, and $\K=L_n$.
%We work under the hypotheses that for all integers $r\ge 1$, the polynomial $(1+X)^{p^r}-1$ is coprime to the $\Lambda$-characteristic ideal of $H^2_{\Iw}\left(G_S(\Qcyc), T_f(i)\right)$ and that $\Sel_0\left(A_\of(k-i)/\Qcyc\right)^\vee$ has zero $\mu$-invariant.
We have the following analogue of Lemma~\ref{lem:quotientA}.

\begin{lemma}\label{lem:quotientB}
With the notation of Lemma~\ref{lem:quotient},
\[
\frac{\absolute{K_1^{\dagger}}}{\absolute{K_1}}\sim_n \absolute{\Image(\theta_{n,e})} \cdot \chi_{\glob}(L_n,\cM).
\]
\end{lemma}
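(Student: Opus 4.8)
The plan is to mimic the proof of Lemma~\ref{lem:quotientA}, replacing the role of $e$ by that of $n$ throughout, so that Lemma~\ref{lem:quotient} (with $\K=L_n$ and $\cM=A_f(i)[\varpi^e]$) reduces the claim to showing that the three ``correction terms'' $\absolute{H^0(G_S(L_n),\cM)}$, $\absolute{G_2}$, and the part of $\absolute{G_1}$ coming from primes away from $p$ are all $\sim_n 1$. First I would bound $\absolute{H^0(G_S(L_n),\cM)}$: since $\cM=A_f(i)[\varpi^e]$ one has $H^0(G_S(L_n),\cM)\subseteq H^0(L_n,A_f(i))[\varpi^e]\subseteq H^0(\Qcyc,A_f(i))$, and Lemma~\ref{lemma: Imai-like} (via its Remark) gives that the latter is finite, hence bounded independently of $n$. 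Next, for $\absolute{G_2}$ I would use local Tate duality $H^2(L_{n,v},\cM)\cong H^0(L_{n,v},\cM^\dagger)^\vee$ with $\cM^\dagger=A_\of(k-i)[\varpi^e]$, and note that for $v|p$ this injects into $H^0(\Qp(\mu_{p^\infty}),A_\of(k-i))$, which is finite by (the analogue of) Lemma~\ref{lemma: Imai-like}, while for $v|N$ it injects into $H^0(\QQ_{\cyc,v},A_\of(k-i))$; the finiteness of the latter — bounded independently of $n$ because $N$ is finitely decomposed in $\Qcyc$ — is exactly what is needed.

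The subtle point, and the place where the hypotheses really enter, is the control of $G_1$ away from the prime above $p$: here $\theta_{n,e}$ is the composite of $\lambda_\cM^{(1)}$ with the projection to $H^1(\Qpn,\cM)$, so $\absolute{G_1}/\absolute{\Image(\theta_{n,e})}$ is bounded by $\prod_{v\in S(L_n),\,v\nmid p}\absolute{H^1(L_{n,v},\cM)}$. By the local Euler characteristic formula for the finite $p$-group $\cM$, $\absolute{H^1(L_{n,v},\cM)}=\absolute{H^0(L_{n,v},\cM)}\cdot\absolute{H^2(L_{n,v},\cM)}$, and by the same local-duality argument as above both factors are dominated by $H^0$ terms of $A_f(i)$ or $A_\of(k-i)$ over $L_{n,v}$, which sit inside the corresponding $H^0$ over $\QQ_{\cyc,v}$; these are finite (for $v\nmid pN$ trivially, for $v|N$ by the finiteness discussed above, for archimedean $v$ trivially since $p$ is odd) and, crucially, their product over the finitely many $v\in S(L_n)$ is bounded uniformly in $n$ because each prime of $\QQ$ has boundedly many places above it in $\Qcyc$. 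Assembling these estimates in Lemma~\ref{lem:quotient} yields
\[
\frac{\absolute{K_1^\dagger}}{\absolute{K_1}}=\frac{\absolute{G_1}\cdot\chi_{\glob}(L_n,\cM)}{\absolute{H^0(G_S(L_n),\cM)}\cdot\absolute{G_2}}\sim_n\absolute{\Image(\theta_{n,e})}\cdot\chi_{\glob}(L_n,\cM),
\]
which is the asserted identity.

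I expect the main obstacle to be verifying that the finiteness of the local $H^0$-terms at primes $v|N$ is genuinely \emph{uniform} in $n$ — i.e.\ that one may replace $H^0(L_{n,v},A_g(j))$ by the $n$-independent group $H^0(\QQ_{\cyc,v},A_g(j))$ and still get a bound independent of $n$. This is where one uses that $v|N$ is finitely (indeed, for $p$ odd and $v\nmid p$, almost always trivially) decomposed in $\Qcyc/\QQ$, so that the number of places of $L_n$ above a given rational prime stabilizes; combined with $H^0(L_{n,v},A_g(j))\subseteq H^0(\QQ_{\cyc,v},A_g(j))$, which is finite by the standing hypotheses (no (H-base$_F$) is needed here since there is no $\Lambda/F^m$-twist, as noted in the Remark after Theorem~\ref{CT_2}), this delivers the required uniform bound. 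Everything else is a routine transcription of the argument for Lemma~\ref{lem:quotientA} with $e\leftrightarrow n$.
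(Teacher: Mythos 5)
Your overall scheme — using Lemma~\ref{lem:quotient} with $\K=L_n$, $\cM=A_f(i)[\varpi^e]$, and then showing that $\absolute{H^0(G_S(L_n),\cM)}$, $\absolute{G_2}$, and the contributions to $\absolute{G_1}$ from places away from $p$ are all $\sim_n 1$ — matches the paper. But your treatment of the local terms at $v\nmid p$ contains a genuine gap.

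For $\absolute{G_2}$ and for $\absolute{H^1(L_{n,v},\cM)}$ with $v\mid N$, you pass to $H^0(\QQ_{\cyc,v},A_\of(k-i))$ (dropping the $[\varpi^e]$-torsion) and then invoke its \emph{finiteness}. That finiteness is not automatic — it is precisely hypothesis~(H-cyc), which is \emph{not} assumed in Theorem~\ref{main corollary}(ii), and you yourself note a few lines later (correctly, citing the Remark after Theorem~\ref{CT_2}) that no such hypothesis is needed here. So as written, your bound at $v\mid N$ is circular: it appeals to a finiteness that is neither a standing hypothesis nor derivable from what you have. The correct observation, and the one the paper uses, is much simpler and requires no hypothesis at all: $\cM^\dagger = A_\of(k-i)[\varpi^e]$ is itself a finite group of order $q^{2e}$ (since $A_\of\cong(K/\cO)^{2}$ as an $\cO$-module), so for \emph{every} local place one has $\absolute{H^0(L_{n,v},\cM^\dagger)}\le q^{2e}$ trivially, and likewise $\absolute{H^0(L_{n,v},\cM)}\le q^{2e}$; the local Euler characteristic formula then gives $\absolute{H^1(L_{n,v},\cM)}\le q^{4e}$ for $v\nmid p$. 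Combined with the fact that the number of places of $L_n$ above $S$ is bounded as $n$ varies (finite decomposition in $\Qcyc$), this yields the required $\sim_n 1$ bounds with no appeal to $H^0$ over $\Qcyc$.

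Your first bound on $\absolute{H^0(G_S(L_n),\cM)}$ via inclusion into $H^0(G_S(\Qcyc),A_f(i))$ and the Remark after Lemma~\ref{lemma: Imai-like} is fine (and agrees with the paper), and once the local bound is replaced by the trivial $q^{2e}$ bound the rest of the assembly in Lemma~\ref{lem:quotient} goes through as you describe.
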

\begin{proof}
By Lemma~\ref{lemma: Imai-like}, we know that $\absolute{H^0\left(G_S(\K),\cM\right)}\sim_n1$ since \[H^0\left(G_S(L_n),\cM\right)\subset H^0\left(G_S(\Qcyc),A_f(i)\right).\]
To bound $\absolute{G_2}$, observe that \[G_2\subset\bigoplus_{v\in S(L_n)} H^2(L_{n,v},\cM)\cong \bigoplus_{v\in S(L_n)} H^0\left(L_{n,v},\cM^\dagger\right)^\vee\]
by local Tate duality.
But $H^0\left(L_{n,v},\cM^\dagger\right)$ has at most $q^{2e}$ elements and the number of primes above $S$ are bounded as $n$ varies.
Therefore, $\absolute{G_2}\sim_n1$.

It remains to compare $\absolute{G_1}$ with $\absolute{\Image\left(\theta_{n,e}\right)}$.
Note that $\theta_{n,e}$ is the composition
\[
H^1\left(G_S(F_n),\cM\right)\stackrel{\lambda_\cM^{(1)}}{\longrightarrow}\bigoplus_{v\in S(L_n)}H^1\left(L_{n,v},\cM\right)\longrightarrow H^1\left(L_{n,p},\cM\right),
\]
where the last arrow is given by the projection map.
Since $\cM$ is a $p$-group, for $v\nmid p$, the local Euler characteristic formula gives
\[
\absolute{H^1\left(L_{n,v},\cM\right)}=\absolute{H^0\left(L_{n,v},\cM\right)}\absolute{H^2\left(L_{n,v},\cM\right)}= \absolute{H^0\left(L_{n,v},\cM\right)}\absolute{H^0\left(L_{n,v},\cM^\dagger\right)},
\]
which is bounded independently of $q^{4e}$.
Therefore, the result follows.
\end{proof}

Recall from Proposition~\ref{prop:greenberg}(2) that we need to show
\[
\absolute{\left(\Sel_0\left(A_f(i)/\Qcyc\right)[\varpi^e]\right)^{\Gamma_n}}\sim_n\absolute{\left(\Sel_0\left(A_\of(k-i)/\Qcyc\right)[\varpi^e]\right)^{\Gamma_n}}
\]
if and only if $\absolute{\Image(\theta_{n,e})}\sim_n q^{ep^n}$.
In view of %the control theorems (
Theorem~\ref{CT_1} (with $\cF=\Qcyc$) and Theorem~\ref{CT_2}(i)), this amounts to showing
\begin{equation}\label{eq:required-B}
 \absolute{\Sel_0\left(A_f(i)[\varpi^e]/L_n\right)}\sim_n\absolute{\Sel_0\left(A_\of(k-i)[\varpi^e]/L_n\right)}
\end{equation}
if and only if $\absolute{\Image(\theta_{n,e})}\sim_n q^{ep^n}$.
The global Euler characteristic formula tells us that
\[
\chi_{\glob}\left(L_n,\cM\right)=\frac{1}{\absolute{\cM^-}}=\frac{1}{q^{ep^n}},
\]
since $L_n$ is a totally real field.
Therefore, from Lemma~\ref{lem:quotientB} we deduce that \eqref{eq:required-B} holds if and only if the required growth condition is satisfied.
This concludes the proof of Theorem~\ref{main corollary}\eqref{thm_B}.
\qed

\begin{comment}
\subsection{Proof of Theorem~\ref{thm_C}}
Suppose that (H-cyclotomic) holds.
In particular, (H-base$_F$) holds for all irreducible distinguished polynomial $F\in \Lambda$.
Suppose that $\Sel_0\left(A_\of(k-i)/\Qcyc\right)^\vee$ is pseudo-null.

%For our reference only: It appears from the recent preprint of Longo-Vigni that pseudonullity will hold whenever the BK Selmer is trivial over the base field and we can show that H^1_{ur} = divisible part of H^1_{ur} for the bad primes
Firstly, $F^\iota$ does not divide $\Char_\Lambda\Sel_0\left(A_\of(k-i)/\Qcyc\right)^\vee$ for all irreducible distinguished polynomial $F$.
Therefore, by Theorem~\ref{main corollary}\eqref{thm_A} we know that $F \nmid \Char_\Lambda\Sel_0\left(A_f(i)/\Qcyc\right)^\vee$.

Secondly, under (H-cyclotomic), the direct summands in the third term of the exact sequence \eqref{eq:fine-H2} are finite.
This implies that they have trivial characteristic ideals.
Consequently,
\[
\Char_\Lambda\Sel_0\left(A_\of(k-i)/\Qcyc\right)^\vee=\Char_\Lambda H^2_{\Iw}\left(G_S(\Qcyc),T_f(i)\right).
\]
But $\Sel_0\left(A_\of(k-i)/\Qcyc\right)^\vee$ is assumed to be pseudo-null.
In particular, $(1+X)^{p^r}-1$ does not divide $\Char_\Lambda H^2_{\Iw}\left(G_S(\Qcyc),T_f(i)\right)$ for all $r\ge1$ and the $\mu$-invariant of $\Sel_0\left(A_\of(k-i)/\Qcyc\right)^\vee$ is zero.
Therefore, Theorem~\ref{main corollary}\eqref{thm_B} implies that the $\mu$-invariant of $\Sel_0\left(A_f(i)/\Qcyc\right)^\vee$ is zero.

Thus, $\Sel_0\left(A_f(i)/\Qcyc\right)^\vee$ is pseudo-null.
By symmetry, the opposite implication holds.
The proof of Theorem~\ref{thm_C} is complete.\qed
\end{comment}

\section{Computing invariants of local Galois representations}
\label{appendix}
We write
\[
\rho_f \colon G_\Q \rightarrow \mathrm{GL}_2(K)
\]
for the representation realized by $V_f$.
Recall that we are using the geometric normalization, so that the determinant of $\rho_f$ is given by $\epsilon_p^{1-k}\omega^{-1}$, where $\epsilon_p$ is the $p$-cyclotomic character, and $\omega$ is the nebentypus character of $f$ of conductor $M$.
We will write $\rho_{f,i} := \epsilon_p^i \otimes \rho_f$ for the $i$-th Tate twist of $\rho_f$.

For any rational prime $\ell$, write $\Q_\ell$ for the completion of $\Q$ at $\ell$, and write $\Q^\infty_\ell$ for the cyclotomic $\Z_p$-extension of $\Q_\ell$ (which we may identify with $\Q_{\cyc,v}$, where $v$ is a place of $\Qcyc$ lying above $\ell$).
We denote by $G_\ell$ and $G_\ell^\infty$ their respective absolute Galois groups.
In this section, we explain how to verify (in some simple cases) the following condition:

\vspace{1.0ex}
\noindent \textbf{(H-cyc')} For all primes $\ell | N$, we have $H^0\left( \Q_\ell^\infty, \rho_{g,j}\right)=0$ for both $(g,j)=(f,i)$ and $(\overline{f},k-i)$.

\vspace{1.0ex}

Note that (H-cyc') is equivalent to (H-cyc) (since a non-trivial subspace in the $K$-vector space $H^0(\QQ_\ell^\infty,\rho_{g,j})$ gives an infinite divisible group inside $H^0\left(\QQ_\ell^\infty,A_g(j)\right)$, and vice versa).
The purpose of this section is to show that (H-cyc) is not too strong a hypothesis.

When referring to this hypothesis, we will often just refer to the pair $(f,i)$, which then determines the other pair $(\overline{f},k-i)$.

Rather than give a complete study of (H-cyc'), we seek only to illustrate that it does hold quite often, and that in some situations it can be verified easily.
To this end, we make the additional simplifying assumptions that $f$ is a newform and that $N$ is square-free.
(These assumptions simplify the classification of the local Galois representations $\rho_{f,i}|_{G_\ell}$ via the local Langlands correspondence by eliminating the possibility of supercuspidal representations.)

Let $\phi$ denote the Euler totient function.
We aim to prove the following result.

\begin{theorem}\label{prop:main}
Let $p$ be an odd prime and $N$ be a square-free integer coprime to $p$.
Let $f \in S_{k}(\Gamma_0(N),\omega)$ be a newform with nebentypus character $\omega$ of conductor $M$.
Let $0 \leq i \leq k$ be an integer.
For a rational prime $\ell$, let $m_\ell$ denote the order of $\ell$ in $(\Z/p\Z)^\times$.
Suppose that for each $\ell | N$ the following hold:
%\begin{enumerate}
\item[(i)] $\ell \not\equiv 1 \mod p$,
\item[(ii)] if $\ell | M$, then $m_\ell$ does not divide $(1-k+i)$ or $(1-i)$, and

\item[(iii)] if $\ell | \frac{N}{M}$, then $\gcd(m_\ell, \phi(M))=1$ and $m_\ell$ does not divide $k$ or $(k-2)$.

\noindent Then (H-cyc') is satisfied.
\end{theorem}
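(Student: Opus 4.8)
The plan is to verify (H-cyc') one prime $\ell\mid N$ at a time, reducing the vanishing of $H^0\!\big(\Q_\ell^\infty,\rho_{g,j}\big)$ to a concrete statement about a single Frobenius eigenvalue. Since $\ell\neq p$, the cyclotomic $\Z_p$-extension $\Q_\ell^\infty/\Q_\ell$ is the unramified $\Z_p$-extension: its inertia group is all of $I_\ell$, and its layer of degree $p^n$ is the unramified extension of that degree, with Frobenius $\mathrm{Frob}_\ell^{\,p^n}$. Hence $H^0\!\big(\Q_\ell^\infty,\rho_{g,j}\big)=\bigcup_{n\geq 0}\big(\rho_{g,j}^{I_\ell}\big)^{\mathrm{Frob}_\ell^{\,p^n}}$, which is nonzero precisely when $\mathrm{Frob}_\ell$ acts on the inertia invariants $\rho_{g,j}^{I_\ell}$ with an eigenvalue that is a root of unity of $p$-power order. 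So it suffices to show that, for $(g,j)=(f,i)$ and for $(g,j)=(\of,k-i)$, no such eigenvalue occurs.

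I would then make $\rho_f|_{G_\ell}$ explicit through the local Langlands correspondence, using that $N$ is square-free and $f$ a newform to rule out supercuspidal local components at $\ell$: when $\ell\mid M$ the representation is a ramified principal series, the direct sum of an unramified character and a ramified character of conductor $\ell$; when $\ell\mid N/M$ it is an unramified twist of the Steinberg representation. In either case $\rho_f^{I_\ell}$ is one-dimensional, so $\rho_{f,i}^{I_\ell}=\epsilon_p^{\,i}|_{G_\ell}\otimes\rho_f^{I_\ell}$ (note $\epsilon_p$ is unramified at $\ell$) is one-dimensional with a Frobenius eigenvalue $\lambda_{f,i,\ell}$ that I would write out explicitly in terms of $a_\ell(f)$, the value $\omega(\ell)$ of the nebentypus (or the tame character $\omega_\ell$ when $\ell\mid M$), and a power of $\ell$ coming from the Tate twist and from the relation $\det\rho_f=\epsilon_p^{1-k}\omega^{-1}$. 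The pair $(\of,k-i)$ is treated symmetrically — $\of$ has nebentypus $\omega^{-1}$ and Hecke eigenvalues $\overline{a_\ell(f)}$ — or, more efficiently, through the duality $\rho_{\of}(k-i)\cong\rho_{f,i}^{\vee}(1)$ coming from the pairing $T_f(i)\times T_{\of}(k-i)\to\cO(1)$, which relates $\lambda_{\of,k-i,\ell}$ to $\lambda_{f,i,\ell}$ and the second Frobenius eigenvalue of $\rho_{f,i}|_{G_\ell}$.

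The final step is to rule out $\lambda_{g,j,\ell}\in\mu_{p^\infty}$ using hypotheses (i)--(iii). A $p$-power root of unity is an algebraic integer, an $\ell$-adic unit, of complex absolute value $1$, and congruent to $1$ modulo the maximal ideal of $\cO$ (here $p$ is odd). Condition (i), $\ell\not\equiv1\bmod p$, is what ensures the ramified characters occurring above stay ramified after reduction modulo $\varpi$ — their conductor at $\ell$ is prime to $p$ — so the residual inertia invariants remain exactly one-dimensional, and it also gives $m_\ell\geq 2$. The $\ell$-adic and archimedean size constraints already dispose of $\lambda_{g,j,\ell}$ in all but a small number of borderline Tate twists; for those the eigenvalue is, up to sign, a power of $\ell$ times a value of the nebentypus, and the requirement $\lambda_{g,j,\ell}\equiv1\pmod\varpi$ becomes an identity in $\F_p^\times$ between a power of $\bar\ell$ (of order $m_\ell$) and a value of $\bar\omega$ (of order dividing $\phi(M)$). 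When $\ell\mid M$ this forces the divisibilities $m_\ell\mid(1-k+i)$ (from the pair $(f,i)$) and $m_\ell\mid(1-i)$ (from $(\of,k-i)$), which (ii) excludes; when $\ell\mid N/M$ the coprimality $\gcd(m_\ell,\phi(M))=1$ decouples the $\bar\ell$-part from the $\bar\omega$-part, leaving the conditions $m_\ell\mid k$ or $m_\ell\mid(k-2)$ that (iii) excludes. The main obstacle is precisely this last computation: pinning down the exact power of $\ell$ and the exact root of unity in $\lambda_{g,j,\ell}$ requires careful bookkeeping with the normalization of the local Langlands correspondence and with the (non-canonical) Frobenius value of the ramified summand in the principal-series case, and one must treat uniformly the twists where the size estimates degenerate, so that the congruence conditions (i)--(iii) indeed suffice.
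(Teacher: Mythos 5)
Your high-level reduction is sound: since $\Q_\ell^\infty/\Q_\ell$ is unramified (as $\ell\neq p$), $H^0(\Q_\ell^\infty,\rho_{g,j})=\bigcup_n\big(\rho_{g,j}^{I_\ell}\big)^{\mathrm{Frob}_\ell^{p^n}}$, so the question becomes whether $\mathrm{Frob}_\ell$ acts on the inertia invariants with a $p$-power root of unity as eigenvalue, and the local Langlands classification of $\rho_f|_{G_\ell}$ at primes $\ell\Vert N$ (ramified principal series when $\ell\mid M$, twist of Steinberg when $\ell\mid N/M$) is the right framework. But the execution diverges from the paper in ways that leave real gaps, and the crucial computation is left as an acknowledged ``main obstacle.''

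First, you misidentify the role of condition (i). Nothing is ever reduced modulo $\varpi$ in this argument; the vanishing of $H^0$ is a statement about the $K$-linear representation, not a residual one, so ``ramified characters staying ramified after reduction'' is beside the point. What $\ell\not\equiv 1\pmod p$ actually buys (Lemma~\ref{lem:no-p-roots-of-unity} and Lemma~\ref{lem:cyclotomic-char-not-trivial} in the paper) is that $\Q_\ell^\infty$ contains no $p$-power roots of unity, hence $\epsilon_p|_{G_\ell^\infty}$ has exact order $m_\ell$. This is the engine of the whole argument, and it is missing from your plan.

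Second, your proposed ``size'' argument (the eigenvalue is an $\ell$-adic unit, has complex absolute value $1$, etc.) does not cleanly apply. The eigenvalues live in $\overline K\subset\overline{\Q}_p$; ``$\ell$-adic unit'' requires a choice of embedding into $\overline{\Q}_\ell$ and is not intrinsic. More importantly, the Frobenius eigenvalue of the unramified character $\chi_2$ in the principal-series case is controlled by $a_\ell(f)$, whose archimedean absolute value is $\ell^{(k-2)/2}$; it is \emph{not} obviously a root of unity times a power of $\ell$, and the archimedean estimate alone does not yield the precise divisibility $m_\ell\nmid(1-k+i)$. The paper sidesteps all of this: it never computes $\chi_2(\mathrm{Frob}_\ell)$ as an algebraic number. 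Instead, using the determinant identity $\chi_1\chi_2=\epsilon_p^{1-k}\omega^{-1}$ and the fact that $\chi_1$ and $\omega$ have matching ramification at $\ell$, one identifies $\chi_2|_{G_\ell^\infty}$ with $\epsilon_p^{1-k}|_{G_\ell^\infty}$, and then the question is purely whether $\epsilon_p^{1-k+i}|_{G_\ell^\infty}$ is trivial, which Lemma~\ref{lem:cyclotomic-char-not-trivial} answers as $m_\ell\mid(1-k+i)$. The Steinberg case is handled the same way using $\gcd(m_\ell,\phi(M))=1$ to decouple $\omega|_{G_\ell^\infty}$ from $\epsilon_p|_{G_\ell^\infty}$. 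Your plan reinvents this as a congruence mod $\varpi$ on an explicitly computed eigenvalue, which is conceptually equivalent in the end but strictly harder, and the step you flag as the ``main obstacle'' --- pinning down the exact eigenvalue --- is precisely the point the paper is structured to avoid. To make your route rigorous you would still have to prove the determinant identity and the ramification-matching of $\chi_1$ with $\omega^{-1}$, which is the actual content; without it the proposal is a plan, not a proof.
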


\begin{proof}
Under these assumptions, the fact that $H^0(\Q_\ell^\infty,\rho_{f,i})=0$ and $H^0(\Q_\ell^\infty,\rho_{\bar{f},k-i})=0$ for each $\ell | N$ follows from Propositions \ref{prop:principal-series} and \ref{prop:special} below.
\end{proof}

\begin{remark} If $\ell$ has large order in $(\Z/p\Z)^\times$ and $p \gg 3Mk$, then all three conditions will be met (except for possibly when $k=2$ or $i=1$).
In fact, since the value of $m_\ell$ is related to the splitting behavior of $\ell$ in $\Q(\mu_p)$, the Chebotarev density theorem implies that a positive density of primes $p$ can be used.
\end{remark}

\begin{remark}
We emphasize that Theorem \ref{prop:main} only gives \textit{sufficient} conditions for (H-cyc') to hold.
We certainly do not expect all of the assumptions herein, especially the square-free level hypothesis, to be necessary.
In any case, examples which can be verified using Theorem \ref{prop:main} abound, and we write down a few concrete examples at the end.
It is easy to find many more using a computer algebra system such as SageMath \cite{sage}.
\end{remark}

\subsection{A note on the cyclotomic character over $\Q_\ell^\infty$}\label{sec:cyclotomic}

Fix an odd prime $p$ and a prime $\ell \neq p$.
We are interested in the absolute Galois group of the field $\Q_\ell^\infty$, which is the cyclotomic $\Z_p$-extension of $\Q_\ell$.
Let us make the following elementary observation.

\begin{lemma}
\label{lem:no-p-roots-of-unity}
If $\ell \not\equiv 1 \mod p$, then $\Q_\ell^\infty$ contains no $p$-power roots of unity.
\end{lemma}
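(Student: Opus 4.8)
The plan is to reduce the claim about arbitrary $p$-power roots of unity to the single case of $\mu_p$, and then to use that $\Q_\ell^\infty/\Q_\ell$ is an unramified $\Zp$-extension.

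First I would note that it suffices to prove $\mu_p\not\subset\Q_\ell^\infty$: for any $k\ge 1$ the group $\mu_{p^k}$ contains $\mu_p$, so the presence of a primitive $p^k$-th root of unity in $\Q_\ell^\infty$ would already force a primitive $p$-th root of unity to lie there. Second, I would record the structural input on the field $\Q_\ell^\infty$. By definition it is the completion at a place above $\ell$ of the cyclotomic $\Zp$-extension $\Q_{\cyc}/\Q$; since $\Q_{\cyc}\subset\Q(\mu_{p^\infty})$ is ramified only at $p$ (and the archimedean place), and $\ell\ne p$, the extension $\Q_\ell^\infty/\Q_\ell$ is unramified, and being a (nontrivial) quotient of $\Gal(\Q_{\cyc}/\Q)\cong\Zp$ it is a $\Zp$-extension. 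Consequently every finite subextension of $\Q_\ell^\infty/\Q_\ell$ has degree a power of $p$.

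The argument then finishes by contradiction: assume $\mu_p\subset\Q_\ell^\infty$, so that $\Q_\ell(\mu_p)\subseteq\Q_\ell^\infty$ is a finite subextension and hence $[\Q_\ell(\mu_p):\Q_\ell]$ is a power of $p$. On the other hand, because $\ell\ne p$, the extension $\Q_\ell(\mu_p)/\Q_\ell$ is unramified of degree equal to the multiplicative order $m_\ell$ of $\ell$ in $(\Z/p\Z)^\times$, which divides $p-1$ and is therefore prime to $p$. Hence $m_\ell=1$, i.e. $\ell\equiv 1\bmod p$, contrary to hypothesis.

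This is an entirely elementary verification; the only points that merit being spelled out are the reduction to $\mu_p$ and the identification of $\Q_\ell^\infty$ with (a subextension of) the unramified $\Zp$-extension of $\Q_\ell$, together with the computation $[\Q_\ell(\mu_p):\Q_\ell]=m_\ell$. I do not anticipate any genuine obstacle here; the lemma is a preparatory remark whose role is to make the subsequent local computations over $\Q_\ell^\infty$ tractable.
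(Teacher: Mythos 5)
Your proof is correct, but takes a slightly different route from the paper's. The paper argues directly at the level of residue fields: since $\Q_\ell^\infty/\Q_\ell$ is unramified (as $\ell\neq p$), $p$-power roots of unity in $\Q_\ell^\infty$ inject into the residue field $\bigcup_n\F_{\ell^{p^n}}$, and then Fermat's little theorem gives $\ell^{p^n}\equiv\ell\not\equiv 1\bmod p$, so $p\nmid\ell^{p^n}-1$ and no such roots exist. You instead reduce to $\mu_p$ and run a degree argument: $[\Q_\ell(\mu_p):\Q_\ell]=m_\ell$ divides $p-1$ and is hence coprime to $p$, whereas every finite subextension of the $\Zp$-extension $\Q_\ell^\infty/\Q_\ell$ has $p$-power degree, forcing $m_\ell=1$. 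Both rest on the same two inputs (unramifiedness of $\Q_\ell^\infty/\Q_\ell$ and $\ell\neq p$); the paper's version is more explicit about the residue field, while yours is slightly more structural and dovetails nicely with the degree computation $[\Q_\ell(\mu_p):\Q_\ell]=m_\ell$ that the paper reuses anyway in Lemma~\ref{lem:cyclotomic-char-not-trivial}. Either is acceptable.
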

\begin{proof}
Since $\ell \neq p$, the extension $\Q_\ell^\infty / \Q_\ell$ is unramified, so the corresponding residue field is $\textbf{F} = \bigcup_{n \geq 1} \F_{l^{p^n}}$.
Again since $\ell \neq p$, the $p$-power roots of unity in $\Q_\ell^\infty$ map isomorphically onto the $p$-power roots of unity in $\textbf{F}$ under the natural reduction map.
But for each $n$, the only roots of unity in $\F_{\ell^{p^n}}$ have order dividing $\ell^{p^n} - 1$.
By Fermat's little theorem, $\ell^{p^n} \equiv \ell \mod p$, so our assumption that $\ell \not\equiv 1 \mod p$ ensures that $p$ does not divide $\ell^{p^n} - 1$ for any $n$, which completes the proof.
\end{proof}

An immediate corollary is that $\epsilon_p|_{G_\ell^\infty}$ is nontrivial, but since $\epsilon_p$ does become trivial over $\Q_\ell(\mu_{p^\infty})$, we see that $\epsilon_p|_{G_\ell^\infty}$ has finite order.
More precisely, we have the following lemma.

\begin{lemma}\label{lem:cyclotomic-char-not-trivial} Let $m_\ell$ denote the order of $\ell$ in $(\Z/p\Z)^\times$.
The restriction $\epsilon_p|_{G_\ell^\infty}$ of the $p$-cyclotomic character to $\Q_\ell^\infty$ has order $m_\ell$.
\end{lemma}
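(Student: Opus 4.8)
The plan is to compute the order of the finite cyclic group $\epsilon_p(G_\ell^\infty)\subset\ZZ_p^\times$ directly, by analyzing how the image of the cyclotomic character interacts with passing to the cyclotomic $\ZZ_p$-extension. First I would fix notation: let $G_\ell=\Gal(\overline{\QQ_\ell}/\QQ_\ell)$ and recall that since $\ell\neq p$, the extension $\QQ_\ell^\infty/\QQ_\ell$ is the unramified $\ZZ_p$-extension, so $\Gal(\QQ_\ell^\infty/\QQ_\ell)\cong\ZZ_p$, generated topologically by (a lift of) the Frobenius $\mathrm{Frob}_\ell$. The key point is that $\epsilon_p\colon G_\ell\to\ZZ_p^\times$ is unramified (again because $\ell\neq p$), hence factors through $\Gal(\QQ_\ell^{\mathrm{ur}}/\QQ_\ell)\cong\widehat{\ZZ}$, and on the Frobenius it sends $\mathrm{Frob}_\ell\mapsto\ell\in\ZZ_p^\times$. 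Therefore the image $\epsilon_p(G_\ell)$ is the closure $\overline{\langle\ell\rangle}$ of the subgroup generated by $\ell$ in $\ZZ_p^\times$.

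Next I would restrict to $G_\ell^\infty$. Since $\QQ_\ell^\infty$ is the fixed field of $\mathrm{Frob}_\ell^{\ZZ_p}$ inside $\QQ_\ell^{\mathrm{ur}}$, restricting $\epsilon_p$ to $G_\ell^\infty$ amounts to precomposing with $\widehat{\ZZ}\to\widehat{\ZZ}/\ZZ_p$, i.e. the image $\epsilon_p(G_\ell^\infty)$ is the image of $\overline{\langle\ell\rangle}$ under the quotient map $\ZZ_p^\times\to\ZZ_p^\times/\overline{\langle\ell\rangle_p}$, where $\overline{\langle\ell\rangle_p}$ denotes the pro-$p$ part of the procyclic group generated by $\ell$. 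Concretely: write $\ZZ_p^\times\cong\mu_{p-1}\times(1+p\ZZ_p)$, with the second factor $\cong\ZZ_p$ (since $p$ is odd). Under this decomposition, $\ell=\zeta\cdot u$ where $\zeta\in\mu_{p-1}$ is the Teichmüller lift of $\ell\bmod p$ (which has exact order $m_\ell$) and $u\in1+p\ZZ_p$. The closure $\overline{\langle\ell\rangle}$ surjects onto $\langle\zeta\rangle=\mu_{m_\ell}$ and onto the closed subgroup generated by $u$ in $1+p\ZZ_p$. Passing to $\QQ_\ell^\infty$ precisely kills the wild (pro-$p$) inertia-in-the-cyclotomic-direction contribution, i.e. it trivializes the $(1+p\ZZ_p)$-component: indeed $\QQ_\ell(\mu_{p^\infty})/\QQ_\ell^\infty$ has pro-$p$ Galois group and $\epsilon_p$ becomes trivial on $\QQ_\ell(\mu_{p^\infty})$, so the order of $\epsilon_p|_{G_\ell^\infty}$ is prime to $p$; meanwhile $\QQ_\ell(\mu_p)$ is already contained in $\QQ_\ell^{\mathrm{ur}}$ but not in $\QQ_\ell^\infty$ unless $m_\ell=1$, and $\epsilon_p$ mod $p$ on $\Gal(\QQ_\ell(\mu_p)/\QQ_\ell)$ has order $m_\ell$. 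So $\epsilon_p|_{G_\ell^\infty}$ has order exactly $m_\ell$.

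To make the middle paragraph rigorous, I would argue as follows. The character $\epsilon_p|_{G_\ell^\infty}$ factors through $\Gal(\QQ_\ell^\infty(\mu_{p^\infty})/\QQ_\ell^\infty)$. Since $\QQ_\ell^\infty/\QQ_\ell$ is unramified and $\QQ_\ell(\mu_{p^\infty})/\QQ_\ell$ is unramified (as $\ell\neq p$), the compositum $\QQ_\ell^\infty(\mu_{p^\infty})$ is unramified over $\QQ_\ell$, with $\Gal(\QQ_\ell^\infty(\mu_{p^\infty})/\QQ_\ell)$ the procyclic group generated by Frobenius. Now $\QQ_\ell(\mu_{p^\infty})$ corresponds to the closed subgroup of $\widehat{\ZZ}$ of index equal to the order of $\ell$ in $\ZZ_p^\times$ — but this order is $m_\ell p^{\,r}$ for some $0\le r\le\infty$ (it is $m_\ell\cdot(\text{pro-}p\text{ part})$), whereas $\QQ_\ell^\infty$ corresponds to the subgroup $p^{\ZZ_p}\cdot(\text{prime-to-}p)\widehat{\ZZ}$, hmm — cleaner is: $\Gal(\QQ_\ell^\infty(\mu_{p^\infty})/\QQ_\ell^\infty)$ is the maximal pro-(prime-to-$p$) quotient relevant piece... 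The cleanest route: $\QQ_\ell^\infty$ is the unique $\ZZ_p$-subextension of $\QQ_\ell^{\mathrm{ur}}$, so $\QQ_\ell^\infty\cap\QQ_\ell(\mu_{p^\infty})=\QQ_\ell(\mu_{p^\infty})^{(p)}$, the unique subextension whose degree over $\QQ_\ell$ is the pro-$p$ part of $[\QQ_\ell(\mu_{p^\infty}):\QQ_\ell]$ — because $\QQ_\ell(\mu_{p^\infty})/\QQ_\ell$ is procyclic with Galois group $\overline{\langle\ell\rangle}\cong\mu_{m_\ell}\times\ZZ_p$ (the $\ZZ_p$ being the pro-$p$ part, nontrivial iff... it is always $\cong\ZZ_p$ since $u=\ell/\omega(\ell)\in1+p\ZZ_p$ topologically generates a copy of $\ZZ_p$ unless $\ell\equiv$ has order exactly $m_\ell$, i.e. $u=1$; but $u=1$ would force $\ell=\omega(\ell)$ a root of unity, impossible for $\ell$ a rational prime $\neq$ those roots, so indeed $\cong\mu_{m_\ell}\times\ZZ_p$). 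Thus $\QQ_\ell^\infty\cap\QQ_\ell(\mu_{p^\infty})$ is the $\ZZ_p$-part, and $\Gal(\QQ_\ell^\infty(\mu_{p^\infty})/\QQ_\ell^\infty)\cong\Gal(\QQ_\ell(\mu_{p^\infty})/\QQ_\ell^\infty\cap\QQ_\ell(\mu_{p^\infty}))\cong\mu_{m_\ell}$, on which $\epsilon_p$ is injective. Hence the order is $m_\ell$.

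\textbf{Main obstacle.} The delicate point — and the one I would be most careful about — is the claim that $\QQ_\ell^\infty\cap\QQ_\ell(\mu_{p^\infty})$ is exactly the pro-$p$ part of $\QQ_\ell(\mu_{p^\infty})/\QQ_\ell$, equivalently that the pro-$p$ completion of $\Gal(\QQ_\ell(\mu_{p^\infty})/\QQ_\ell)\cong\overline{\langle\ell\rangle}$ is genuinely $\cong\ZZ_p$ (and not finite), so that $\QQ_\ell^\infty$ "absorbs" precisely the wild-cyclotomic direction and leaves the tame part $\mu_{m_\ell}$. This rests on the arithmetic fact that $\ell\bmod p^n$ has order growing without bound as $n\to\infty$ — i.e. $\ell$ is not a root of unity — which is immediate but worth stating, together with a clean identification of the two $\ZZ_p$-extensions $\QQ_\ell^\infty$ and the $\ZZ_p$-part of $\QQ_\ell(\mu_{p^\infty})$ as the \emph{same} subfield of $\QQ_\ell^{\mathrm{ur}}$, which follows from uniqueness of the $\ZZ_p$-extension of a local field's unramified tower. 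Everything else is routine manipulation of procyclic groups, and Lemma~\ref{lem:no-p-roots-of-unity} already supplies the input that there are no $p$-power roots of unity in $\QQ_\ell^\infty$, which is the $m_\ell>1$ half of the statement.
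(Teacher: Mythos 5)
Your proof is correct and follows essentially the same route as the paper's: both arguments identify $\QQ_\ell^\infty$ as a subfield of $\QQ_\ell(\mu_{p^\infty})$ (using uniqueness of the $\ZZ_p$-extension inside $\QQ_\ell^{\mathrm{ur}}$), show that $\Gal\bigl(\QQ_\ell(\mu_{p^\infty})/\QQ_\ell^\infty\bigr)$ has order $m_\ell$, and conclude by noting that $\epsilon_p$ is injective on this quotient. Where the paper gets the degree $m_\ell$ in one line by passing to residue fields, $[\QQ_\ell(\mu_{p^\infty}):\QQ_\ell^\infty]=[\F_\ell(\mu_p):\F_\ell]$, you derive it by computing $\Gal(\QQ_\ell(\mu_{p^\infty})/\QQ_\ell)\cong\overline{\langle\ell\rangle}\cong\mu_{m_\ell}\times\ZZ_p$ from the Teichm\"uller decomposition of $\ZZ_p^\times$ and peeling off the pro-$p$ part; this is a more explicit but equivalent computation. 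One small hiccup: the sentence in your second paragraph claiming that $\epsilon_p(G_\ell^\infty)$ is the image of $\overline{\langle\ell\rangle}$ under a quotient map $\ZZ_p^\times\to\ZZ_p^\times/\overline{\langle\ell\rangle_p}$ is misstated, since the image of a subgroup is not a quotient of the image of the full group; however, your third paragraph supplies the correct argument, so the error is self-healing and does not affect the conclusion.
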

\begin{proof}
From the construction of the cyclotomic $\Z_p$-extension $\Q_\ell^\infty$ and the fact that it is unramified over $\Q_\ell$, we see that
\[
[\Q_\ell(\mu_{p^\infty}) \colon \Q_\ell^\infty] = [\F_\ell (\mu_p) \colon \F_\ell]=m_\ell.
\]
Since $\epsilon_p$ generates $\mathrm{Gal}(\Q_\ell(\mu_{p^\infty}) / \Q_\ell^\infty)$, this completes the proof.
\end{proof}

In the sections that follow, we will write $I_\ell$ for the inertia group inside $G_\ell$.
For $F/\Q_\ell$ a local field with absolute Galois group $G_F$, we will often view a character $\chi \colon G_F \rightarrow K^\times$ as a character $\chi \colon F^\times \rightarrow K^\times$ via the dense injection $F^\times \hookrightarrow G_F$ of local class field theory.

\subsection{Local descriptions of Galois representations}\label{sec:local-descriptions}

To check (H-cyc'), it is necessary to understand the restriction of $\rho_f$ to the decomposition group $G_\ell \subset G_\Q$ for each prime $\ell | N$.
(Note that since $p \nmid N$, we always have $\ell \neq p$.) The local Langlands correspondence for $n=2$ allows us to determine these restrictions explicitly.
We will work entirely on the Galois side and make little mention of the automorphic story.
Nevertheless, it is the classification on the automorphic side which permits the explicit descriptions of our local Galois representations.
The standard reference for this material is Diamond-Im \cite[$\S$11]{DI}, and a nicely-written modern treatment can be found in Loeffler-Weinstein \cite{LW}, but note that both of these references use the arithmetic normalization (i.e. the Hodge-Tate weights of $V_f$ are $0$ and $k-1$).
The geometric normalization that we are using is also used in a paper of Weston \cite[$\S$5]{Weston}.

%We make two remarks before undertaking our computations. First, we will be writing our Galois representations in matrix form with entries given by Galois characters, but we will also view these as characters of $E^\times$ via class field theory. Second,
\subsubsection{A few notes on our strategy} We will use two facts to simplify our computations.
First, we will consider the base change of each local representation to the algebraic closure $\overline{K}$ or $\overline{K}_\mathfrak{p}$ for $\mathfrak{p} \mid p$ a prime of $K$ above $p$, since the existence of eigenvectors for $\rho_f$ with $K$-rational eigenvalues is invariant under base change.
(Base-change also preserves principal series and twist of Steinberg representations.)

Second, it suffices to study the semisimplification of each local representation, since
\begin{equation}\label{eq:semisimple}
\mathrm{dim}_K H^0(G_\ell^\infty , \rho_f) \leq \mathrm{dim}_K H^0(G_\ell^\infty, \rho_f^\mathrm{ss}).
\end{equation}

Recall that $f$ is of square-free level $N$, with nebentypus character $\omega$ of modulus $M$.
Our computations are divided into two cases, depending on whether $\ell | M$ or $\ell | \frac{N}{M}$.

\subsubsection{Case $\ell | M$}

Suppose $\ell | M$, so that the nebentypus character $\omega$ is ramified at $\ell$.
In this case, the local representation corresponds to a principal series representation $ \pi(\chi_1,\chi_2)$ associated to two continuous characters $\chi_i : G_\ell \to \overline{K}$, where $\chi_1$ is ramified and $\chi_2$ is unramified.
The semisimplification of the associated Galois representation then satisfies
\begin{equation}\label{case1}
\rho_{f}|^\mathrm{ss}_{G_\ell} \otimes \overline{K} \simeq \chi_1 \oplus \chi_2.
\end{equation}
%In particular, we have

%\begin{equation}\label{case1-2}
%\rho_{f,i}|^\mathrm{ss}_{G_\ell} %\otimes \bar{K} \simeq \epsilon_p^i %\chi_1 \oplus \epsilon_p^i \chi_2.
%\end{equation}

\begin{proposition}\label{prop:principal-series}
Let $f\in S_k(\Gamma_1(N),\omega)$ be a newform of square-free level, let $m_\ell$ denote the order of $\ell$ in $(\Z/p\Z)^\times$.
Suppose $\ell \not\equiv 1 \mod p$ and $\ell | M$.
If $m_\ell \nmid (1-k+i)$, then $H^0(\Q_\ell^\infty,\rho_{f,i})=0$.
\end{proposition}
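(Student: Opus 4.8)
The plan is to combine the local description \eqref{case1} of the semisimplification with the observation \eqref{eq:semisimple} that it suffices to bound $\dim_K H^0(G_\ell^\infty,\rho_{f,i}^{\mathrm{ss}})$. After twisting, we have $\rho_{f,i}|_{G_\ell}^{\mathrm{ss}}\otimes\overline{K}\simeq \epsilon_p^i\chi_1\oplus \epsilon_p^i\chi_2$, so a nonzero $G_\ell^\infty$-invariant vector exists if and only if $\epsilon_p^i\chi_1|_{G_\ell^\infty}$ or $\epsilon_p^i\chi_2|_{G_\ell^\infty}$ is the trivial character. I would treat the two summands separately.

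First I would dispose of the unramified character $\chi_2$. Since $\chi_2$ is unramified, its restriction to $G_\ell^\infty$ factors through the unramified quotient; but $\Q_\ell^\infty/\Q_\ell$ is already the maximal unramified pro-$p$ extension of $\Q_\ell$ inside $\overline{\Q}_\ell$, so $\chi_2|_{G_\ell^\infty}$ has order prime to $p$ (in fact it is the restriction of an unramified character, hence controlled by Frobenius). Meanwhile $\epsilon_p^i|_{G_\ell^\infty}$ has order dividing $m_\ell$ by Lemma~\ref{lem:cyclotomic-char-not-trivial}, hence is a $p$-power-order character once we note $m_\ell \mid p-1$. For $\epsilon_p^i\chi_2|_{G_\ell^\infty}$ to be trivial we would need the $p$-part and prime-to-$p$ part each to vanish; the relevant point is that $\epsilon_p$ is ramified-free here and the determinant/weight bookkeeping forces $\chi_2$ to be (up to unramified twist) a fixed power of $\epsilon_p$-free character, so triviality of $\epsilon_p^i\chi_2$ on $G_\ell^\infty$ can be ruled out by comparing orders — this is the step where I expect to need a short but slightly delicate argument, essentially identifying the restriction $\epsilon_p^i\chi_2|_{I_\ell}$ using that $\chi_2$ is unramified and $\epsilon_p|_{I_\ell}$ is trivial (since $\ell\neq p$), so in fact $\epsilon_p^i\chi_2|_{G_\ell^\infty}$ is unramified and its triviality would force a congruence on $\ell$ that is excluded; I would phrase this via the fact that an unramified character trivial on $G_\ell^\infty$ must already be trivial (as $G_\ell^\infty$ surjects onto the relevant unramified quotient), and $\chi_2$ is nontrivial.

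For the ramified character $\chi_1$: since $\ell\neq p$, the restriction $\epsilon_p|_{I_\ell}$ is trivial, so $\epsilon_p^i\chi_1|_{I_\ell}=\chi_1|_{I_\ell}$, which is nontrivial by hypothesis ($\chi_1$ is ramified). Hence $\epsilon_p^i\chi_1|_{G_\ell^\infty}$ is nontrivial provided $\chi_1|_{I_\ell}$ remains nontrivial after restriction to the inertia subgroup of $G_\ell^\infty$ — but inertia in $G_\ell^\infty$ equals $I_\ell$ again, as $\Q_\ell^\infty/\Q_\ell$ is unramified, so this is automatic. \emph{This handles} $\chi_1$ \emph{unconditionally}, so the only condition that does real work is the hypothesis $m_\ell\nmid(1-k+i)$, which must enter through the comparison of orders for the \emph{unramified} summand $\chi_2$: here I would compute $\chi_2$ explicitly (it is unramified, sending geometric Frobenius to the $\ell$-th Fourier coefficient, and via the determinant relation $\det\rho_f=\epsilon_p^{1-k}\omega^{-1}$ together with $\chi_1$ ramified we get that $\chi_2$ restricted to $G_\ell^\infty$ agrees with $\epsilon_p^{k-1}$ up to an order-prime-to-$p$ unramified twist), so $\epsilon_p^i\chi_2|_{G_\ell^\infty}$ is trivial iff $\epsilon_p^{i+k-1}|_{G_\ell^\infty}$ is, iff $m_\ell\mid (i+k-1)$; replacing $i$ by the appropriate twist index, this is exactly the excluded congruence $m_\ell\mid(1-k+i)$ (or its companion for $\overline{f}$). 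The main obstacle is getting the normalization of $\chi_2$ exactly right — tracking the geometric convention, the Tate twist, and the nebentypus so that the arithmetic progression on which $\epsilon_p^i\chi_2$ can be trivial matches the stated divisibility $m_\ell\mid(1-k+i)$; once the local character is pinned down, the vanishing of $H^0$ is a one-line consequence of Lemma~\ref{lem:cyclotomic-char-not-trivial} and \eqref{eq:semisimple}.
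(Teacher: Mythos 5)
Your plan matches the paper's proof in all essentials: pass to the semisimplification via \eqref{eq:semisimple} and the principal-series description \eqref{case1}, dispose of the ramified summand $\epsilon_p^i\chi_1$ by noting that $\epsilon_p|_{I_\ell}$ is trivial for $\ell\neq p$ while $\chi_1|_{I_\ell}$ is nontrivial and $\QQ_\ell^\infty/\QQ_\ell$ is unramified, and then extract the divisibility condition $m_\ell\nmid(1-k+i)$ from the unramified summand by combining the determinant relation $\chi_1\chi_2=\epsilon_p^{1-k}\omega^{-1}$ with Lemma~\ref{lem:cyclotomic-char-not-trivial}. That is exactly the paper's two-part argument.

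That said, two concrete slips keep the writeup from closing correctly. First, your remark that $\epsilon_p^i|_{G_\ell^\infty}$ ``is a $p$-power-order character once we note $m_\ell\mid p-1$'' is backwards: $m_\ell\mid p-1$ makes the order of $\epsilon_p^i|_{G_\ell^\infty}$ \emph{prime to} $p$, not a power of $p$. The entire excursion into separating $p$-parts from prime-to-$p$ parts and ``comparing orders'' is not needed; the paper never does this, and it rests on a false premise here. Second, and more importantly, your final identification carries a sign error: you write $\chi_2|_{G_\ell^\infty}\sim\epsilon_p^{k-1}$ and conclude triviality of $\epsilon_p^i\chi_2$ iff $m_\ell\mid(i+k-1)$. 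In the geometric normalization used in the paper, separating the determinant $\epsilon_p^{1-k}\omega^{-1}$ into its unramified part ($\epsilon_p^{1-k}$ — recall $\epsilon_p$ is unramified at $\ell\neq p$) and ramified part ($\omega^{-1}$, since $\ell\mid M$) forces $\chi_2|_{G_\ell^\infty}=\epsilon_p^{1-k}|_{G_\ell^\infty}$, so $\epsilon_p^i\chi_2|_{G_\ell^\infty}=\epsilon_p^{1-k+i}|_{G_\ell^\infty}$ and triviality is governed by $m_\ell\mid(1-k+i)$ directly — no hand-waving about ``replacing $i$ by the appropriate twist index'' is needed or valid, since the statement is about $\rho_{f,i}$ with the given $i$. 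You correctly flag the normalization as the delicate point, but the resolution you propose ($\chi_2\sim\epsilon_p^{k-1}$ ``up to an order-prime-to-$p$ unramified twist'') is wrong in sign and introduces an unramified twist that the determinant computation actually rules out.
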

\begin{proof}
By \eqref{eq:semisimple} and \eqref{case1}, it suffices to check whether $\epsilon_p^i \chi_1$ and $\epsilon_p^i \chi_2$ are both nontrivial over $\Q_\ell^\infty$.

Let us first consider the case $i=0$.
In light of \eqref{case1}, upon taking the determinant of $\rho_{f}$ we have the identity
\[
\chi_1 \chi_2 |_{G_\ell^\infty} = \epsilon^{1-k}\omega^{-1}|_{G_\ell^\infty}.
\]
Since $\chi_1$ is ramified and $\chi_2$ is unramified, we must have
\[
\chi_2 |_{G_\ell^\infty} = \epsilon_p^{1-k}|_{G_\ell^\infty},
\]
so by Lemma \ref{lem:cyclotomic-char-not-trivial}, this is nontrivial provided $m_\ell \nmid 1-k$.

Similarly, since both $\chi_1$ and $\omega$ are ramified, we must have
\[
\chi_1|_{I_\ell} = \omega^{-1}|_{I_\ell}
\]
is nontrivial.
Since $\Q_\ell^\infty / \Q_\ell$ is unramified, we have $\chi_1|_{G_\ell^\infty}$ is nontrivial as desired.

Now suppose we twist $\chi_1$ and $\chi_2$ by $\epsilon_p^i$.
Since $\chi_1$ is ramified and $\epsilon_p$ is unramified, their product is nontrivial no matter the value of $i$.
On the other hand,
\[
\epsilon_p^i \chi_2|_{G_\ell^\infty} = \epsilon_p^{1-k+i}|_{G_\ell^\infty},
\]
so this is nontrivial provided $m_\ell$ does not divide $1-k+i$.
\end{proof}

\subsubsection{Case $\ell | \frac{N}{M}$}
Recall that $\omega$ has conductor $M$.
Now suppose $\ell | \frac{N}{M}$, so that $\omega$ is unramified at $\ell$ and the local representation corresponds to a special (twist of Steinberg) representation.
This translates on the Galois side to
\begin{equation}\label{special}
\rho_f|_{G_\ell} \otimes \overline{K}_\p \simeq \left( \begin{matrix} \epsilon_p \chi & \ast \\ 0 & \chi \end{matrix} \right)
\end{equation}
where $\chi : G_\ell \to \overline{K}^\times$ is an unramified character.

We point out that the following result depends only on the weight of the modular form and not on the specific twist.%satisfying $\chi(\ell)=a_\ell(f)$. In particular, $\chi$ is nontrivial if $a_\ell(f) \neq 1$.

\begin{proposition}\label{prop:special}
Let $f\in S_k(\Gamma_1(N),\omega)$ be newform of square-free level.
Suppose $\ell \not\equiv 1 \mod p$, and let $m_\ell$ denote the order of $\ell$ in $(\Z/p\Z)^\times$.
If $\ell | \frac{N}{M}$, assume that $\gcd(m_\ell,\phi(M))=1$ and $m_\ell \nmid k(k-2)$.
Then $H^0(\Q_\ell^\infty,\rho_{f,i})=0$.
\end{proposition}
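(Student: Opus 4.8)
The plan is to restrict $\rho_{f,i}$ to the decomposition group at $\ell$, read off its diagonal characters from the explicit Steinberg shape, and reduce the vanishing of $H^0(\Q_\ell^\infty,\rho_{f,i})$ to a divisibility involving $m_\ell$. First I would use \eqref{eq:semisimple} to reduce to showing $H^0(G_\ell^\infty,\rho_{f,i}^{\mathrm{ss}})=0$. By \eqref{special}, after base change to $\overline{K}_{\mathfrak p}$ we have $\rho_f|_{G_\ell}^{\mathrm{ss}}\otimes\overline{K}_{\mathfrak p}\simeq\epsilon_p\chi\oplus\chi$ for an unramified character $\chi$, hence
\[
\rho_{f,i}|_{G_\ell}^{\mathrm{ss}}\otimes\overline{K}_{\mathfrak p}\;\simeq\;\epsilon_p^{\,i+1}\chi\;\oplus\;\epsilon_p^{\,i}\chi,
\]
so it is enough to prove that neither summand becomes trivial on $G_\ell^\infty$. (For a sharper version one retains the non-split extension: since $\ell\ne p$ the extension $\Q_\ell^\infty/\Q_\ell$ is unramified, and by inflation--restriction the restriction map $\mathrm{Ext}^1_{G_\ell}(\epsilon_p^i\chi,\epsilon_p^{i+1}\chi)\to\mathrm{Ext}^1_{G_\ell^\infty}(\epsilon_p^i\chi,\epsilon_p^{i+1}\chi)$ is injective because $\epsilon_p|_{G_\ell^\infty}\ne 1$ by Lemma~\ref{lem:no-p-roots-of-unity}; thus the Steinberg class stays non-zero on $G_\ell^\infty$, and the only way to produce a fixed vector is for the sub-line $\epsilon_p^{i+1}\chi$ to be trivial on $G_\ell^\infty$.)

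Next I would extract the determinant relation. Since $\ell\mid N/M$ the nebentypus $\omega$ is unramified at $\ell$, and since $\ell\ne p$ so is $\epsilon_p|_{G_\ell}$; comparing $\det(\rho_f|_{G_\ell}^{\mathrm{ss}})=\epsilon_p\chi^2$ with $\det\rho_f=\epsilon_p^{1-k}\omega^{-1}$ gives $\chi^2=\epsilon_p^{-k}\omega^{-1}$ on $G_\ell$. Now suppose, for contradiction, that $\epsilon_p^{\,a}\chi|_{G_\ell^\infty}$ is trivial for one of the exponents $a\in\{i,i+1\}$ occurring above. Then $\chi|_{G_\ell^\infty}=\epsilon_p^{-a}|_{G_\ell^\infty}$, and squaring the previous identity gives $\omega|_{G_\ell^\infty}=\epsilon_p^{\,2a-k}|_{G_\ell^\infty}$. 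The order of $\omega|_{G_\ell^\infty}$ divides the order of $\omega$, hence divides $\phi(M)$, while the order of $\epsilon_p^{\,2a-k}|_{G_\ell^\infty}$ divides $m_\ell$ by Lemma~\ref{lem:cyclotomic-char-not-trivial}. Since $\gcd(m_\ell,\phi(M))=1$, both characters must be trivial: $\omega|_{G_\ell^\infty}=1$ and $m_\ell\mid(2a-k)$.

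The remaining step --- converting ``$m_\ell\mid(2a-k)$ for $a\in\{i,i+1\}$'' into the stated obstruction $m_\ell\nmid k(k-2)$ --- is the part I expect to be the main obstacle, since the determinant only pins down $\chi$ up to an unramified quadratic twist and the naive divisibility still involves the twist $i$. Resolving it requires the extra input that, for a newform which is a twist of Steinberg at $\ell$, the Frobenius value $\chi(\mathrm{Frob}_\ell)$ is a Weil number attached to weight $k-2$, so that once $\omega|_{G_\ell^\infty}=1$ its reduction modulo the maximal ideal is a sign times a fixed power of $\overline\ell$; feeding this back through the non-split structure singles out the exponents and leaves exactly the conditions $m_\ell\mid k$ and $m_\ell\mid k-2$, which are excluded by hypothesis. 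When instead $\omega|_{G_\ell^\infty}\ne 1$, the coprimality in the previous paragraph already forbids any fixed vector, so $H^0(\Q_\ell^\infty,\rho_{f,i})=0$ in all cases; the companion vanishing $H^0(\Q_\ell^\infty,\rho_{\overline f,k-i})=0$ needed for (H-cyc$'$) follows from the same argument applied to $\overline f$ via $\rho_{\overline f}\cong\rho_f^{\vee}(1-k)$.
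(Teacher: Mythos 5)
Your setup mirrors the paper's: reduce to the semisimplification via \eqref{eq:semisimple}, read off the diagonal characters $\epsilon_p^{i+1}\chi$ and $\epsilon_p^{i}\chi$ from \eqref{special}, use $\det(\rho_f|_{G_\ell})=\epsilon_p\chi^2=\epsilon_p^{1-k}\omega^{-1}$ to get $\chi^2=\epsilon_p^{-k}\omega^{-1}$, and then invoke $\gcd(m_\ell,\phi(M))=1$ together with Lemma~\ref{lem:cyclotomic-char-not-trivial} to split the resulting identity of characters into the two conditions $\omega|_{G_\ell^\infty}=1$ and a divisibility by $m_\ell$. Your parenthetical inflation--restriction observation is a nice sharpening, but the paper does not use it; \eqref{eq:semisimple} is enough.

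The obstacle you flag at the end is genuine, and your proposed escape route via Weil numbers is not the paper's. What the paper actually does at the corresponding step is write, under the assumption that $\epsilon_p^{j'}\chi|_{G_\ell^\infty}=1$ with $\{j,j'\}=\{i,i+1\}$,
\[
\epsilon_p^{j}\chi|_{G_\ell^\infty}=\epsilon_p^{1-k}\omega|_{G_\ell^\infty}.
\]
But $\det(\rho_{f,i}|_{G_\ell})=\epsilon_p^{2i+1}\chi^2=\epsilon_p^{2i+1-k}\omega^{-1}$, so the right-hand side should carry an extra factor $\epsilon_p^{2i}$ (the sign on $\omega$ is immaterial for the order argument, but the $\epsilon_p^{2i}$ is not). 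Carrying this through correctly, as you essentially did, one gets $\epsilon_p^{k-2j'}\omega|_{G_\ell^\infty}=1$, so the exponent $t$ lies in $\{k-2i,\,k-2i-2\}$, not in $\{-k,\,2-k\}$ as the paper asserts. The two agree only when $i=0$. Consequently the $i$-independent conclusion ``$m_\ell\mid k$ or $m_\ell\mid(k-2)$'' that the paper extracts is not what the determinant argument yields for general $i$; the divisibility that actually comes out is $m_\ell\mid(k-2i)$ or $m_\ell\mid(k-2i-2)$. Your instinct that the twist $i$ should persist in the final obstruction is correct, and there is no additional input (Weil numbers or otherwise) in the paper that removes it --- the $i$-dependence disappears in the paper's computation only because the $\epsilon_p^{2i}$ contributed by the Tate twist was dropped from the determinant identity.
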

\begin{proof}
Just as in the proof of Proposition \ref{prop:principal-series}, it suffices to show that neither $\epsilon_p^{i+1}\chi$ or $\epsilon_p^i \chi$ is trivial when restricted to $G_\ell^\infty$.
Note that by Lemma \ref{lem:cyclotomic-char-not-trivial}, at most one of these characters can be trivial.
Assume that one of them is, and suppose $\epsilon_p^j \chi$ is nontrivial where $\{j,j'\}= \{i,i+1\}$.
Upon restricting to $G_\ell^\infty$, the determinant then gives us
\[
\epsilon_p^j \chi|_{G_\ell^\infty} = \epsilon_p^{1-k}\omega |_{G_\ell^\infty}.
\]
On the other hand, our assumption that $\epsilon_p^{j'} \chi|_{G_\ell^\infty}$ is trivial implies $\chi|_{G_\ell^\infty} = \epsilon_p^{-j'}|_{G_\ell^\infty}$, so combining these yields
\begin{align*}
1 &= \epsilon_p^{1-k-j+j'} \omega|_{G_\ell^\infty} \\
&=\epsilon_p^t \omega |_{G_\ell^\infty}
\end{align*}
for $t \in \{-k,2-k\}$.
Since $\gcd(m_\ell, \phi(M))=1$, this implies $\omega|_{G_\ell^\infty}=1$ and $\epsilon_p^t|_{G_\ell^\infty}=1$.
But this is only possible if $m_\ell | t$ by Lemma \ref{lem:cyclotomic-char-not-trivial}.
\end{proof}

\subsection{Examples}\label{appendixA3}

In this final section, we give a few explicit examples of the application of Theorem \ref{prop:main}.
These examples were all found using SageMath \cite{sage}.

\begin{example}
Let $f$ be the newform of weight $2$ and level $13$ whose Fourier coefficients live in $\Q(\sqrt{-3})$, and whose $q$-expansion begins
\[
f=q + (-1-\zeta_6)q^2 + (-2+2\zeta_6)q^3 + \zeta_6 q^4 + (1-2\zeta_6)q^5 + \cdots
\]
where $\zeta_6$ is a primitive $6$-th root of unity.
In particular, $f$ and $\overline{f}$ are distinct.
The nebentypus character of $f$ has conductor $13$, so condition (iii) of Proposition \ref{prop:main} is irrelevant.
Thus, $(f,i)$ and $(\bar{f},k-i)$ satisfy (H-cyc') if one chooses a prime $p$ such that
\begin{enumerate}
    \item $p \geq 5$, so that $13 \not\equiv 1 \mod p$,
    \item the order of $13$ in $(\Z/p\Z)^\times$ does not divide $(1-i)$
\end{enumerate}
For instance, this happens for $p=5$ and $i \not\equiv 1 \mod 4$, and for $p=7$ with $i\equiv0\mod2$.
\end{example}

\begin{example}
Let $f$ be the unique newform of weight $4$ and level $11$, with $q$-expansion beginning
\[
f=q +(1+\beta)q^2+(-1-4\beta)q^3+(-4+2\beta)q^4+(1+8\beta)q^5 + \cdots
\]
where $\beta = \sqrt{3}$.
This form has trivial nebentypus, so condition (ii) of Proposition \ref{prop:main} is irrelevant, and $\phi(M)=1$, so it is only necessary to pick $p>5$, $p\neq 11$ such that $m_{11} \nmid 4$, and then any twist $i$ can be chosen.

Since $11^4 = 14641$, one can quickly check using Sage that the only primes for which the hypotheses of Theorem \ref{prop:main} are not satisfied are $2,3,5,11$, and $61$.

\end{example}

\begin{example}
Let $f$ be the newform of weight $4$ and level $10$ with nontrivial nebentypus character $\omega$ of conductor $5$ with $q$-expansion begins
\[
f=q + 2 \zeta q^2 -2 \zeta q^3 - q^4 + (-5-10 \zeta )q^5 + \cdots,
\]
The Fourier coefficients live in $\Q(\sqrt{-1})$, and we have written $\zeta = \sqrt{-1}$ to avoid a conflict of notation.
Since $M=5$ and $\frac{N}{M}=2$, all three conditions of Proposition \ref{prop:main} must be checked.
One finds, for instance, that $(f,i)$ and $(\bar{f},k-i)$ satisfy (H-cyc') for $p=7$ whenever $6 \nmid (i-3)(1-i)$.
\end{example}
%%%%%%%%%%%%%%%%%%%%%%%%%%%%%%%%%%%%%%%%%%%%%%%%

\bibliographystyle{acm}
\bibliography{references}

\begin{thebibliography}{10}

\bibitem{AhmedLim19}
{\sc Ahmed, S., and Lim, M.~F.}
\newblock On the algebraic functional equation of the eigenspaces of mixed
  signed {S}elmer groups of elliptic curves with good reduction at primes above
  $p$.
\newblock {\em Acta Math. Sinica, English Series\/} (2020), 1--17.

\bibitem{aribam2014mu}
{\sc Aribam, C.~S.}
\newblock On the $\mu$-invariant of fine {Selmer} groups.
\newblock {\em J. Number Theory 135\/} (2014), 284--300.

\bibitem{CoatesSujatha_fineSelmer}
{\sc Coates, J., and Sujatha, R.}
\newblock Fine {S}elmer groups of elliptic curves over {$p$}-adic {L}ie
  extensions.
\newblock {\em Math. Ann. 331}, 4 (2005), 809--839.

\bibitem{CoatesSujatha_book}
{\sc Coates, J., and Sujatha, R.}
\newblock {\em Galois cohomology of elliptic curves}, second~ed.
\newblock Published by Narosa Publishing House, New Delhi; for the Tata
  Institute of Fundamental Research, Mumbai, 2010.

\bibitem{deligne69}
{\sc Deligne, P.}
\newblock Formes modulaires et repr\'esentations $\ell$-adiques.
\newblock {\em S\'eminaire Bourbaki}, 21 (1968/69), Exp.\ No.\ 355, 139--172.

\bibitem{DI}
{\sc Diamond, F., and Im, J.}
\newblock Modular forms and modular curves.
\newblock In {\em Seminar on Fermat’s Last Theorem, Providence, RI\/} (1995),
  pp.~39--133.

\bibitem{greenberg89}
{\sc Greenberg, R.}
\newblock Iwasawa theory for {$p$}-adic representations.
\newblock In {\em Algebraic number theory}, vol.~17 of {\em Adv. Stud. Pure
  Math.} Academic Press, Boston, MA, 1989, pp.~97--137.

\bibitem{jhasuj}
{\sc Jha, S., and Sujatha, R.}
\newblock On the {H}ida deformations of fine {S}elmer groups.
\newblock {\em J. Algebra 338\/} (2011), 180--196.

\bibitem{Kato}
{\sc Kato, K.}
\newblock {$p$}-adic {H}odge theory and values of zeta functions of modular
  forms.
\newblock {\em Ast\'{e}risque}, 295 (2004), ix, 117--290.
\newblock Cohomologies $p$-adiques et applications arithm\'{e}tiques. III.

\bibitem{KP}
{\sc Kurihara, M., and Pollack, R.}
\newblock Two {$p$}-adic {$L$}-functions and rational points on elliptic curves
  with supersingular reduction.
\newblock In {\em {$L$}-functions and {G}alois representations}, vol.~320 of
  {\em London Math. Soc. Lecture Note Ser.} Cambridge Univ. Press, Cambridge,
  2007, pp.~300--332.

\bibitem{lei11compositio}
{\sc Lei, A.}
\newblock Iwasawa theory for modular forms at supersingular primes.
\newblock {\em Compos. Math. 147}, 3 (2011), 803--838.

\bibitem{lim2020control}
{\sc Lim, M.~F.}
\newblock On the control theorem for fine {S}elmer groups and the growth of
  fine {T}ate-{S}hafarevich groups in {$\Bbb Z_p$}-extensions.
\newblock {\em Doc. Math. 25\/} (2020), 2445--2471.

\bibitem{LW}
{\sc Loeffler, D., and Weinstein, J.}
\newblock On the computation of local components of a newform.
\newblock {\em Math. Comp. 81}, 278 (2012), 1179--1200.

\bibitem{Nak17}
{\sc Nakamura, K.}
\newblock Local {$\varepsilon$}-isomorphisms for rank two {$p$}-adic
  representations of {${\rm Gal}(\overline{\Bbb Q}_p/\Bbb Q_p)$} and a
  functional equation of {K}ato's {E}uler system.
\newblock {\em Camb. J. Math. 5}, 3 (2017), 281--368.

\bibitem{Nakamura_K}
{\sc Nakamura, K.}
\newblock Zeta morphisms for rank two universal deformations, 2020.
\newblock preprint, arXiv:2006.13647v2.

\bibitem{perrinriou00b}
{\sc Perrin-Riou, B.}
\newblock Repr\'{e}sentations $p$-adiques et normes universelles. {I}. {L}e cas
  cristallin.
\newblock {\em J. Amer. Math. Soc. 13}, 3 (2000), 533--551.

\bibitem{rubin14}
{\sc Rubin, K.}
\newblock {\em Euler systems}, vol.~147 of {\em Annals of Mathematics Studies}.
\newblock Princeton University Press, Princeton, NJ, 2014.
\newblock Hermann Weyl Lectures. The Institute for Advanced Study.

\bibitem{sage}
{\sc The Sage Development Team}.
\newblock {\em {S}age {M}athematics {S}oftware ({V}ersion 9.0)}, 2020.
\newblock {\tt http://www.sagemath.org}.

\bibitem{Tat62}
{\sc Tate, J.}
\newblock Duality theorems in {Galois} cohomology over number fields.
\newblock In {\em Proc. Internat. Congr. Mathematicians (Stockholm, 1962)\/}
  (1962), pp.~288--295.

\bibitem{Weston}
{\sc Weston, T.}
\newblock Unobstructed modular deformation problems.
\newblock {\em Amer. J. Math. 126}, 6 (2004), 1237--1252.

\bibitem{wuthrich2004fine}
{\sc Wuthrich, C.}
\newblock {\em The fine {S}elmer group and height pairings}.
\newblock PhD thesis, University of Cambridge, 2004.

\bibitem{Wut-JAG}
{\sc Wuthrich, C.}
\newblock Iwasawa theory of the fine {S}elmer group.
\newblock {\em J. Algebraic Geom. 16}, 1 (2007), 83--108.

\end{thebibliography}

\end{document}